\newif\iftikz
\newtheorem{thm}{Theorem}[section]
\newtheorem*{yano}{Yano's Conjecture}
\newtheorem{prop}[thm]{Proposition}
\newtheorem{prop0}{Proposition}
\newtheorem{cor}[thm]{Corollary}
\newtheorem{lema}[thm]{Lemma}
\newtheorem{clm}[thm]{Claim}
\theoremstyle{remark}
\newtheorem{remark}[thm]{Remark}
\theoremstyle{definition}
\newtheorem{dfn}[thm]{Definition}
\newtheorem{ntc}[thm]{Notation}
\newtheorem{ejm}[thm]{Example}
\DeclareMathOperator{\ord}{ord}
\DeclareMathOperator{\spec}{Spec}
\DeclareMathOperator{\Gr}{Gr}
\newcommand\bn{{\mathbb N}}
\newcommand\bz{{\mathbb Z}}
\newcommand\bc{{\mathbb C}}
\newcommand\br{{\mathbb R}}
\newcommand{\cO}{{\mathcal O}}
\newcommand{\cC}{{\mathcal C R_\mu }}
\newcommand{\cA}{{\mathcal E}}
\DeclareMathOperator*{\res}{Res}
\newcommand\enet[1]{\renewcommand\theenumi{#1}
\renewcommand\labelenumi{\theenumi}}
\title[Bernstein polynomial of an irreducible  germ of plane curve]{Bernstein 
polynomial of  2-Puiseux pairs irreducible plane curve singularities
}
\author[E. Artal]{E.~Artal Bartolo${^1}$}
\address{Departamento de Matem\'aticas-IUMA, Universidad de Zaragoza,
c/Pedro Cerbuna 12, 50009 Zaragoza, SPAIN}
\email{artal@unizar.es}
\thanks{${^1}$Partially supported by the grant 
MTM2013-45710-C02-01-P and Grupo Geometr{\'i}a of Gobierno de Arag{\'o}n/Fondo Social Europeo.}
\author[Pi.~Cassou-Nogu\`es]{Pi.~Cassou-Nogu\`es${^2}$}
\address{Institut de Math\'ematiques de Bordeaux, Universit\'e de Bordeaux , 
350, Cours de la Lib\'eration, 33405, Talence Cedex 05, FRANCE}
\email{Pierrette.Cassou-nogues@math.u-bordeaux.fr}
\thanks{${^2}$Partially supported by 
MTM2013-45710-C02-01-P and MTM2013-45710-C02-02-P}
\author[I. Luengo]{I. Luengo${^3}$}
\address{Departamento de \'Algebra, Universidad Complutense,
Plaza de las Ciencias s/n, Ciudad Universitaria, 28040 Madrid, SPAIN}
\curraddr{}
\email{iluengo@ucm.es}
\author[A. Melle]{A.~Melle-Hern\'andez${^3}$}
\address{Departamento de \'Algebra, Universidad Complutense,
Plaza de las Ciencias s/n, Ciudad Universitaria, 28040 Madrid, SPAIN}
\curraddr{}
\email{amelle@ucm.es}
\thanks{${^3}$Partially
supported by the grant 
MTM2013-45710-C02-02-P}
\subjclass[2010]{Primary 14F10,32S40; Secondary 32S05,32A30.}
\keywords{Bernstein polynomial, $b$-exponents, improper integrals.}
\date{}
\dedicatory{Dedicated  to Prof. H.~B. Laufer on the occasion of his 70th birthday}
\begin{document}

\begin{abstract}
In 1982, Tamaki Yano proposed a conjecture predicting  
the set of $b$-exponents of an irreducible  plane curve singularity germ which is generic 
in its equisingularity class. In \cite{ACLM-Yano2} we proved the conjecture for the case in which the germ 
has two Puiseux pairs 
and its algebraic monodromy has distinct eigenvalues. 
In this article we aim to study the Bernstein polynomial 
for any  function  with the  hypotheses above. 
In particular the set of all common roots of those Bernstein polynomials is given.
We provide also bounds for some analytic invariants of singularities and 
illustrate the computations in suitable examples. 


\end{abstract}

\maketitle

\section*{Introduction}

One  of the main guide lines of Prof. H.B.~Laufer in singularity  theory, 
particularly concerning normal two dimensional analytic singularities $(X,0)$, has been which
analytic invariants of $(X,0)$ depends on the topology, i.e., they are
characterized by their link $L_{(X,0)}$. 
The link has the same information as the decorated resolution graph $\Gamma_{(X;0)}$ see \cite{neu:81}.
For instance Laufer questioned the following  in~\cite{Lau:87}:
What conditions does the existence of a hypersurface representative of $(X,0)$ put on a  
decorated dual graph $\Gamma_{(X,0)}$?
The analytic properties of~$X$ depend on the analytic properties of the ramification locus of a projection.
In this work, we study the behavior of some analytic (non-topological) invariants
for germs of curves.
The main goal of the paper is  to consider germs of irreducible plane curve singularities 
with the same topology and describe exactly the set of common roots of their 
corresponding local Bernstein polynomials which are analytic invariants of their germs.



Let $\cO$ be the ring  of germs of holomorphic functions on $(\bc^n,0)$, 
$\mathcal{D}$ the  ring of germs of holomorphic differential operators of finite order with coefficients in~$\cO$. 
Let $s$ be an indeterminate commuting with the elements of $\mathcal{D}$ and set 
$\mathcal{D}[s]=\mathcal{D}\otimes_{\mathbb{C}} \mathbb{C}[s]$.
 
Given an  holomorphic germ  $f\in \cO$, one considers the ring $R_{f,s}:=\cO\left[\frac{1}{f}, s\right]$
and the free $R_{f,s}$-module $R_{f,s} f^s$ of rank $1$ 
with the  natural  $\mathcal{D}[s]$-module structure. Then,   
there exists a non-zero polynomial $B(s)\in \bc[s]$ and some 
differential operator $P=P(s,x,D)\in\mathcal{D}[s]$, 
holomorphic in $x_1,\dots,x_n$ and polynomial in 
$\frac{\partial }{\partial x_1},\dots,\frac{\partial }{\partial x_n}$, which  satisfies  in 
$R_{f,s} f^s$ the following functional equation
\begin{equation}\label{berstein-rel}
P(s,x,D)\cdot f(x)^{s+1}=B(s)\cdot f(x)^s.
\end{equation}
The monic generator $b_{f,0}(s)$ of the ideal  of such polynomials $B(s)$  is called
the {\it Bernstein polynomial} (or $b$-function or  Berstein-Sato polynomial) of $f$ at $0$.  
The same result holds if we replace $\cO$
by the ring of polynomials in a characteristic zero field ${\mathbb K}$  with the obvious corrections,  see e.g. \cite[10, Theorem 3.3]{Co95}.

This result was first obtained for $f$ polynomial by  Bernstein in~\cite{B72} and in general 
by Bj\"{o}rk \cite{B:81}.
One can  prove  that $b_{f,0}(s)$ is divisible by $s+1$, and we  consider the reduced Bernstein polynomial  
$\tilde{b}_{f,0}(s):=\dfrac{b_{f,0}(s)}{s+1}$.

In the case where $f$  defines an isolated singularity, one can consider 
the Brieskorn lattice  
$H_0^{''}:= \Omega^n /df \wedge d \Omega^{n-2}$ and its saturated  
$ \tilde{H}_0^{''}=\sum_ {k\geq 0}  (\partial _t t)^k      H_0^{''} $.  
Malgrange~\cite{M:75} showed that  
the reduced Bernstein polynomial  
$\tilde{b}_{f,0}(s)$ is the minimal polynomial of the endomorphism $-\partial_t t$ on the vector space 
$F:=\tilde{H}_0^{''}/ \partial_t^{-1}  \tilde{H}_0^{''}$, whose dimension equals  the
 Minor number  $\mu(f,0)$ of $f$ at $0$.
The $b$-{\it exponents}  $\{\beta_1, \ldots,\beta_\mu \}$ are  
the roots of the characteristic polynomial of the    endomorphism $\partial_t t$.
Recall  that $\exp(-2i\pi\partial_t t)$ can be identified with the algebraic monodromy
of the Milnor fiber of $f$ at the singular point.

Kashiwara~\cite{K:76} expressed these ideas with differential operators.
Let $\mathcal{M}:=\mathcal{D}[s]f^s/\mathcal{D}[s]f^{s+1}$,
where $s$ defines an endomorphism of $P(s)f^s$ by multiplication.
This morphism keeps invariant $\tilde{\mathcal{M}}:=(s+1)\mathcal{M}$
and defines a linear endomorphism of $(\Omega^n\otimes_{\mathcal{D}}\tilde{\mathcal{M}})_0$
which is naturally identified with $F$ and under this identification $-\partial_t t$ 
becomes the endomorphism defined by the multiplication by~$s$.

In~\cite{M:75}, Malgrange proved that the set $R_{f,0}$ of roots of the Bernstein polynomial 
is contained in $\mathbb{Q}_{<0}$. Moreover, Kashiwara~\cite{K:76} restricted the set of candidate roots.
The number $-\alpha_{f,0}:=\max R_{f,0}$ 
is the opposite of the log canonical threshold of the singularity.  Saito~\cite{MS:89}
proved that 
\begin{equation}\label{eq:saito}
R_{f,0}\subset[-n+\alpha_{f,0},-\alpha_{f,0}].
\end{equation}


Now let  $f$ be an irreducible germ of  plane curve. In 1982, Tamaki Yano \cite{Y82} made a conjecture concerning the $b$-exponents. 
We state this conjecture in the case we are interested in, the case of two Puiseux pairs.
Let $CS_{n_1,m}^{n_2,q}:=(n_1n_2, mn_2,mn_2+q)$  be the characteristic sequence of $f$, such that

\begin{itemize}
\item $1<n_1<m$, $\gcd(m,n_1)=1$;
\item $q>0$, $n_2>1$, $\gcd(q,n_2)=1$.
\end{itemize}
Recall that this means that $f(x,y)=0$ has as root (say over~$x$) a Puiseux expansion
$$
x=\dots+a_1 y^{\frac{m}{n_1}}+\cdots+a_2 y^{\frac{mn_2+q}{n_1n_2}}+\dots
$$
with exactly~$2$ characteristic monomials.


Let 
\begin{equation}\label{eq:A1} 
B_1:=\left\{\alpha=\frac{m+n_1+k}{mn_1n_2}: 0\leq k< mn_1n_2, 
\text{ and }n_2 m\alpha,n_2 n_1 \alpha\notin\bz\right\};
\end{equation}

\begin{equation}\label{eq:A2} 
B_2:=\left\{\left.\alpha=\frac{\overbrace{(m+n_1)n_2+q+k}^{N_k}}{n_2\underbrace{(mn_1n_2+q)}_{D}}\right\vert0\leq k<n_2D
\text{ and }n_2\alpha,D\alpha\notin\mathbb{Z}\right\}.
\end{equation}


\begin{yano}[\cite{Y82}]  For almost all irreducible plane curve singularity germ $f:(\bc^2,0)\to (\bc,0)$  with characteristic sequence
$(n_1n_2,mn_2,mn_2+q)$, the set $B_f$ of the $b$-exponents  $\{\beta_1,\ldots,\beta_\mu\}$ 
 is $B_1\cup B_2$.
\end{yano}  


In \cite{ACLM-Yano2}  Yano's conjecture was proved 
for the case
\begin{equation}\label{eq:simple_roots}
\gcd(q,n_1)=1\text{ or }\gcd(q,m)=1.
\end{equation}
The above condition is equivalent to require that the algebraic monodromy of the irreducible germ has distinct eigenvalues. 
In this case, the~$\mu$ $b$-exponents are all distinct and they coincide with
the opposite of the roots of the reduced Bernstein polynomial (which turns out to be of degree~$\mu$).

There is another set  which is important too, the set of  the  exponents of the monodromy (or spectral numbers, up to the shift by one, in the terminology of
Varchenko \cite{V82}).  This notion was first introduced by Steenbrink \cite{St}.

Let  $f: (\bc^{n},0) \longrightarrow (\bc ,0)$ be a germ of a holomorphic function with isolated singularity.
In \cite{St} Steenbrink constructed a mixed Hodge structure on $H^{n-1}(F_{f,0},\bc)$. 
Let 
$$H^{n-1}(F_{f,0},\bc)_{\lambda}={\text Ker} (T_s-\lambda:H^{n-1}(F_{f,0},\bc)\longrightarrow H^{n-1}(F_{f,0},\bc));$$
where $T_u,T_s$ are, respectively, the unipotent and semisimple factors of the Jordan decomposition
  of the monodromy $h^{n-1}$. 

The set $\spec(f)$  of spectral numbers are $\mu$ rational numbers 
$$0<\alpha_1\leq \alpha_2\leq \cdots \leq \alpha_{\mu}<n$$
which are defined by the following condition:
\begin{gather*}
\# \{ j:\exp(-2\pi i\alpha_j)=
\lambda, \lfloor\alpha_j\rfloor= n-p-1\}=
\dim _{\mathbb{C}}\Gr_F^p H^{n-1}(F_{f,0},\mathbb{C})_{\lambda},\qquad
\lambda \neq 1\\
\# \{j:\alpha_j=n-p\}=\dim _{\bc}{\text Gr}_F^p H^{n-1}(F_{f,0},\bc)_1.
\end{gather*}
%
The set $\spec(f)$ of spectral numbers  is symmetric, that is $\alpha_i+\alpha_{\mu-(i-1)}=n$. 
 It is known that this set  is constant under $\mu$-constant deformation of $f$, see \cite{V82}.

M. Saito~\cite{MS00} gave a formula for $\spec(f)$  in the case of a germ of an irreducible plane curve singularity
(cf.  also  Theorem~3.1 in \cite{Ne} or section 2.2 in \cite{SSS}).
In the case of characteristic sequence $(n_1n_2,mn_2,mn_2+q)$, the set of spectral numbers 
less than $1$ is the union of the sets 
\begin{gather}\label{eq:a1}
A_1=\left\{
\frac{1}{n_2}\left(\frac {i}{n_1}+\frac{j}{m}\right)+\frac{r}{n_2}\ \Big|\ 0< i<n_1,0<j<m,\frac {i}{n_1}+\frac{j}{m}<1,0\leq r<n_2
\right\},\\
\label{eq:a2}
A_2=\left\{\frac{i}{n_2}+\frac{j}{n_1n_2m+q}\ \Big|\ 0<i<n_2, 0<j<n_1n_2m+q, \frac{i}{n_2}+\frac{j}{n_1n_2m+q}<1\right\}.
\end{gather}

Let us denote by $A_j^\perp:=\{2-\alpha\mid\alpha\in A_j\}$, i.e. the symmetric
set of $A_j$ with respect to~$1$. Then
\begin{equation}\label{eq:union}
\spec(f)=A_1\cup A_2\cup A_1^\perp\cup A_2^\perp.
\end{equation}
There is a closed relationship between spectral numbers and $b$-exponents.
The following result summarizes some of them which can be found for instance in~\cite{HS99}
or~\cite[Remark 3.2 iii)]{MS:93} for~\ref{sp-b-exp1}

\begin{prop0}\label{sp-b-exp:hs}
Let $f$ be a  germ of irreducible plane curve singularity. The spectral numbers $\spec(f) $ and the set $B_f$ of  $b$-exponents  
of $f$ satisfy the following conditions:
\begin{enumerate}
\enet{\rm(\arabic{enumi})}
\item\label{sp-b-exp1}  Let $\alpha_k \in\spec(f)$, there exist a $b$-exponent $\beta_k\in B_f$ such 
that $\alpha_k-\beta_k$ is a non negative integer and $0\leq \alpha_k-\beta_k \leq 1$.
\item $\min B_f=\min\spec(f)=\alpha_1$.
\item  $\dim  \tilde{H}_0^{''}/ H_0^{''}= \sum \alpha_i-\sum \beta_i$
\end{enumerate}
\end{prop0}

From now on, we will study germs 
having a fixed characteristic sequence $CS_{n_1,m}^{n_2,q}$ 
satisfying~\eqref{eq:simple_roots}.
Our goal in this article is to show that one can compute the 
 rational numbers that are roots of the Bernstein polynomial
for any such germ. 
 To do this we follow the same method as the one used  in \cite{Pi88,ACLM-Yano2}. 
To prove that a rational number is a root of the Bernstein polynomial of some function $f$, 
we prove that this number is a pole of some integral with a transcendental residue.  
We also offer algorithmic formul{\ae} for the computation of these residues
and bounds for $\dim  \tilde{H}_0^{''}/ H_0^{''}$.

The two main results in this paper are the following ones.
We split the sets $B_1$ and $B_2$ in terms of two semigroups: $\Gamma$, the one associated with 
$CS_{n_1,m}^{n_2,q}$ (generated by $n_2 n_1, n_2 m, n_1 m n_2+q$)
and~$\Gamma_1$, associated to the \emph{truncation} to the first Puiseux pair (generated by $m,n_1$). Let
\begin{equation}\label{eq:b1}
B_{11}= 
\left\{\beta=\frac{m\beta_1+n_1\beta_2}{mn_1n_2} \in B_1\right|\left. 
\vphantom{\frac{1}{×n}}
\beta_1,\beta_2\in \bz_{\geq 1}\right\}
\end{equation}
(which means that $k$ in \eqref{eq:A1} is in $\Gamma_1$) and
\begin{gather}\label{eq:b2}
B_{21}\!=\!\left\{ \beta=\frac{(m\beta_1+n_1\beta_2)n_2+(mn_1n_2+q)\beta_3+q}{n_2(mn_1n_2+q)}\in B_2\right|
\left. \vphantom{\frac{1}{n}} \beta_1,\beta_2\in\bz_{\geq 1}, \beta_3\in \bz_{\geq 0}
\right\}.
\end{gather}
(which means that $k$ in \eqref{eq:A2} is in $\Gamma_2$).

In Theorem~\ref{cor:common} we prove that
$$
B_{11}\cup B_{21}= \bigcap_{f\in\mathcal{S}_\mu} R_{f,0}
$$
where $\mathcal{S}_\mu$ is the set of all germs $f$ with the  topological type, 
of the characteristic sequence $CS_{n_1,m}^{n_2,q}$ satisfying~\eqref{eq:simple_roots}.

In \S\ref{sec:bounds}, we prove bounds for 
$\dim  \tilde{H}_0^{''}/ H_0^{''}$ for these germs.  
Let
$$
m=\tilde{q}n_1+r_m ,0<r_m<n_1,\quad q=hn_2+r_q,0\leq h,\quad 0<r_q<n_2.
$$ 
Then
$$ (n_2-1)(m-1)(n_1-1) \leq\dim  \tilde{H}_0^{''}/ H_0^{''}\leq \frac{\mu}{2}-n_2(m+n_1)-q+\tilde{q}+h+4;$$
and the second inequality is generically an equality.

We end the article with several families of examples.  
In Theorem~\ref{Yano-family} it is proved that
all polynomials with characteristic sequence $(4,6,6+q)$ have the same Bernstein polynomial
(this is the original Yano's family). 
Next if a polynomial has characteristic sequence 
$(8,10,10+q)$ then we compute its Bernstein polynomial up to six roots 
(note that the Milnor number equals $63+q$) and we have explicit formulae to decide the remaining roots.
And finally,
for the last example, we find differences on the Tjurina and Bernstein stratifications.

\numberwithin{equation}{section}
\section{Two variable integrals and Bernstein polynomial}
\label{sec:res1}

 Let us recall and collect some definitions,  results and consequences from~\cite{ACLM-Yano2}.

\begin{dfn}
We say that a real polynomial $f\in \br [x,y]$ is \emph{positive} if $f(x,y)>0$ for all 
$(x,y)\in[0,1]^2$. 
\end{dfn}
Let $f\in \br [x,y]$ positive.
Let $a_1,a_2, b_1,b_2\in\bz$ be fixed such that $a_1,a_2\geq 0,  b_1,b_2 \geq 1 $. 
We denote the following complex variable integral by
\begin{equation}
\mathcal{Y}(s)=\mathcal{Y}_{f,a_1,b_1,a_2,b_2}(s):=
\int_0^1  \int_0^1  f(x,y)^s x^{a_1s+b_1} y^{a_2s+b_2} \frac{dx}{x} \frac{dy}{y}.
\end{equation}

\begin{prop}[Proposition~1.4~\cite{ACLM-Yano2}]\label{Essou1a} The function $\mathcal{Y}(s)$ satisfies the following properties:
\begin{enumerate}
\enet{\rm(\arabic{enumi})} 
 \item  It is absolutely convergent for $\Re(s)>\alpha_0$, where $\alpha_0=
\sup\left(-\frac{b_1}{a_1},-\frac{b_2}{a_2}\right)$
\item  It has a meromorphic continuation on $\bc$ with poles of order at most $2$ contained in 
$S=\left\{ -\frac{b_1+\nu_1}{a_1}, \, \, \nu_1 \in \mathbb{Z}_{\geq 0} \,\right\} \cup 
\left\{ -\frac{b_2+\nu_2}{a_2}, \, \, \nu_2\in \mathbb{Z}_{\geq 0} \,\right\}$
\end{enumerate}

\end{prop}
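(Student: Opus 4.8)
The plan is to analyze $\mathcal{Y}(s)$ as a two-dimensional beta-type integral and extract its analytic behavior by separating the contribution of the corners of the square $[0,1]^2$, which is where all the non-integrability can originate since $f$ is positive (hence bounded away from $0$ on the compact square) and smooth.

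First I would establish absolute convergence for $\Re(s) > \alpha_0$. Since $f$ is continuous and strictly positive on $[0,1]^2$, there are constants $0 < c \le C$ with $c \le f(x,y) \le C$ there, so $|f(x,y)^s| = f(x,y)^{\Re(s)}$ is bounded above and below by positive constants on vertical/horizontal strips; consequently the modulus of the integrand is comparable to $x^{a_1\Re(s)+b_1-1} y^{a_2\Re(s)+b_2-1}$. The double integral $\int_0^1 x^{a_1\Re(s)+b_1-1}\,dx \cdot \int_0^1 y^{a_2\Re(s)+b_2-1}\,dy$ converges exactly when both exponents exceed $-1$, i.e. when $a_1\Re(s)+b_1 > 0$ and $a_2\Re(s)+b_2 > 0$, which (reading off the sign of $a_i$, recalling $a_i \ge 0$, $b_i \ge 1$) is precisely $\Re(s) > \alpha_0$ with $\alpha_0 = \sup(-b_1/a_1, -b_2/a_2)$ (the corresponding term being $-\infty$ when $a_i = 0$). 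By Fubini and dominated convergence this also shows $\mathcal{Y}(s)$ is holomorphic on that half-plane.

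For the meromorphic continuation, I would proceed one variable at a time, treating the $x$-integral first with $y$ as a parameter. The idea is the classical one for Mellin transforms: write $\int_0^1 g(x) x^{\sigma-1}\,dx$ where $g(x) = f(x,y)^s y^{a_2 s + b_2 - 1}$ is smooth in $x$ near $0$, and iterate integration by parts (equivalently, subtract the Taylor polynomial of $g$ at $x=0$): for any $M$,
\begin{equation*}
\int_0^1 g(x)\, x^{a_1 s + b_1 - 1}\,dx = \sum_{\nu_1=0}^{M} \frac{g^{(\nu_1)}(0)}{\nu_1!}\cdot\frac{1}{a_1 s + b_1 + \nu_1} + \int_0^1 \Bigl(g(x) - \sum_{\nu_1=0}^{M}\frac{g^{(\nu_1)}(0)}{\nu_1!}x^{\nu_1}\Bigr) x^{a_1 s + b_1 - 1}\,dx,
\end{equation*}
where the remainder integral converges for $\Re(s) > (-b_1 - M - 1)/a_1$, pushing the domain leftward; the explicit terms exhibit simple poles at $s = -(b_1+\nu_1)/a_1$. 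Crucially, $g^{(\nu_1)}(0)$ is, by the Leibniz/Faà di Bruno rule applied to $f(x,y)^s$, a polynomial in $s$ times a smooth function of $y$ times $y^{a_2 s + b_2 - 1}$; so after this step the remaining $y$-dependence is again of the same shape (a smooth function of $y$, times a power $y^{a_2 s + b_2 - 1}$, times a polynomial in $s$, uniformly on compact $s$-sets away from the $x$-poles). Applying the identical one-variable procedure in $y$ to each resulting term then produces poles at $s = -(b_2+\nu_2)/a_2$, and the product of the two families of simple-pole factors accounts for poles of order at most $2$, located in $S$. Letting $M \to \infty$ and using that the location of poles is independent of $M$ gives the meromorphic continuation to all of $\bc$ with the stated pole set and pole order.

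The main obstacle is the bookkeeping in the second step: one must verify that after extracting the $x$-Taylor terms the coefficients are genuinely of the form ``smooth in $y$ times $y^{a_2 s + b_2 - 1}$ times a polynomial in $s$'', with all estimates locally uniform in $s$, so that the $y$-procedure applies verbatim and the two pole families combine cleanly; this is where positivity of $f$ (no interior or edge vanishing) is used essentially, to guarantee that $f(x,y)^s$ is jointly smooth on a neighborhood of $[0,1]^2$ and that no poles arise from anywhere except the four corners. This is exactly the content of Proposition~1.4 of~\cite{ACLM-Yano2}, so I would invoke that reference for the detailed estimates rather than reproduce them.
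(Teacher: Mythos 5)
Your proposal is correct and follows essentially the same route as the cited source: the paper states this result by reference to \cite{ACLM-Yano2}, whose argument is exactly the Mellin-type continuation you describe (Taylor subtraction at the corner in each variable, simple poles from each family, order two where they interact), as is confirmed by the residue formula recalled in Proposition~\ref{continuation}. The only tiny imprecision is that $\partial_x^{\nu_1}\bigl(f(x,y)^s\bigr)\big|_{x=0}$ is not literally a polynomial in $s$ times a smooth function of $y$ but a finite sum of terms $(\text{polynomial in }s)\cdot(\text{smooth in }y)\cdot f(0,y)^{s-k}$; since $f>0$ this is still smooth in $y$ and entire in $s$ with locally uniform bounds, so the $y$-step applies verbatim and nothing in your argument is affected.
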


\begin{ntc}\label{mer-cont}
Let $f: [0,1]\to\br$ be a continous function. 
We will denote by $G_{f}(s)$ the meromorphic continuation of
$$
\int_0^1 f(t)t^s\frac{dt}{t}.
$$
\end{ntc}

\begin{prop}[Proposition~1.6~\cite{ACLM-Yano2}]\label{continuation} 
With the hypotheses of Proposition{\rm~\ref{Essou1a}},
let $\nu_1\in \mathbb{Z}_{\geq 0}$ be fixed and such that $\alpha=-\frac{b_1+\nu_1}{a_1}\neq -\frac{b_2+\nu_2}{a_2}$ 
for all $\nu_2\in \mathbb{Z}_{\geq 0}$, then the pole  of $\mathcal{Y}(s)$  at $\alpha$ is simple and 

\begin{equation}
\res_{s=\alpha} {\mathcal{Y}(s)}=\frac{1}{\nu_1! a_1} 
G_{ h_{\nu_1,\alpha,x}}(a_2\alpha+b_2),\quad
h_{\nu_1,\alpha,x}(y):=\frac{\partial^{\nu_1} f^\alpha}{\partial x^{\nu_1}}(0,y).
\end{equation}
\end{prop}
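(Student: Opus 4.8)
The plan is to isolate the contribution of the boundary component $\{x=0\}$ to the pole at $\alpha=-\frac{b_1+\nu_1}{a_1}$ by a finite Taylor expansion in the variable $x$, following the scheme of the proof of Proposition~\ref{Essou1a}. Since $f$ is positive on $[0,1]^2$, the function $(x,y)\mapsto f(x,y)^s=\exp\bigl(s\log f(x,y)\bigr)$ is real-analytic on $[0,1]^2$ and depends holomorphically on $s$, so the only source of non-convergence of $\mathcal Y(s)$, hence of its poles, is the factor $x^{a_1s+b_1}\frac{dx}{x}$ near $x=0$ (and the analogous one in $y$, which by hypothesis does not interact with $\alpha$). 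For $\Re(s)>\alpha_0$ Fubini applies, so we may write
\begin{equation*}
\mathcal Y(s)=\int_0^1\Bigl(\int_0^1 f(x,y)^s x^{a_1s+b_1}\frac{dx}{x}\Bigr) y^{a_2s+b_2}\frac{dy}{y}.
\end{equation*}

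For the inner integral, fix $y\in(0,1]$ and $s$ in a compact neighbourhood of $\alpha$ and apply Taylor's formula in $x$:
\begin{equation*}
f(x,y)^s=\sum_{j=0}^{\nu_1}\frac{1}{j!}\,\frac{\partial^j f^s}{\partial x^j}(0,y)\,x^j+R_{\nu_1}(x,y,s),\qquad R_{\nu_1}(x,y,s)=O(x^{\nu_1+1}),
\end{equation*}
with the bound on $R_{\nu_1}$ uniform for $y\in[0,1]$ and locally uniform in $s$. Integrating term by term, $\int_0^1 x^{a_1s+b_1+j}\frac{dx}{x}$ continues meromorphically to $\frac{1}{a_1s+b_1+j}$, while $\int_0^1 R_{\nu_1}(x,y,s)x^{a_1s+b_1-1}\,dx$ converges and is holomorphic in $s$ for $\Re(s)>\alpha-\frac{1}{a_1}$, in particular near $\alpha$. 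Since $a_1\alpha+b_1+j$ vanishes exactly when $j=\nu_1$, only that term is singular at $s=\alpha$, and there $a_1s+b_1+\nu_1=a_1(s-\alpha)$. Hence, near $s=\alpha$,
\begin{equation*}
\int_0^1 f(x,y)^s x^{a_1s+b_1}\frac{dx}{x}=\frac{1}{\nu_1!\,a_1(s-\alpha)}\,\frac{\partial^{\nu_1}f^s}{\partial x^{\nu_1}}(0,y)+H(s,y),
\end{equation*}
with $H(\cdot,y)$ holomorphic near $\alpha$ and $H(s,\cdot)$ continuous on $[0,1]$.

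Now integrate over $y$. The function $y\mapsto\frac{\partial^{\nu_1}f^s}{\partial x^{\nu_1}}(0,y)$ is continuous on $[0,1]$, so by Notation~\ref{mer-cont} the integral $\int_0^1\frac{\partial^{\nu_1}f^s}{\partial x^{\nu_1}}(0,y)\,y^{a_2s+b_2}\frac{dy}{y}$ continues meromorphically in $s$ with possible poles only where $a_2s+b_2\in\bz_{\leq 0}$, i.e.\ at $s=-\frac{b_2+\nu_2}{a_2}$; the same holds for $\int_0^1 H(s,y)\,y^{a_2s+b_2}\frac{dy}{y}$. The hypothesis $\alpha\neq-\frac{b_2+\nu_2}{a_2}$ guarantees both are holomorphic at $s=\alpha$, the first equal there to $G_{h_{\nu_1,\alpha,x}}(a_2\alpha+b_2)$ with $h_{\nu_1,\alpha,x}(y)=\frac{\partial^{\nu_1}f^\alpha}{\partial x^{\nu_1}}(0,y)$. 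Multiplying by $\frac{1}{\nu_1!\,a_1(s-\alpha)}$, we conclude that the pole of $\mathcal Y(s)$ at $\alpha$ is simple and
\begin{equation*}
\res_{s=\alpha}\mathcal Y(s)=\frac{1}{\nu_1!\,a_1}\,G_{h_{\nu_1,\alpha,x}}(a_2\alpha+b_2).
\end{equation*}

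The step requiring most care is the passage from the pointwise-in-$y$ expansion to a statement about $\mathcal Y(s)$ itself: one must justify interchanging the $y$-integration with the meromorphic continuation in $s$, both for the coefficient $\frac{\partial^{\nu_1}f^s}{\partial x^{\nu_1}}(0,\cdot)$ and for the $x$-integrated remainder $H(s,\cdot)$. This is exactly where the uniformity of $R_{\nu_1}=O(x^{\nu_1+1})$ over $y\in[0,1]$ and locally in $s$ is used, together with the continuity in $y$ of all derivatives $\frac{\partial^j f^s}{\partial x^j}(0,y)$; both follow from positivity of $f$, which keeps $f^s$ real-analytic with all $x$-derivatives bounded on the compact set $[0,1]^2\times\{\,|s-\alpha|\le\varepsilon\,\}$, and from the integral form of the Taylor remainder. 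Once these uniformities are in place, the remainder of the argument is routine bookkeeping.
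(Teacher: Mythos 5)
Your argument is correct and follows essentially the same route as the proof the paper relies on (the statement is quoted from \cite{ACLM-Yano2}, and the remark immediately after Proposition~\ref{continuation} shows the argument there has exactly this structure): Fubini for $\Re(s)>\alpha_0$, a finite Taylor expansion in $x$ whose $j=\nu_1$ term produces the unique simple pole $\frac{1}{a_1(s-\alpha)}$ at $\alpha$, and the non-resonance hypothesis ensuring the continued $y$-integral is regular at $\alpha$ with value $G_{h_{\nu_1,\alpha,x}}(a_2\alpha+b_2)$. The one point to phrase more carefully is that the continuation of the $y$-integrals past $\Re(a_2s+b_2)=0$ is not given by Notation~\ref{mer-cont} for merely continuous integrands but by a further finite expansion in $y$ of $\partial_x^{\nu_1}f^s(0,\cdot)$ and of $H(s,\cdot)$ (your real-analyticity and uniform-bound observations supply exactly this), which is what the remark after the Proposition expresses by saying $G_{h_{\nu_1,\alpha,x}}(a_2s+b_2)$ admits an absolutely convergent holomorphic integral expression for $\Re(s)>-N_2-1$.
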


Note that, under the hypotheses of the above Proposition, 
$G_{ h_{\nu_1,\alpha, x}}(a_2 s+b_2)$ admits an integral  expression which
is absolutely convergent and holomorphic
for  $\Re(s)> -N_2-1$,  with $N_2$ such that $\alpha>-  \frac{b_2+N_2+1}{a_2}$,
see the proof in~\cite{ACLM-Yano2} of the above Proposition~\ref{Essou1a} .


We collect next  a result which relates these integrals with the  beta function $\boldsymbol{B}(s_1,s_2)$.

\begin{lema}[Lemma~1.8~\cite{ACLM-Yano2}]\label{beta}
Let $p\in \mathbb{N}$ and $c\in \mathbb{R}_{>0}$. 
Given $s_1, s_2\in \bc$ 
  such that  $-\alpha=s_1+s_2>0$
then 
\begin{equation}
G_{ \left( y^{p}+c\right)^\alpha}(p s_1)+
G_{ \left(1 +c x^{p}\right)^\alpha}(ps_2)=
\frac{c^{-s_2}}{p}\boldsymbol{B}\left( s_1, s_2\right)
\end{equation}
where $\boldsymbol{B}$ is the \emph{beta function}.
\end{lema}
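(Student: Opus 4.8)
The plan is to reduce the identity to the classical Euler integral $\boldsymbol{B}(s_1,s_2)=\int_0^\infty t^{s_1-1}(1+t)^{-s_1-s_2}\,dt$. First I would clear the exponent~$p$: the substitutions $u=y^p$ and $v=x^p$ give $\frac{dy}{y}=\frac1p\frac{du}{u}$, $y^{ps_1}=u^{s_1}$, and similarly for $x$, so that for $\Re(s_1)>0$ (resp. $\Re(s_2)>0$) the defining integrals converge and
\[
G_{(y^p+c)^\alpha}(ps_1)=\frac1p\int_0^1(u+c)^\alpha u^{s_1-1}\,du,\qquad
G_{(1+cx^p)^\alpha}(ps_2)=\frac1p\int_0^1(1+cv)^\alpha v^{s_2-1}\,dv .
\]
Note that the integrands $y\mapsto(y^p+c)^\alpha$ and $x\mapsto(1+cx^p)^\alpha$ are smooth on $[0,1]$ because $c>0$, so the $G$'s are defined via Notation~\ref{mer-cont}; moreover every base occurring below is a positive real number and $\alpha=-(s_1+s_2)$ is a fixed real exponent, so there are no branch ambiguities.

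Next, working in the region $\Re(s_1)>0$, $\Re(s_2)>0$ where everything converges absolutely, I would transform the second integral by $v\mapsto 1/w$ with $w\in(1,\infty)$: since $\frac{dv}{v}=-\frac{dw}{w}$, $v^{s_2}=w^{-s_2}$ and $(1+cv)^\alpha=w^{-\alpha}(w+c)^\alpha$, and since $-\alpha-s_2=s_1$ by hypothesis, it becomes $\int_1^\infty(w+c)^\alpha w^{s_1-1}\,dw$. Adding it to $\int_0^1(u+c)^\alpha u^{s_1-1}\,du$ merely glues the two ranges of integration, whence
\[
G_{(y^p+c)^\alpha}(ps_1)+G_{(1+cx^p)^\alpha}(ps_2)=\frac1p\int_0^\infty(u+c)^\alpha u^{s_1-1}\,du .
\]
Finally the substitution $u=ct$ turns the right-hand side into $\frac1p\,c^{\alpha+s_1}\int_0^\infty t^{s_1-1}(1+t)^{\alpha}\,dt=\frac{c^{-s_2}}{p}\,\boldsymbol{B}(s_1,s_2)$, using $\alpha+s_1=-s_2$ and $\alpha=-(s_1+s_2)$; this last integral is exactly the beta integral, convergent precisely when $\Re(s_1)>0$ and $\Re(s_2)>0$. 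This proves the lemma on that region.

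To reach the full range $s_1+s_2>0$ I would argue by analytic continuation. Fix the positive real number $\sigma:=s_1+s_2$, so that $\alpha=-\sigma$ is a constant, and regard both sides as functions of the single variable $s_1\in\bc$ with $s_2=\sigma-s_1$. By Notation~\ref{mer-cont} the maps $s_1\mapsto G_{(y^p+c)^{-\sigma}}(ps_1)$ and $s_1\mapsto G_{(1+cx^p)^{-\sigma}}(p(\sigma-s_1))$ are meromorphic on $\bc$, and so is $\boldsymbol{B}(s_1,\sigma-s_1)=\Gamma(s_1)\Gamma(\sigma-s_1)/\Gamma(\sigma)$. The two sides agree on the vertical strip $0<\Re(s_1)<\sigma$, which is nonempty and inside which automatically $\Re(s_2)=\sigma-\Re(s_1)>0$; hence by the identity theorem for meromorphic functions on the connected domain $\bc$ they agree identically, which is the asserted equality for all $s_1,s_2$ with $s_1+s_2>0$.

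The three changes of variable are routine. The only step demanding a little care is the continuation argument: one must keep $\alpha=-(s_1+s_2)$ frozen as a constant while varying $s_1$, and verify that the overlap region where the elementary computation is valid is an open subset of the line $\{s_1+s_2=\sigma\}$ — equivalently a full vertical strip in the $s_1$-plane — so that the identity theorem genuinely forces global agreement rather than agreement on a lower-dimensional set.
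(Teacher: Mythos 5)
Your proof is correct: the three substitutions and the splitting of $\int_0^\infty(u+c)^\alpha u^{s_1-1}\,du$ at $u=1$ constitute exactly the standard computation behind this identity, and your continuation step (freezing $\sigma=s_1+s_2$, viewing both sides as meromorphic functions of $s_1$, and applying the identity theorem on the nonempty strip $0<\Re(s_1)<\sigma$) legitimately extends the equality from the region $\Re(s_1),\Re(s_2)>0$ to the stated range. The paper itself imports this statement as Lemma~1.8 of \cite{ACLM-Yano2} without reproving it, and your argument is essentially the same one used there.
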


Our goal in this article is to show that one can compute the 
 rational numbers that are roots of the Bernstein polynomial
for any function having characteristic sequence $CS_{n_1,m}^{n_2,q}$ satisfying~\eqref{eq:simple_roots}.
To do this we follow the same methods and ideas as the one used by Pi.~Cassou-Nogu\`es in \cite{Pi861,Pi862,Pi872,Pi88,ACLM-Yano2}. 
To prove that a rational number is a root of the Bernstein polynomial of some function $f$, 
we prove that such a number is pole of some integral whose residue is a transcendental number.  

To use the method one needs to start with a real polynomial $f\in \br [x,y]$ 
whose complex analytic germ at the origin has $CS_{n_1,m}^{n_2,q}$ as characteristic sequence. 

\begin{dfn} A polynomial $f\in\br[x,y]$ 
is said to be of \emph{type $(n_1n_2, mn_2,mn_2+q )^+$ }
if it satisfies:
\begin{equation}\label{eq:f_gen1}
f(x,y)=(x^{n_1}+ y^{m}+h_1(x,y))^{n_2}+ x^a y^b+h_2(x,y)
\end{equation}
where
\begin{enumerate}
\enet{(G$^+$\arabic{enumi})}
\item\label{G1+} $ h_1(x,y)=\sum_{(i,j)\in\mathcal{P}_{n_1,m}} a_{i j} x^i y^j\in\mathbb{R}[x,y]$,
where
$$
\mathcal{P}_{n_1,m}:=\{(i,j)\in\mathbb{Z}_{\geq 0}^2\mid m i+n_1 j> m n_1\};
$$
\item\label{G2+} $a,b\geq 0$ such that $a m+b n_1=mn_1n_2+q$;
\item\label{G3+} the polynomial $h_2\in\mathbb{R}[x,y]$, whose support is disjoint from the other terms of $f$,
satisfies that the characteristic sequence of $f$ is $CS_{n_1,m}^{n_2,q}$.
\end{enumerate}
\end{dfn}

\begin{prop}\label{123to4}
 Let $f\in\br[x,y]$ be a real polynomial as in \eqref{eq:f_gen1}    
 satisfying \eqref{G1+}, \eqref{G2+} and ~\eqref{G3+}.
Then there exists a domain  $D=[0,\eta]^2$, with $\eta\leq 1$, such that $f>0$ in $[0,\eta]^2\setminus\{(0,0)\}$.
\end{prop}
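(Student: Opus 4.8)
The plan is to show that the lowest-degree part of $f$ with respect to a suitable weight is already strictly positive on the relevant boundary pieces, so that positivity on a small bidisc $[0,\eta]^2$ follows by a compactness/continuity argument.

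First I would fix the quasi-homogeneous weight $w(x)=m$, $w(y)=n_1$, for which $x^{n_1}$ and $y^m$ both have weight $mn_1$, and observe that, by~\eqref{G1+}, every monomial of $h_1$ has weight strictly larger than $mn_1$. Hence the weighted initial form of $x^{n_1}+y^m+h_1$ is exactly $x^{n_1}+y^m$, and the weighted initial form of $(x^{n_1}+y^m+h_1)^{n_2}$ is $(x^{n_1}+y^m)^{n_2}$, of weight $mn_1n_2$. By~\eqref{G2+}, the monomial $x^ay^b$ has weight $am+bn_1=mn_1n_2+q>mn_1n_2$, so it does not interfere with the initial form either; and by~\eqref{G3+} the support of $h_2$ is disjoint from the Newton polygon determined by the characteristic sequence, so all its monomials have weight $>mn_1n_2$ as well (this is where the precise meaning of ``the characteristic sequence of $f$ is $CS_{n_1,m}^{n_2,q}$'' is used: it forces $h_2$ to lie strictly above the relevant Newton boundary). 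Therefore the $w$-initial form of $f$ is $\operatorname{In}_w(f)=(x^{n_1}+y^m)^{n_2}$.

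Next I would write $f(x,y)=(x^{n_1}+y^m)^{n_2}+g(x,y)$ where every monomial $x^iy^j$ appearing in $g$ satisfies $mi+n_1j>mn_1n_2$. On the corner we note $x^{n_1}+y^m>0$ for every $(x,y)\in[0,1]^2\setminus\{(0,0)\}$, so $(x^{n_1}+y^m)^{n_2}>0$ there; the issue is only that near $(0,0)$ this term is small and $g$ could in principle be comparably small or negative. To control this, substitute $x=tu$, $y=tv$ with $(u,v)$ in the boundary $\partial([0,1]^2)$, i.e. $\max(u,v)=1$, and $t\in(0,1]$; then $(x^{n_1}+y^m)^{n_2}=t^{mn_1n_2}\big(u^{n_1}t^{\,mn_1n_2(\cdot)}+\dots\big)$ — more cleanly, use instead the weighted scaling $x\mapsto \lambda^m x$, $y\mapsto \lambda^n_1 y$: then $f(\lambda^m x,\lambda^{n_1}y)=\lambda^{mn_1n_2}\big((x^{n_1}+y^m)^{n_2}+\sum_{mi+n_1j>mn_1n_2}\lambda^{(mi+n_1j)-mn_1n_2}c_{ij}x^iy^j\big)$. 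The bracketed function tends uniformly, as $\lambda\to 0^+$, to $(x^{n_1}+y^m)^{n_2}$ on the compact set $K=\{(x,y)\in[0,1]^2:\max(x,y)=1\}$, which is bounded away from $(0,0)$; hence $\min_K (x^{n_1}+y^m)^{n_2}=:c_0>0$, and there is $\lambda_0>0$ so that for $\lambda\le\lambda_0$ the bracket stays $\ge c_0/2>0$ on $K$.

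Finally I would unwind the scaling: every point of $[0,\eta]^2\setminus\{(0,0)\}$, for $\eta$ small enough (e.g. $\eta=\lambda_0^{\min(m,n_1)}$ or any $\eta\le 1$ for which the required $\lambda$ stays $\le\lambda_0$), can be written as $(\lambda^m x,\lambda^{n_1}y)$ with $(x,y)\in K$ and $\lambda\le\lambda_0$, whence $f>0$ at that point because $\lambda^{mn_1n_2}>0$ and the bracket is positive. Taking $\eta\le 1$ accordingly gives the claim. The main obstacle, and the step deserving the most care, is the reduction via~\eqref{G3+}: one must justify carefully that requiring $f$ to have characteristic sequence $CS_{n_1,m}^{n_2,q}$ really does force the support of $h_2$ to lie strictly above the Newton polygon governing the $w$-initial form (equivalently, that the two ``characteristic'' faces of the Newton polygon of $f$ are exactly the ones carried by $(x^{n_1}+y^m)^{n_2}$ and by $x^ay^b$), so that no monomial of $g$ has weight $\le mn_1n_2$; once that is in place, the rest is a routine uniform-convergence argument on the compact set $K$.
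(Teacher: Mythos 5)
Your argument is correct and rests, at bottom, on the same fact as the paper's proof---namely that the weighted initial part of $f$ for the weights $w(x)=m$, $w(y)=n_1$ is $(x^{n_1}+y^m)^{n_2}$, which is strictly positive on $[0,1]^2\setminus\{(0,0)\}$---but you execute it differently: the paper disposes of the statement in two lines, asserting that the real zero locus of $f$ is a deformation of that of $x^{n_1}+y^m$, which meets $[0,1]^2$ only at the origin, whereas you make this quantitative via the scaling $(x,y)\mapsto(\lambda^m x,\lambda^{n_1}y)$ and a compactness argument on $K=\{(x,y)\in[0,1]^2:\max(x,y)=1\}$. What your route buys is an essentially complete proof where the paper offers a sketch; what it requires in exchange is the Newton-polygon input you yourself flag, and that input is indeed true and standard: an irreducible germ with characteristic sequence $(n_1n_2,mn_2,mn_2+q)$ has, in these coordinates (where $y=0$ is transverse and $x=0$ is the tangent line), Newton polygon equal to the single segment joining $(n_1n_2,0)$ and $(0,mn_2)$, since a monomial of weight $mi+n_1j<mn_1n_2$ would force $\ord f(x,0)<n_1n_2$, $\ord f(0,y)<mn_2$, or a polygon incompatible with multiplicity $n_1n_2$ and first Puiseux exponent $m/n_1$; and monomials of weight exactly $mn_1n_2$ sit at the lattice points $\left(n_1i',m(n_2-i')\right)$, which already occur in $(x^{n_1}+y^m)^{n_2}$ with positive coefficients (and cannot be affected by $h_1$ or $x^ay^b$, whose weights are larger), so the support-disjointness in \eqref{G3+} excludes them from $h_2$. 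Two small corrections: your parenthetical about the ``two characteristic faces of the Newton polygon of $f$'' is slightly off---the Newton polygon of $f$ has a single compact face and $x^ay^b$ lies strictly above it, and only that face matters here; and in the final bookkeeping, for every point of $[0,\eta]^2$ to correspond to $\lambda\le\lambda_0$ one needs $\eta\le\lambda_0^{\max(m,n_1)}=\lambda_0^{m}$ (for $\lambda_0\le 1$), not $\lambda_0^{\min(m,n_1)}$; your hedge ``any $\eta$ for which the required $\lambda$ stays $\le\lambda_0$'' covers this, but the displayed formula should be fixed.
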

\begin{proof}
Note that the real zero locus of $x^{n_1}+y^m$ intersects $[0,1]^2$ only at $(0,0)$. Since
the real zero locus of $f=0$ is a deformation of the previous one, then 
there is $\eta>0$ for which the statement follows. 
\end{proof}

For $\beta_1,\beta_2\in\bz_{\geq 1}$, and $f$ of type $(n_1n_2, mn_2,mn_2+q )^+$  one defines:
\begin{equation}
I_+(f,\beta_1,\beta_2)(s):=\int_0^1  \int_0^1  f(x,y)^s\, x^{\beta_1} 
y^{ \beta_2}\, \frac{dx}{x}\, \frac{dy}{y}.
\end{equation}

\begin{prop}[\cite{ACLM-Yano2}]\label{prop:newton1}
Let $f$ be of type $(n_1n_2, mn_2,mn_2+q )^+$
and $\beta_1,\beta_2\in\bz_{\geq 1}$. Then the integral $I_+(f,\beta_1,\beta_2)(s)$ is  absolutely 
convergent for $\Re{(s)}>-\frac{\beta_1m+\beta_2n_1}{mn_1n_2}$
and may have simple poles only for $s=-\frac{\beta_1m+\beta_2n_1+\nu}{mn_1n_2}, \nu\in \bz_{\geq 0}$.
\end{prop}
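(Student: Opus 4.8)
The plan is to reduce $I_+(f,\beta_1,\beta_2)(s)$, piece by piece, to integrals of the type $\mathcal{Y}(s)$ analysed in Propositions~\ref{Essou1a} and~\ref{continuation}, following the strategy of~\cite{Pi88,ACLM-Yano2}. By Proposition~\ref{123to4} (replacing $[0,1]^2$ by a smaller square if needed) we may assume $f>0$ on $[0,1]^2\setminus\{(0,0)\}$; the asserted convergence abscissa and candidate pole set depend only on the germ of $f$ at the origin, so this costs nothing. First I would split the square along the ``principal'' Newton curve $x^{n_1}=y^m$: write $[0,1]^2=R_+\cup R_-$ with $R_+=\{x^{n_1}\ge y^m\}$ and $R_-=\{x^{n_1}\le y^m\}$ (overlapping in a null set), and treat the two halves separately.

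On $R_+$ I would use the monomial change of variables $(x,y)=(t^m,t^{n_1}u)$ with $(t,u)\in[0,1]^2$, a bijection onto $R_+$ up to null sets, for which $\frac{dx}{x}\frac{dy}{y}=m\,\frac{dt}{t}\frac{du}{u}$ and $x^{\beta_1}y^{\beta_2}=t^{m\beta_1+n_1\beta_2}u^{\beta_2}$. The key point is that this substitution clears all denominators: using~\ref{G1+} (the $t$-exponents $mi+n_1j-mn_1$ produced by $h_1$ are $\ge1$), \ref{G2+} (the $t$-exponent $am+bn_1-mn_1n_2=q$ produced by $x^ay^b$ is $>0$), and~\ref{G3+} (the support of $h_2$ lies strictly above the Newton segment of $(x^{n_1}+y^m)^{n_2}$, so its $t$-exponents are positive), one obtains $f(t^m,t^{n_1}u)=t^{mn_1n_2}\,F(t,u)$ with $F\in\br[t,u]$ and $F(0,u)=(1+u^m)^{n_2}$. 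Since $F(0,u)\ge1$ on $[0,1]$ while $F(t,u)=t^{-mn_1n_2}f(t^m,t^{n_1}u)>0$ for $t\in(0,1]$, the polynomial $F$ is \emph{positive} on $[0,1]^2$, and substituting turns the $R_+$-part of $I_+(f,\beta_1,\beta_2)(s)$ into $m\,\mathcal{Y}_{F,\,mn_1n_2,\,m\beta_1+n_1\beta_2,\,0,\,\beta_2}(s)$. The same argument on $R_-$ with $(x,y)=(w^mv,w^{n_1})$ gives $f=w^{mn_1n_2}\widetilde F(w,v)$ with $\widetilde F\in\br[w,v]$ positive on $[0,1]^2$ and $\widetilde F(0,v)=(1+v^{n_1})^{n_2}$, turning the $R_-$-part into $n_1\,\mathcal{Y}_{\widetilde F,\,mn_1n_2,\,m\beta_1+n_1\beta_2,\,0,\,\beta_1}(s)$.

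It then only remains to quote the two propositions. Both of these $\mathcal{Y}$-integrals have parameters $a_1=mn_1n_2>0$, $a_2=0$, $b_1=m\beta_1+n_1\beta_2\ge1$, and $b_2$ equal to $\beta_1$ or $\beta_2$ (hence $\ge1$) — this last being exactly where the hypothesis $\beta_1,\beta_2\in\bz_{\ge1}$ is used. Proposition~\ref{Essou1a}(1) gives absolute convergence for $\Re(s)>\sup\big(-\tfrac{m\beta_1+n_1\beta_2}{mn_1n_2},-\infty\big)=-\tfrac{m\beta_1+n_1\beta_2}{mn_1n_2}$; Proposition~\ref{Essou1a}(2) confines the poles to $\{-\tfrac{m\beta_1+n_1\beta_2+\nu}{mn_1n_2}:\nu\in\bz_{\ge0}\}$, the second candidate family in that statement being empty since $a_2=0$; and because $a_2=0$ there is no ``$y$-direction'' pole with which one could collide, so Proposition~\ref{continuation} shows each of these poles is simple. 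Summing the $R_+$- and $R_-$-contributions gives the statement.

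The step I expect to be the only non-formal one — hence the place to be careful — is the factorisation claim of the second paragraph: that the two monomial substitutions really produce honest polynomials $F\in\br[t,u]$ and $\widetilde F\in\br[w,v]$ (this is precisely the content of~\ref{G1+}--\ref{G3+}, and in particular needs $q>0$), and that $F$ and $\widetilde F$ are positive on the \emph{closed} square, not merely near $t=0$ resp.\ $w=0$ — which is where the reduction to a domain on which $f>0$ off the origin enters. Granting this, the rest is the bookkeeping of the two Jacobians and of the fact that $R_+\cup R_-=[0,1]^2$.
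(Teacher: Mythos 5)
Your argument is correct and is essentially the method of the paper (which quotes this proposition from \cite{ACLM-Yano2} and whose subsequent residue algorithm via $\tilde f$, $f_1$, $f_2$ encodes exactly this reduction): your single substitution $(x,y)=(t^m,t^{n_1}u)$ on $\{x^{n_1}\ge y^m\}$ is just the composition of the paper's two steps $f(x^m,y^{n_1})$ and $\tilde f(x,xy)=x^{mn_1n_2}f_1(x,y)$, up to the harmless reparametrization $u\mapsto u^{n_1}$, and likewise on the other region. After that, invoking Propositions~\ref{Essou1a} and~\ref{continuation} with $a_2=0$ (so the second pole family is empty and all poles are simple) is precisely the intended conclusion, with positivity on the unit square being the standing convention (B4)/Remark of the paper.
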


Next we show   the algorithmic description of \cite[Section 3]{ACLM-Yano2} to compute the residue of the corresponding  
family of poles. 
Let us see show to compute the residue at the eventual pole
$\alpha=-\frac{\beta_1m+\beta_2n_1+\nu}{mn_1n_2}$ of the integral $I_+(f,\beta_1,\beta_2)(s)$. Let
$$
\tilde{f}(x,y)=f(x^m,y^{n_1})
$$
and let $f_1$ and $f_2$ be defined by
$$
\tilde{f}(x,xy)=x^{n_1n_2m}f_1(x,y),\qquad
\tilde{f}(xy,y)=y^{n_1n_2m}f_2(x,y).
$$

Thus the residue of $\alpha=-\frac{\beta_1m+\beta_2n_1+\nu}{mn_1n_2}$ of the integral $I_+(f,\beta_1,\beta_2)(s)$ equals
\begin{equation} \label{residue-B1}
 \res_{s=\alpha} I_+(f,\beta_1,\beta_2)(s)=\frac{1}{\nu !m n_1 n_2}
(G_{ h^1_{\nu,\alpha,x}}(n_1\beta_2)+G_{ h^2_{\nu,\alpha,y}}(m\beta_1));
\end{equation}
 
where 
$$
h^1_{\nu,\alpha,x}(y)=\frac{\partial^{\nu}f_1^{\alpha}}{\partial x^{\nu}}(0,y),\qquad \text{and}\qquad 
h^2_{\nu,\alpha,y}(x)=\frac{\partial^{\nu}f_2^{\alpha}}{\partial y^{\nu}}(x,0).
$$

We define now a simplified version of polynomials of type $(n_1 n_2, mn_2,mn_2+q )^-$
defined in~\cite{ACLM-Yano2}.


\begin{dfn} 
A polynomial $f\in \br[x,y]$
is  said to be of type $(n_1 n_2, mn_2,mn_2+q )^-_s$ 
if it satisfies:
\begin{equation}\label{eq:f_gen2}
f(x,y)=g(x,y)^{n_2}+ x^a y^b+h_2(x,y)
\end{equation}
where 
$g(x,y):=x^{n_1}-y^{m}$
\begin{enumerate}
\enet{(G$^{-}$\arabic{enumi})}
\item\label{G2-} $a,b\geq 0$  are as in~\ref{G2+}.
\item\label{G4-} The polynomial $h_2\in\mathbb{R}[x,y]$, whose  support  is disjoint from the first terms, 
satisfies  that the characteristic sequence of $f$ is $CS_{n_1,m}^{n_2,q}$.
\item\label{G5-} There is an $\epsilon>0$ such that for 
$\mathcal{D}:= \{(x,y)\in\br^2\mid 0\leq x\leq\epsilon, 0\leq y\leq x^{\frac{n_1}{m}}\}$,
we have that $f> 0$ on $\mathcal{D} \setminus \{(0,0)\}$.
\end{enumerate}
\end{dfn}

\begin{prop}\label{prop:f-}
For each $f$ as in \eqref{eq:f_gen2} satisfying the conditions {\rm~\ref{G2-}}, and{\rm~\ref{G4-}}  then there is 
$\epsilon>0$ and a domain $\mathcal{D}:= \{(x,y)\in\br^2\mid 0\leq x\leq\epsilon, 0\leq y\leq x^{\frac{n_1}{m}}\}$
for which $f$ satisfies 
 {\rm~\ref{G5-}} in $D$, that is $f$  is of type $(n_1 n_2, mn_2,mn_2+q )^-_s$. 
\end{prop}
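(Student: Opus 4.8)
The plan is to reduce the statement to a statement about the leading term $g(x,y)^{n_2}=(x^{n_1}-y^m)^{n_2}$ and then to argue that the perturbation by $x^ay^b+h_2(x,y)$ does not destroy positivity on a (possibly smaller) curved triangle $\mathcal{D}$. First I would parametrize $\mathcal{D}$ by writing $y=x^{n_1/m}u$ with $u\in[0,1]$, so that on $\mathcal{D}$ one has $x^{n_1}-y^m=x^{n_1}(1-u^m)$ and hence $g(x,y)^{n_2}=x^{n_1n_2}(1-u^m)^{n_2}\geq 0$. The key observation is that this leading term is strictly positive whenever $u<1$, and vanishes only along $u=1$, i.e. along the curve $y=x^{n_1/m}$; so the danger zone is a neighbourhood of that curve inside $\mathcal{D}$, which is exactly the boundary of the region.

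Next I would split $\mathcal{D}$ into two pieces. On the piece where $u\leq 1-\delta$ for a fixed small $\delta>0$, the term $g^{n_2}$ is bounded below by $c\,x^{n_1n_2}$ for some $c=c(\delta)>0$, while the remaining terms $x^ay^b+h_2(x,y)$ are, by~\ref{G2-} and~\ref{G4-}, of order strictly larger than $x^{n_1n_2}$ along $\mathcal{D}$ (indeed $am+bn_1=mn_1n_2+q>mn_1n_2$, and the support of $h_2$ is disjoint from and ``above'' the relevant Newton data), so for $x$ small enough $f>0$ there. On the complementary piece $1-\delta<u\leq 1$, which is a thin sliver near $y=x^{n_1/m}$, I would instead compare $f$ directly to $x^ay^b$: along this sliver $y\asymp x^{n_1/m}$, so $x^ay^b\asymp x^{a+bn_1/m}=x^{(mn_1n_2+q)/m}$, which again dominates the possibly-negative contributions from $h_2$ (whose exponents are strictly larger) and also dominates $|g^{n_2}|=x^{n_1n_2}|1-u^m|^{n_2}$ once $\delta$ is chosen small, since $(mn_1n_2+q)/m<n_1n_2$ is false — so here one must be a bit careful: $g^{n_2}\geq 0$ always, so on this sliver $f\geq x^ay^b+h_2\geq x^ay^b(1-o(1))>0$. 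Thus in both regions positivity holds after shrinking $\epsilon$.

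Finally I would assemble the two estimates: choose $\delta$ first to make the sliver estimate work (this only uses $g^{n_2}\geq 0$ and the dominance of $x^ay^b$ over $h_2$), then choose $\epsilon$ small enough that both the $u\leq 1-\delta$ estimate and the $1-\delta<u\leq1$ estimate hold simultaneously for all $x\in[0,\epsilon]$. This produces the required $\epsilon$ and domain $\mathcal{D}$ on which $f>0$ away from the origin, i.e.~\ref{G5-}, so that $f$ is of type $(n_1n_2,mn_2,mn_2+q)^-_s$.

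The main obstacle I expect is the behaviour near the curve $u=1$ (equivalently $y=x^{n_1/m}$), where the dominant term $g^{n_2}$ degenerates; the whole point of using the \emph{curved} region $\mathcal{D}$ rather than a square is that on $\mathcal{D}$ we never cross to $u>1$, so $g^{n_2}$ stays non-negative and we only need $x^ay^b$ to beat $h_2$ there, which is exactly the content of~\ref{G4-} (the support of $h_2$ being disjoint from, and lying beyond, the compact edge determined by $x^ay^b$). Making this Newton-polygon dominance precise — i.e. that every monomial in $h_2$ is, along $\mathcal{D}$, of strictly higher order than $x^ay^b$ uniformly — is the one routine-but-essential verification; it follows from~\ref{G4-} together with the assumption that the characteristic sequence of $f$ is $CS_{n_1,m}^{n_2,q}$, which pins down the Newton polygon of $f$.
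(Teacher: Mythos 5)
Your reduction to the curved region and the splitting $y=x^{n_1/m}u$, $u\in[0,1]$, into the part $u\le 1-\delta$ and the sliver $1-\delta<u\le 1$ is reasonable, and the first part is fine: condition~\ref{G4-} does force every monomial $x^iy^j$ of $h_2$ to satisfy $mi+n_1j>mn_1n_2$ (a monomial below the Newton face would change the first Puiseux pair, and the lattice points on the face are already occupied by $g^{n_2}$), so there $g^{n_2}\ge c(\delta)x^{n_1n_2}$ dominates. The gap is in the sliver: your estimate needs every monomial of $h_2$ to satisfy $mi+n_1j>ma+n_1b=mn_1n_2+q$, and this is \emph{not} a consequence of~\ref{G4-}. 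Monomials of intermediate weight, $mn_1n_2<mi+n_1j<mn_1n_2+q$, are compatible with the characteristic sequence, typically when they assemble into multiples of $g$. For instance, for $(n_1,m,n_2,q)=(2,3,2,7)$ the polynomial $f=(x^2-y^3)^2+x^5y^2+c(x^2-y^3)x^2y^2$ satisfies~\ref{G2-} and~\ref{G4-} (its characteristic sequence is $(4,6,13)$ for every $c$), yet $h_2=c(x^4y^2-x^2y^5)$ has weight $16<19$. On the sliver, with $y=x^{2/3}u$, one has $x^ay^b=x^{19/3}u^2$ while $h_2=c\,x^{16/3}u^2(1-u^3)$; at points with $1-u^3=x^{1/2}$ this gives $|h_2|\asymp x^{35/6}$, which is much larger than $x^{19/3}$, so $f\ge x^ay^b\bigl(1-o(1)\bigr)$ fails, and for $c<0$ the sum $x^ay^b+h_2$ is even negative there. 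Note that deformation terms of exactly this shape, $t_\nu(x^{n_1}-y^m)^{\ell_\nu}x^{a_\nu}y^{b_\nu}$, are the ones the paper itself uses in the Appendix (proof of Claim~\ref{claim2}), so the case your argument misses is precisely the relevant one.

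The conclusion is still true on the sliver, but it cannot be obtained by discarding $g^{n_2}$ and comparing with $x^ay^b$ alone: the nonnegative term $g^{n_2}$ must absorb the intermediate terms. In the example one completes the square in $g$, $f=(g+\tfrac{c}{2}x^2y^2)^2+x^5y^2-\tfrac{c^2}{4}x^4y^4\ge x^4y^2\bigl(x-\tfrac{c^2}{4}y^2\bigr)>0$ for $0<y\le x^{2/3}$ and small $x$; in general one has to control $f$ as a polynomial in $g$ near the boundary curve, or argue as the paper does, namely compare the boundary $y=x^{n_1/m}$ of $\mathcal{D}$ with a suitable truncation of the Puiseux expansion of $f$, which for germs of the form~\eqref{eq:f_gen2} has no terms between the two characteristic terms. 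So the two-region strategy is salvageable, but the step you flagged as ``routine-but-essential'' is exactly where the argument breaks, and closing it requires this additional ingredient rather than monomial-by-monomial domination by $x^ay^b$.
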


\begin{proof}
It is enough to take a suitable truncation of a Puiseux expansion of $f$
(which has no term between the two characteristic terms).
\end{proof}

For $\beta_1,\beta_2\in\bz_{\geq 1}$,  $ \beta_3\in \mathbb{Z}_{\geq 0}$ and $f$ of type $(n_1n_2, mn_2,mn_2+q )^-_s$
we set:
\begin{equation}
\mathcal{I}_{-}(f,\beta_1,\beta_2,\beta_3)(s):=\int\!\!\!\int_{\mathcal{D}}  f(x,y)^s\, x^{\beta_1} 
y^{ \beta_2}g(x,y)^{\beta_3}\, \frac{dx}{x}\, \frac{dy}{y}.
\end{equation}

\begin{prop}[\cite{ACLM-Yano2}]\label{prop:newton2}
Let $f\in\br[x,y]$ be a polynomial of type $(n_1 n_2, mn_2,mn_2+q )^-_s$,
$\beta_1,\beta_2\in\mathbb{Z}_{\geq 1}$ and 
$\beta_3\in\mathbb{Z}_{\geq 0}$.
Then the integral $\mathcal{I}_-(f,\beta_1,\beta_2,\beta_3)(s)$ is convergent for 
$\Re{(s)}>-\frac{\beta_1m+\beta_2n_1+\beta_3mn_1}{mn_1n_2}$ and
its set of poles is contained in the set 
$$
P_{1}\cup\bigcup_{i\in\mathbb{Z}_{\geq 1} } P_{2,i}
$$
where
$$
P_{1}:=\left\{\left.-\frac{m\beta_1+n_1\beta_2+m n_1\beta_ 3+\nu}{mn_1n_2}\right|\nu\in\mathbb{Z}_{\geq0}\right\}
$$
and
$$
P_{2,i}:=\left\{\left.-\frac{n_2(m \beta_1+n_1 \beta_2+m n_1\beta_3+)+q(\beta_3+i)+\nu}{n_2(mn_1n_2+q)}\right|\nu\in\mathbb{Z}_{\geq0}\right\}
$$
The poles have at most order two. The poles may have order two at the values contained in  
$P_{1} $ and $P_{2,i}$ for some $i$.
\end{prop}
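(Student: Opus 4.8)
The plan is to reduce the two–variable integral $\mathcal{I}_-(f,\beta_1,\beta_2,\beta_3)(s)$ over the triangular-like domain $\mathcal{D}$ to a product of one–variable situations, exactly as was done for $I_+$ before Proposition~\ref{prop:newton1}, and then invoke the analytic results of \S\ref{sec:res1} (Propositions~\ref{Essou1a} and~\ref{continuation}, and Lemma~\ref{beta}) to locate the poles. First I would perform the monomial substitution adapted to the first characteristic pair: set $x=u^{m}$, $y=u^{n_1}v$ so that $\mathcal{D}$ becomes $\{0\le u\le\epsilon^{1/m},\ 0\le v\le 1\}$, and $g(x,y)=x^{n_1}-y^m=u^{mn_1}(1-v^m)$. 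Writing $\tilde f(x,y)=f(x^m,y^{n_1})$ and $\tilde f(u^m,u^{n_1}v)=u^{mn_1n_2\cdot m/\gcd}$-type factorization, one extracts the leading power of $u$; the exponent of $u$ in the integrand (after accounting for the Jacobian $m n_1\,\frac{du}{u}\frac{dv}{v}$ coming from $\frac{dx}{x}\frac{dy}{y}$, and the weights $x^{\beta_1}y^{\beta_2}g^{\beta_3}$) is precisely $mn_1n_2\,s+m\beta_1+n_1\beta_2+mn_1\beta_3$, which accounts for the family $P_1$. Convergence for $\Re(s)>-\frac{m\beta_1+n_1\beta_2+mn_1\beta_3}{mn_1n_2}$ is then immediate from positivity of $f$ on $\mathcal{D}$ and the explicit $u$–exponent.

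Next I would analyze the remaining integral in $v$ near the two boundary endpoints $v=0$ and $v=1$. Near $v=0$ the only singular behavior comes from the weight $y^{\beta_2}$ and gives no new poles once the $u$–integration has been resolved, feeding only into $P_1$. The essential point is the endpoint $v=1$: here $g=u^{mn_1}(1-v^m)$ vanishes, so the second characteristic term $x^ay^b$ of $f$ becomes comparable to $g^{n_2}$, and a \emph{second} blow-up is needed. Concretely one changes variables in a neighborhood of $v=1$, writing $1-v^m = w$ (or an analogous chart resolving the intersection of $\{g=0\}$ with the exceptional divisor), and one sees the polynomial $f$ reorganize along the \emph{second} Puiseux pair, with the relevant weight $n_2(m\beta_1+n_1\beta_2+mn_1\beta_3)+q(\beta_3+i)+\nu$ appearing after expanding $g^{\beta_3}\cdot(\text{contribution of }x^ay^b)$ in powers of $w$; the index $i\ge 1$ records which power of $(1-v^m)$ in the Taylor expansion of $f^s$ one is extracting, which is why we get a family $P_{2,i}$ for each $i$. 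After these two successive toric changes of coordinates the integral is a finite sum of integrals of the form treated in Proposition~\ref{Essou1a}, each of whose poles are simple or of order two and lie in an arithmetic progression; reading off the progressions yields $P_1$ and the $P_{2,i}$.

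The order-of-poles statement then follows the same bookkeeping as in Proposition~\ref{continuation}: a pole of $\mathcal{I}_-$ at $\alpha$ is simple unless $\alpha$ lies \emph{simultaneously} in two of the arithmetic progressions produced by the two endpoints (equivalently, unless after the change of variables the point $\alpha$ is a ``crossing'' pole in the Mellin-type integral), in which case the two simple factors can combine into a double pole; since all progressions here are subsets of $P_1$ or of some $P_{2,i}$, the order-two poles can only occur at values in $P_1$ or in $P_{2,i}$. I would phrase this by applying Proposition~\ref{continuation} chart by chart and noting that the off-diagonal (non-coincident) poles are simple, while the diagonal ones acquire order at most two.

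The main obstacle I anticipate is the careful resolution at the second endpoint $v=1$: one must verify that after the substitution $g(x,y)=u^{mn_1}(1-v^m)$ and a further blow-up, the term $x^ay^b$ really does produce exactly the divisor with weight $mn_1n_2+q$ and that the hypothesis~\ref{G4-} (the support of $h_2$ being disjoint from the two characteristic terms, and the characteristic sequence being $CS_{n_1,m}^{n_2,q}$) guarantees that no further divisors — hence no further families of poles — are created and that $f^s$ can be expanded as a convergent series in $(1-v^m)$ with positive leading coefficient on the relevant chart. Establishing positivity of $f$ on the resolved domain, so that the beta-type estimates of Lemma~\ref{beta} and the convergence statements after Proposition~\ref{continuation} apply, is exactly condition~\ref{G5-}, and tracking it through the two substitutions is the delicate part; the rest is the same termwise Mellin analysis already used for $I_+$.
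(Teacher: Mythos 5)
Your proposal is correct and follows essentially the same route as the source: the paper only quotes this statement from \cite{ACLM-Yano2}, but the underlying argument there is exactly your chain of monomial substitutions and blow-ups adapted to the two Puiseux pairs (the maps $f\mapsto\tilde f\mapsto\tilde{\tilde f}\mapsto\hat f\mapsto\hat{\hat f}\mapsto f_1,f_2$ displayed after the Proposition), the first stage producing the progression $P_1$ and the translation to $v=1$ followed by the $(n_2,q)$-weighted resolution producing the $P_{2,i}$, with convergence and the order-at-most-two statement read off from the one-variable Mellin analysis of Propositions~\ref{Essou1a} and~\ref{continuation}. Apart from harmless bookkeeping slips (e.g.\ with $x=u^m$, $y=u^{n_1}v$ the Jacobian factor is $m\,\frac{du}{u}\frac{dv}{v}$, not $mn_1$), your outline matches the paper's method.
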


We shall give the residues at the eventual simple poles in $P_{2,i}$.
Let $\tilde{f},\tilde{\tilde{f}},\hat{f}$ be defined by 
$$
f(x^m,y^{n_1})=\tilde{f}(x,y),\quad 
\tilde{f}(x,xy)=x^{mn_1n_2}\tilde{\tilde{f}}(x,y),\quad
\hat{f}(x,y)=\tilde{\tilde{f}}(x,1-y).
$$
Let $\hat{\hat{f}},f_1,f_2$ be defined by 
$$
\hat{f}(x^{n_2},y^q)=\hat{\hat{f}}(x,y),\quad 
\hat{\hat{f}}(x,xy)=x^{n_2q}f_1(x,y),\quad 
\hat{\hat{f}}(xy,y)=y^{n_2q}f_2(x,y).
$$
Let us denote
$$
g(x^m,y^{n_1})=\tilde{g}_Y(x,y)=x^{m n_1}-y^{m n_1},\qquad 
\tilde{\tilde{g}}(y)=\frac{\tilde{g}(x,xy)}{x^{mn_1}}=1-y^{m n_1}.
$$
In particular, 
$$
\tilde{\tilde{g}}(1-y)=y Q(y),\quad Q(0)=n_1.
$$
Let us define
$$
\tilde{Q}(y)=Q(y)^{\beta_3}(1-y)^{n_1\beta_2-1},\quad
\tilde{Q}(y)=\sum b_{i}y^{i-1}.
$$
Thus  the integral $\mathcal{I}_-(f,\beta_1,\beta_2,\beta_3)(s)$
has residue for 
$$
s=\alpha=-\frac{n_2(m \beta_1+n_1 \beta_2+m n_1\beta_3)+q(\beta_3+i)+\nu}{n_2(mn_1n_2+q)}
$$ 
equals
\begin{equation}\label{residue-B2}
\begin{gathered}
 \res_{s=\alpha} \mathcal{I}_-(f,\beta_1,\beta_2,\beta_3)(s)=
\frac{1}{n_2q}\sum_{i,\nu} \frac{1}{\nu!} b_{i} 
(G_{h^1_{\nu,\alpha,x}}(q(\beta_3+i))+\\
G_{h^2_{\nu,\alpha,y}}(n_2(mn_1n_2\alpha+m\beta_1+n_1\beta_2+mn_1\beta_3))
\end{gathered}
\end{equation}
where 
$$
h^1_{\nu,\alpha,x}(y)=\frac{\partial^{\nu}f_1^{\alpha}}{\partial x^{\nu}}(0,y),\qquad \text{and} \qquad
h^2_{\nu,\alpha,y}(x)=\frac{\partial^{\nu}f_2^{\alpha}}{\partial y^{\nu}}(x,0);
$$
recall also that $G_f(s)$ is the meromorphic continuation of 
$
\int_0^1 f(t)t^s\frac{dt}{t}.
$

\begin{remark}
We may assume $\epsilon=1$ after a suitable change of variables.
\end{remark}

 Let us summarize the links between these integrals and the Bernstein polynomial.
We are using ideas in \cite{Pi862,Pi861,Pi872,ACLM-Yano2}. 
Let us fix notations that may cover both cases.
We fix $f$ with the following properties:
\begin{enumerate}
\enet{(B\arabic{enumi})}
\item The characteristic sequence of $f\in\br[x,y]$ is $CS_{n_1,m}^{n_2,q}$.
\item The polynomial $Y(x^{\frac{1}{m}})\in\br[x^{\frac{1}{m}}]$  is either~$1$ (for the $+$-case)
or $x^{\frac{n_1}{m}}$ for the $-_s$-case
\item $\mathcal{D}:=\{(x,y)\in\br^2\mid 0\leq x\leq 1,0\leq y\leq Y(x^{\frac{1}{m}})\}$, $g(x,y)=x^{n_1}\pm y^m$.
\item $f(x,y)>0$ $\forall (x,y)\in\mathcal{D}\setminus\{(0,0)\}$.
\end{enumerate}

Let $\beta_1,\beta_2\in\mathbb{Z}_{\geq 1}$ and $\beta_3\in\mathbb{Z}_{\geq 0}$ (equals~$0$ for the $+$-case). 
Let us consider
the integral
\begin{equation}
\mathcal{I}_\pm(f,\beta_1,\beta_2,\beta_3)(s):=
\int\!\!\!\int_{\mathcal{D}}  f(x,y)^s\, x^{\beta_1} y^{\beta_2}\, g(x,y)^{\beta_3}\frac{dx}{x}\, \frac{dy}{y}.
\end{equation}
\begin{thm}[{\cite[Theorem 5.3]{ACLM-Yano2}}]\label{pole-integral-root} Let $f(x,y)\in \mathbb{K}[x,y]$ be a polynomial defining an  irreducible germ of complex plane curve at the origin 
which has two Puiseux pairs and its algebraic monodromy has distinct eigenvalues and such that 
$\mathbb{K}$ is an algebraic extension of~$\mathbb{Q}$. 
Let $\alpha$ be a pole of  $\mathcal{I}_{\pm}(f,\beta_1,\beta_2,\beta_3)(s)$  with transcendental residue,  
and such that $\alpha+1$ is not a pole of $\mathcal{I}_{\pm}(f,\beta'_1,\beta'_2,\beta'_3)(s)$ 
for any $(\beta'_1,\beta'_2,\beta'_3)$. Then $\alpha$ is root of
the Bernstein-Sato polynomial  $b_{f}(s)$ of $f$.
\end{thm}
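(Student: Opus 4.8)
The plan is to exploit the functional equation~\eqref{berstein-rel} directly, pairing it against the measure that defines $\mathcal{I}_{\pm}$. Let $P(s,x,D)$ and $B(s)=b_f(s)$ be as in~\eqref{berstein-rel}, so that $P(s,x,D)\cdot f^{s+1}=B(s)\,f^s$ holds in $R_{f,s}f^s$, and moreover — since $f\in\mathbb{K}[x,y]$ with $\mathbb{K}$ an algebraic extension of $\mathbb{Q}$ — the operator $P$ and the polynomial $B$ can be chosen with coefficients in $\mathbb{K}$; this is the content of the polynomial version of the Bernstein--Sato theorem cited as \cite[Theorem 3.3]{Co95}. First I would multiply the identity $B(s)f(x,y)^{s}=P(s,x,D)\cdot f(x,y)^{s+1}$ by the monomial $x^{\beta_1}y^{\beta_2}g(x,y)^{\beta_3}$ and integrate over the domain $\mathcal{D}$. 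On the left this produces $B(s)\,\mathcal{I}_{\pm}(f,\beta_1,\beta_2,\beta_3)(s)$. On the right, integrating by parts to move the differential operator $P$ off $f^{s+1}$ and onto the monomial weight $x^{\beta_1}y^{\beta_2}g^{\beta_3}$ expresses the right-hand side, for $\Re(s)$ large, as a finite $\mathbb{K}[s]$-linear combination $\sum_j c_j(s)\,\mathcal{I}_{\pm}(f,\beta'_{1,j},\beta'_{2,j},\beta'_{3,j})(s+1)$ of integrals of the \emph{same} shape but with shifted exponents, with coefficients $c_j(s)\in\mathbb{K}[s]$. The only subtlety here is the boundary: on $\partial\mathcal{D}$ the weight $x^{\beta_1}y^{\beta_2}$ vanishes to order $\geq 1$ (since $\beta_1,\beta_2\geq 1$) along the coordinate axes, and $f>0$ on $\mathcal{D}\setminus\{0\}$ by hypothesis (B4), so all boundary terms produced by repeated integration by parts vanish; the resulting identity of meromorphic functions then holds on all of $\mathbb{C}$ by analytic continuation (Propositions~\ref{prop:newton1} and~\ref{prop:newton2}).

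The heart of the argument is then a pole-order comparison at $s=\alpha$. By hypothesis, $\alpha$ is a pole of $\mathcal{I}_{\pm}(f,\beta_1,\beta_2,\beta_3)(s)$ with \emph{transcendental} residue; in particular the residue is non-zero, so the left-hand side $B(s)\,\mathcal{I}_{\pm}(\cdots)(s)$ has a pole at $\alpha$ \emph{unless} $B(\alpha)=0$. Suppose for contradiction that $B(\alpha)\neq 0$, i.e.\ $\alpha$ is not a root of $b_f(s)$. Then the left-hand side has a genuine pole at $\alpha$, with leading Laurent coefficient $B(\alpha)\cdot\bigl(\res_{s=\alpha}\mathcal{I}_{\pm}(\cdots)(s)\bigr)$ if the pole of $\mathcal{I}_{\pm}$ is simple, or more generally $B(\alpha)$ times the corresponding principal part; crucially, because $B$ has coefficients in $\mathbb{K}\subset\overline{\mathbb{Q}}$ and $\alpha\in\mathbb{Q}$, the number $B(\alpha)$ is algebraic, hence the leading Laurent coefficient of the left-hand side at $\alpha$ is transcendental (an algebraic non-zero multiple of a transcendental number). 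On the other hand, by the assumption that $\alpha+1$ is \emph{not} a pole of $\mathcal{I}_{\pm}(f,\beta'_1,\beta'_2,\beta'_3)(s)$ for \emph{any} triple $(\beta'_1,\beta'_2,\beta'_3)$, each integral $\mathcal{I}_{\pm}(f,\beta'_{1,j},\beta'_{2,j},\beta'_{3,j})(s+1)$ appearing on the right-hand side is holomorphic at $s=\alpha$, and its value there is an \emph{absolutely convergent} integral of the form $\int\!\!\int_{\mathcal{D}} f(x,y)^{\alpha+1}x^{\beta'_1}y^{\beta'_2}g(x,y)^{\beta'_3}\,\tfrac{dx}{x}\tfrac{dy}{y}$. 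So the right-hand side is holomorphic at $\alpha$ and its value there is $\sum_j c_j(\alpha)\cdot(\text{that convergent integral})$, with $c_j(\alpha)\in\mathbb{K}$ algebraic. This contradicts the left-hand side having a non-removable (indeed transcendental-leading-coefficient) pole at $\alpha$ — unless that value itself is transcendental, which would still be incompatible with the left side having a \emph{pole} rather than a finite value. Hence $B(\alpha)=0$, i.e.\ $\alpha$ is a root of $b_f(s)$.

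Let me restate the logical skeleton cleanly, since the above mixes two cases. The identity reads
\begin{equation}\label{eq:key-id}
b_f(s)\,\mathcal{I}_{\pm}(f,\beta_1,\beta_2,\beta_3)(s)=\sum_{j} c_j(s)\,\mathcal{I}_{\pm}(f,\beta'_{1,j},\beta'_{2,j},\beta'_{3,j})(s+1),\qquad c_j(s)\in\mathbb{K}[s].
\end{equation}
Taking Laurent expansions at $s=\alpha$: the right-hand side is holomorphic there (no integral on the right has a pole at $\alpha$, by hypothesis), so it equals some finite value, say $V=\sum_j c_j(\alpha)\,\mathcal{I}_{\pm}(f,\beta'_{1,j},\cdots)(\alpha+1)$. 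If $b_f(\alpha)\neq 0$ then the left-hand side has, at $\alpha$, a pole of the same order $\geq 1$ as $\mathcal{I}_{\pm}(f,\beta_1,\beta_2,\beta_3)(s)$, which is impossible since it must equal the holomorphic $V$. Therefore $b_f(\alpha)=0$. (The transcendence of the residue is not even needed for this contradiction — holomorphy on the right versus a pole on the left already does it; transcendence is what guarantees the residue is \emph{non-zero}, i.e.\ that the pole is genuine, which is the one place it enters.)

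\textbf{Main obstacle.} The delicate point, and the one I expect to require the most care, is making the integration-by-parts step rigorous: one must check that the operator $P(s,x,D)$, applied under the integral sign on the domain-with-boundary $\mathcal{D}$, produces no surviving boundary contributions, and that after transposing $P$ onto the monomial weight one genuinely lands inside the finite-dimensional family of integrals $\{\mathcal{I}_{\pm}(f,\beta'_1,\beta'_2,\beta'_3)\}$ rather than something more general (this uses that $\partial_x,\partial_y$ applied to $x^{\beta_1}y^{\beta_2}g^{\beta_3}$ again produces $\mathbb{K}$-linear combinations of such monomials times $g$-powers, shifting the $\beta$'s by bounded amounts, and that derivatives falling on $f^{s+1}$ can be rewritten via $\partial_x f^{s+1}=(s+1)f^s\,\partial_x f$ so that the exponent of $f$ is controlled). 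Once the identity~\eqref{eq:key-id} is established with $\mathbb{K}$-coefficients and the correct shift $s\mapsto s+1$, the pole-comparison conclusion is immediate. This is exactly the mechanism of \cite[Theorem 5.3]{ACLM-Yano2}, which we may invoke directly; the statement to be proved is that very theorem restated, so the proof reduces to citing it once the hypotheses (irreducible two-Puiseux-pair germ, distinct monodromy eigenvalues, $\mathbb{K}/\mathbb{Q}$ algebraic, $\alpha$ a transcendental-residue pole, $\alpha+1$ not a pole) are matched.
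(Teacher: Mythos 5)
Your central step is not sound as stated. The contradiction you run (pole on the left, holomorphy on the right) rests on the claim that after transposing $P(s,x,D)$ across the integral the right-hand side becomes exactly a $\mathbb{K}[s]$-combination of integrals $\mathcal{I}_{\pm}(f,\beta'_1,\beta'_2,\beta'_3)(s+1)$, with all boundary terms vanishing. That is false in general: the weight is $x^{\beta_1-1}y^{\beta_2-1}g^{\beta_3}\,dx\,dy$, so already for $\beta_1=1$ (and inevitably after repeated integrations by parts lower the exponents) nothing vanishes on $x=0$; nothing vanishes on the far sides $x=1$, $y=1$, nor on the curved boundary $y=x^{n_1/m}$ in the $-$ case when $\beta_3=0$; and the transposed terms need not stay inside the family with $\beta'_1,\beta'_2\geq 1$, $\beta'_3\geq 0$. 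The boundary pieces away from the origin only contribute entire functions, which is harmless, but the pieces through the origin contribute one-variable integrals of the type $\int_0^1 f(0,y)^{s+1-k}(\cdots)\,y^{c-1}dy$, and since $f(0,y)$ vanishes at $y=0$ these have their own poles on arithmetic progressions that can perfectly well pass through $\alpha$. So the right-hand side need not be holomorphic at $s=\alpha$, and your pole-versus-holomorphy argument does not close. Your own aside — that transcendence of the residue is ``not even needed'' except to ensure nonvanishing — is the clearest symptom that the mechanism has been misidentified: if that were the mechanism, the theorem would have been stated with ``nonzero residue''.

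The actual mechanism (this is \cite[Theorem 5.3]{ACLM-Yano2}, which the present paper does not reprove but simply invokes) uses the functional equation \eqref{berstein-rel} in the way you propose, but then takes residues at $s=\alpha$ and exploits arithmetic, not holomorphy: the terms $\mathcal{I}_{\pm}(\cdot)(s+1)$ are regular at $\alpha$ by the hypothesis on $\alpha+1$, while the leftover boundary/remainder contributions may well have poles at $\alpha$ — but their residues are \emph{algebraic} numbers, because $f\in\mathbb{K}[x,y]$ with $\mathbb{K}\subset\overline{\mathbb{Q}}$, $\alpha\in\mathbb{Q}$, and such residues are Taylor coefficients at $0$ of rational powers of series with algebraic coefficients (compare Proposition~\ref{continuation}). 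One thus obtains $b_f(\alpha)\cdot\res_{s=\alpha}\mathcal{I}_{\pm}(f,\beta_1,\beta_2,\beta_3)(s)\in\overline{\mathbb{Q}}$, and the transcendence of the residue forces $b_f(\alpha)=0$. So the skeleton (functional equation, shift $s\mapsto s+1$, hypothesis that $\alpha+1$ is not a pole) is the right one, but the missing idea — algebraicity of the residues of all extra terms, which is where $\mathbb{K}/\mathbb{Q}$ algebraic and the transcendence hypothesis do their real work — is precisely what your proposal replaces by an unjustified vanishing/holomorphy claim; as written the proof has a genuine gap.
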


\section{\texorpdfstring{Determination of the set of common roots of the $\mu$-constant stratum}{Determination 
of the set of common roots of the mu-constant stratum}}\label{common-roots}
Let $f$ be an irreducible germ of plane curve whose characteristic sequence is~$CS_{n_1,m}^{n_2,q}$
satisfying~\eqref{eq:simple_roots}.
The Bernstein-Sato polynomial of a germ~$f$ with this 
characteristic sequence, depends on~$f$, but there is a \emph{generic} Bernstein polynomial~$b_{\mu, \text{gen}}(s)$: 
for every $\mu$-constant deformation of such an~$f$, there is a Zariski dense open set\ $\mathcal{U}$ on which the Bernstein-Sato polynomial
of any germ in~$\mathcal{U}$ equals $b_{\mu,gen}(s)$. 

\begin{prop}[{\cite[Corollary 21]{V80}}]\label{semi} Let $f_t(x)$  be a $\mu$-constant analytic deformation of  an  
isolated hypersurface singularity $f_0(x)$. 
If all  eigenvalues of  the monodromy are pairwise different, then all roots
of the reduced Bernstein-Sato polynomial $\tilde{b}_{f_t}(s)$ depend lower semi-continously upon the parameter $t$.
\end{prop}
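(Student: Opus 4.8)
The plan is to recover Varchenko's statement through the saturated Brieskorn lattice, combining Malgrange's identification of $\tilde b_{f,0}(s)$ recalled above with the constancy of the spectrum along $\mu$-constant deformations \cite{V82}.

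First I would fix the $\mu$-constant family $f_t$, $t$ ranging over a (reduced, irreducible) base $T$, with $f_0$ the central fibre; since the monodromy is locally constant along $T$, all the $f_t$ have monodromy with the same $\mu$ pairwise distinct eigenvalues $\lambda_1,\dots,\lambda_\mu$. For each $t$ the reduced Bernstein polynomial $\tilde b_{f_t}(s)$ then has degree $\mu$, its roots are the opposites $-\beta_1(t),\dots,-\beta_\mu(t)$ of the $b$-exponents, and these are pairwise distinct; moreover $\beta_i(t)$ is canonically the unique $b$-exponent with $\exp(-2\pi i\beta_i(t))=\lambda_i$, so there is a labelling of the roots $\rho_i(t):=-\beta_i(t)$ that does not depend on $t$, and no ordering ambiguity arises. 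By the relation between $b$-exponents and spectral numbers (Proposition~\ref{sp-b-exp:hs}(1), or more intrinsically $H_0''\subseteq\tilde{H}_0''$) one has $\beta_i(t)=\alpha_i-k_i(t)$, where $\alpha_i$ is the spectral number in the residue class of $\lambda_i$ — constant along $T$ because the spectrum is \cite{V82} — and $k_i(t)\in\bz_{\geq 0}$. So the whole statement reduces to showing that each $k_i\colon T\to\bz_{\geq 0}$ is lower semicontinuous.

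The core of the argument is to exhibit $k_i(t)$ as the fibre rank of a morphism of $\mathcal{O}_T$-modules. I would work with the relative (microlocal) Gauss--Manin system $\mathcal{G}$ of the family — a free $\mathcal{O}_T\{\{\partial_t^{-1}\}\}$-module of rank $\mu$ with its $\partial_t t$-action — and the relative Brieskorn lattice $\mathcal{H}''\subseteq\mathcal{G}$, which is $\mathcal{O}_T$-flat with fibre $H''_{0,t}$ at $t$ precisely because the family is $\mu$-constant. Constancy of the monodromy provides a decomposition $\mathcal{G}=\bigoplus_{i=1}^{\mu}\mathcal{G}_{\lambda_i}$ into rank-one summands independent of $t$; and since any $\mathcal{O}_T\{\{\partial_t^{-1}\}\}[\partial_t t]$-module respects the generalized $\partial_t t$-eigendecomposition, the saturation splits, $\tilde{\mathcal{H}}''=\bigoplus_i\pi_{\lambda_i}(\mathcal{H}'')$, with $\pi_{\lambda_i}$ the ($\mathcal{O}_T$-linear) projection onto $\mathcal{G}_{\lambda_i}$. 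Fibrewise this says $\tilde{H}''_{0,t}=\bigoplus_i\pi_{\lambda_i}(H''_{0,t})$, and $\pi_{\lambda_i}(H''_{0,t})$ is the rank-one sublattice of $\mathcal{G}_{t,\lambda_i}$ of exponent $\beta_i(t)$; it contains the fixed sublattice $L_i\subset\mathcal{G}_{\lambda_i}$ of exponent $\alpha_i$ (since $\beta_i(t)\le\alpha_i$, and a smaller exponent means a larger lattice). Hence $k_i(t)=\dim_{\bc}\bigl(\pi_{\lambda_i}(H''_{0,t})/L_i\bigr)$; fixing a uniform bound $K\geq k_i(t)$ (possible since all $b$-exponents in the family lie in one fixed finite set), $\pi_{\lambda_i}(H''_{0,t})/L_i$ sits inside a fixed finite-dimensional free $\mathcal{O}_T$-piece of $\mathcal{G}_{\lambda_i}/L_i$, and $k_i(t)$ is exactly the rank at $t$ of the $\mathcal{O}_T$-linear map $\psi_i\colon\mathcal{H}''\xrightarrow{\ \pi_{\lambda_i}\ }\mathcal{G}_{\lambda_i}\twoheadrightarrow\mathcal{G}_{\lambda_i}/L_i$ viewed into that piece.

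Finally, the fibre rank of a morphism of free $\mathcal{O}_T$-modules is lower semicontinuous — the locus $\{t:\operatorname{rank}\psi_{i,t}\geq r\}$ is Zariski open, being cut out by non-vanishing of $r\times r$ minors — so $k_i$, and with it every root $\rho_i(t)$ of $\tilde b_{f_t}(s)$, is lower semicontinuous; summing over $i$ this also recovers that $\dim\tilde{H}_0''/H_0''=\sum_i k_i(t)$ is generically maximal, in accordance with the bounds of \S\ref{sec:bounds}. The step I expect to be the real obstacle is not this last rank computation but the set-up in the previous paragraph: constructing the relative microlocal Brieskorn lattice $\mathcal{H}''\subseteq\mathcal{G}$ over $T$, proving its $\mathcal{O}_T$-flatness and the base-change identity $(\mathcal{H}'')_t=H''_{0,t}$ from $\mu$-constancy, checking that the $\partial_t t$-eigendecomposition is trivial over $T$ (constancy of the monodromy and of the spectrum), and ensuring that $\pi_{\lambda_i}$ and the passage to the image are compatible with restriction to fibres. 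Granting this standard but technical package, the semicontinuity becomes the elementary statement that ranks drop on closed sets.
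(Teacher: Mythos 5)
You should first note that the paper contains no proof of this proposition: it is quoted from Varchenko \cite{V80} (Corollary 21), so there is no internal argument to measure yours against, only the literature. With that caveat, your outline follows what is essentially the known route (Varchenko's Gauss--Manin argument, later systematized along Brieskorn-lattice lines by Hertling--Stahlke): Malgrange's description of $\tilde b_{f}$ via $-\partial_t t$ on $\tilde H_0''/\partial_t^{-1}\tilde H_0''$, the decomposition into $\mu$ rank-one eigen-summands (available because the eigenvalues are pairwise distinct, hence the monodromy is semisimple), a fixed reference lattice $L_i$ of exponent $\alpha_i$ in each summand (constancy of the spectrum, \cite{V82}), and the identification of $k_i(t)=\alpha_i-\beta_i(t)$ with a fibre rank, which is lower semicontinuous. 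The directional content is the right one and is consistent with how the paper later uses the result (Corollary~\ref{varchenko-gen}, Proposition~\ref{prop:cota1}, and the $(8,10,10+q)$ example, where the generic root $-\frac{11}{40}$ degenerates to $-\frac{51}{40}$): $b$-exponents can only jump up, i.e.\ roots can only drop, at special parameters.

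As a proof, however, the attempt is incomplete exactly where you park the ``standard but technical package'', and that package is not peripheral: it is the substance of Varchenko's theorem. The existence, coherence/flatness and base-change property of the relative (saturated) Brieskorn lattice over $T$ for a $\mu$-constant family, and the $t$-independent trivialization of the eigen-decomposition and of the elementary sections defining $L_i$, are precisely what \cite{V80} establishes (there phrased as holomorphic dependence on $t$ of the geometric sections and of their principal parts, whose orders are your $k_i$); without it, the reduction to ``rank of a morphism of free $\mathcal{O}_T$-modules'' is a reformulation rather than a proof. Two smaller points to tighten: the splitting $\tilde{\mathcal{H}}''=\bigoplus_i\pi_{\lambda_i}(\mathcal{H}'')$ is not a formal consequence of ``modules respect the eigendecomposition'' --- you need both that in a rank-one summand with semisimple monodromy every lattice is automatically $\partial_t t$-stable, and that the spectral projectors preserve the saturation (they act through the monodromy, which stabilizes any saturated lattice); and Proposition~\ref{sp-b-exp:hs} is stated in this paper only for irreducible plane curves, whereas the proposition at hand concerns arbitrary isolated hypersurface singularities, so the inequality $\beta_i(t)\le\alpha_i$ should be derived from $H_0''\subseteq\tilde H_0''$ (a larger lattice has smaller exponents in each eigen-class), as you parenthetically suggest.
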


\begin{prop} [{\cite[Corollary 5.1]{HS99}},{\cite{GH07}}]   \label{mu-cons}
Let $f(x)$  be a germ of an 
isolated hypersurface singularity. 
Then for each spectral number $\alpha \in\spec(f)$ such that  $\alpha< \alpha_1 +1,$ 
then 
$-\alpha$ is root of the Bernstein polynomial ${b}_{f}(s)$.

Consequently, for a $\mu$-constant analytic deformation $f_t(x)$  of  an  
isolated hypersurface singularity germ $f_0(x)$, 
for every $\alpha$ in
$$
\cA:=\{\alpha: \alpha \in\spec(f)\text{ and }\alpha< \alpha_1 +1 \}
$$
then 
$-\alpha$ is root of every    Bernstein polynomial ${b}_{f_t}(s)$ for every $t$.
\end{prop}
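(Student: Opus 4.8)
The plan is to establish the first assertion — that for every spectral number $\alpha\in\spec(f)$ with $\alpha<\alpha_1+1$ the number $-\alpha$ is a root of $b_f(s)$ — and then to deduce the deformation statement from Proposition \ref{semi} (lower semicontinuity of the roots of the reduced Bernstein polynomial under $\mu$-constant deformation) together with the symmetry and constancy of the spectrum. For the first assertion, I would invoke the relation between spectral numbers and $b$-exponents recorded in Proposition \ref{sp-b-exp:hs}\ref{sp-b-exp1}: given $\alpha_k\in\spec(f)$ there is a $b$-exponent $\beta_k\in B_f$ with $\alpha_k-\beta_k\in\{0,1\}$ an integer, and $0\le \alpha_k-\beta_k\le 1$. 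Since $\beta_k\ge \min B_f=\alpha_1$ by part (2) of that proposition, and $\alpha_k<\alpha_1+1$, the inequality $\beta_k\le \alpha_k<\alpha_1+1\le \beta_k+1$ forces $\alpha_k-\beta_k=0$, i.e.\ $\alpha_k=\beta_k$ is itself a $b$-exponent. It then remains to pass from ``$\alpha$ is a $b$-exponent'' to ``$-\alpha$ is a root of $b_f(s)$''; this is where one uses that $-\partial_t t$ acts on $F=\tilde H_0''/\partial_t^{-1}\tilde H_0''$ with the $b$-exponents (up to sign) as eigenvalues and with $\tilde b_{f,0}(s)$ as minimal polynomial (Malgrange's theorem), so every eigenvalue $-\alpha$ of $\partial_t t$ with $\alpha$ a $b$-exponent that is \emph{minimal} in its class modulo $\bz$ among the $b$-exponents appearing is a root of $\tilde b_{f,0}(s)$, hence of $b_{f,0}(s)$. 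The condition $\alpha<\alpha_1+1$ is exactly what guarantees this minimality, so no cancellation in the minimal polynomial can occur.

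Alternatively, and perhaps more cleanly, I would quote the cited results of Hertling--Stahlke \cite{HS99} (Corollary 5.1) and Gyoja / \cite{GH07} directly for the first assertion, since the excerpt explicitly allows assuming results stated earlier and the proposition is attributed to them; in that case the proof of the first sentence is a one-line citation. I would nonetheless include the short argument above via Proposition \ref{sp-b-exp:hs} as a self-contained justification, since all its ingredients are already on the table.

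For the ``consequently'' part, fix a $\mu$-constant deformation $f_t$ of $f_0$. The spectrum is constant along a $\mu$-constant deformation (stated in the excerpt, following Varchenko \cite{V82}), so $\spec(f_t)=\spec(f_0)$ and in particular $\alpha_1$ and the set $\cA=\{\alpha\in\spec(f_0): \alpha<\alpha_1+1\}$ do not depend on $t$. For a fixed $t$, the first assertion applied to $f_t$ gives that $-\alpha$ is a root of $b_{f_t}(s)$ for every $\alpha\in\cA$; since $t$ was arbitrary, this holds for all $t$, which is the claim. (One does not even need the semicontinuity Proposition \ref{semi} for this direction — it is used elsewhere to control the roots that do \emph{not} come from $\cA$ — but if one prefers a uniform statement, Proposition \ref{semi} shows these roots persist on the whole family rather than being checked pointwise.)

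The main obstacle is the passage from ``$\alpha$ is a $b$-exponent'' to ``$-\alpha$ is a root of $b_{f,0}(s)$'': a priori the characteristic polynomial of $\partial_t t$ on $F$ (whose roots are the $b$-exponents) and the minimal polynomial $\tilde b_{f,0}(s)$ differ, so not every $b$-exponent need be a root of the Bernstein polynomial — the hypothesis $\alpha<\alpha_1+1$ is precisely the device that rules out the loss of such an eigenvalue when passing to the minimal polynomial, because any other eigenvalue congruent to $-\alpha$ mod $\bz$ would correspond to a $b$-exponent $\le \alpha-1<\alpha_1$, contradicting $\min B_f=\alpha_1$. Making this ``no collision modulo $\bz$'' argument precise (in the non-semisimple case, checking it at the level of Jordan blocks of $\partial_t t$) is the one point that needs care; everything else is citation and bookkeeping.
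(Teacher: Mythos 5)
Your proposal is correct in substance, but note first that the paper contains no proof of Proposition~\ref{mu-cons} at all: it is imported directly from \cite[Corollary 5.1]{HS99} and \cite{GH07}, and the ``consequently'' clause is, exactly as you say, nothing more than the constancy of $\spec(f_t)$ (hence of $\cA$) along a $\mu$-constant deformation \cite{V82} combined with the first clause applied to each $f_t$; Proposition~\ref{semi} is not needed for it. So your fallback (pure citation) coincides with the paper's route, while your self-contained derivation of the first clause from Proposition~\ref{sp-b-exp:hs} is a genuine alternative, and the inequality argument ($\beta_k\geq\alpha_1$, $\alpha_k<\alpha_1+1$, $\alpha_k-\beta_k\in\bz_{\geq0}$ forces $\alpha_k=\beta_k$) is fine. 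Two caveats about that derivation. First, Proposition~\ref{sp-b-exp:hs} is stated in the paper only for irreducible plane curve germs, whereas Proposition~\ref{mu-cons} is asserted for arbitrary isolated hypersurface singularities; the inputs you need ($\alpha_k-\beta_k\in\bz_{\geq 0}$ and $\min B_f=\min\spec(f)=\alpha_1$) do hold in general, but to invoke them in that generality you are again quoting \cite{HS99} (or M.~Saito), so as written your argument only covers the curve case actually used later in the paper. Second, the ``main obstacle'' you single out is not there: by Malgrange's theorem $\tilde{b}_{f,0}(s)$ is the minimal polynomial of $-\partial_t t$ on $F$, the $b$-exponents are the roots of the characteristic polynomial of $\partial_t t$ on the same finite-dimensional space, and the minimal and characteristic polynomials of an endomorphism have the same set of roots (every eigenvalue is a root of the minimal polynomial). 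Hence every $b$-exponent is automatically the opposite of a root of $\tilde{b}_{f,0}(s)$, with no ``collision modulo $\bz$'' or Jordan-block analysis needed; the hypothesis $\alpha<\alpha_1+1$ is used solely to exclude $\alpha_k-\beta_k=1$ in your first step. Your assertion that ``not every $b$-exponent need be a root of the Bernstein polynomial'' is therefore false, but since the conclusion you draw from your workaround is the trivially true one, this is a misconception rather than a gap in the proof.
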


\begin{remark}
Note that we follow Saito's convention for the exponents and the spectral numbers, which differs
by $1$ from the convention in~\cite{HS99}.
\end{remark}


The following Corollary is a consequence of Proposition{\rm~\ref{semi}}.


\begin{cor}\label{varchenko-gen}
Let $f_0(x,y)$ be an irreducible germ of plane curve whose 
monodromy has distinct eigenvalues. Let $-\alpha$ be a root 
of the local Bernstein-Sato polynomial $b_{f_0}(s)$. Then, either $-\alpha$ or $-(\alpha+1)$
is a root of $b_{\mu,gen}(s)$. 
\end{cor}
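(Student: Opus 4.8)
The plan is to deduce Corollary~\ref{varchenko-gen} directly from the lower semi-continuity statement in Proposition~\ref{semi}, applied to a suitable $\mu$-constant deformation. First I would recall the setup: since $f_0$ is an irreducible plane curve germ whose monodromy has distinct eigenvalues, the reduced Bernstein polynomial $\tilde b_{f_0}(s)$ has simple roots, and the same holds for every germ in the $\mu$-constant stratum (this is the content of~\eqref{eq:simple_roots} and the discussion after it). In particular the generic Bernstein polynomial $b_{\mu,\mathrm{gen}}(s)$ is well-defined, with a Zariski-dense open set $\mathcal{U}$ in any $\mu$-constant deformation on which $b_{f_t}(s)=b_{\mu,\mathrm{gen}}(s)$.

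Next I would embed $f_0$ in a $\mu$-constant analytic deformation $f_t$ whose generic member lies in $\mathcal{U}$; for instance any one-parameter family with $f_{t}$ generic for $t\neq 0$. By Proposition~\ref{semi}, each root of $\tilde b_{f_t}(s)$ varies lower semi-continuously in $t$. Concretely, writing the roots of $\tilde b_{f_t}(s)$ as $-\alpha_1(t),\dots,-\alpha_k(t)$ (as a set, matched up by continuity of the semi-continuous branches), the value at the special parameter $t=0$ satisfies $\alpha_j(0)\le \alpha_j(t)$ for $t$ generic, i.e.\ $\alpha_j(0)$ is at most the corresponding generic root; combined with the fact that all these numbers are rational with uniformly bounded denominators (they lie in the finite candidate set coming from Kashiwara's theorem, cf.~\eqref{eq:saito}), semi-continuity forces $\alpha_j(t)-\alpha_j(0)\in\mathbb{Z}_{\ge 0}$, and boundedness of the spectrum-type interval~\eqref{eq:saito} forces this integer to be $0$ or $1$.

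Therefore, given a root $-\alpha$ of $b_{f_0}(s)$: if $-\alpha=-1$ then $-\alpha$ is a root of $b_{\mu,\mathrm{gen}}(s)$ trivially (the factor $s+1$ is always present), so assume $-\alpha$ is a root of $\tilde b_{f_0}(s)$. It is then $-\alpha_j(0)$ for some branch $j$, and by the previous paragraph the generic value is $-\alpha_j(t)$ with $\alpha_j(t)\in\{\alpha,\alpha+1\}$. Since $-\alpha_j(t)$ is a root of $\tilde b_{f_t}(s)=\tilde b_{\mu,\mathrm{gen}}(s)$ for generic $t$, we conclude that either $-\alpha$ or $-(\alpha+1)$ is a root of $b_{\mu,\mathrm{gen}}(s)$, which is the assertion.

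The main obstacle I anticipate is making rigorous the matching of roots into semi-continuous branches and then upgrading ``$\alpha_j(0)\le\alpha_j(t)$'' to ``$\alpha_j(t)-\alpha_j(0)\in\{0,1\}$''. The first point is handled because the roots are simple and the Bernstein polynomial has constant degree $\mu$ along the stratum, so the roots form well-defined continuous (indeed semi-continuous) families. The second point is where one must invoke that all candidate roots lie in the fixed finite set of Kashiwara/Saito — in particular in the interval $[-2+\alpha_{f,0},-\alpha_{f,0}]$ of~\eqref{eq:saito} for $n=2$ — so a semi-continuous jump between two rationals of that restricted form can only be by a non-negative integer, and the length of the interval then bounds that integer by $1$. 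Once these two points are in place the Corollary follows formally.
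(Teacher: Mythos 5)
Your overall strategy is the same as the paper's, which indeed obtains Corollary~\ref{varchenko-gen} as a consequence of Proposition~\ref{semi}; but the decisive step of your argument is not justified. You match the roots of $\tilde b_{f_t}(s)$ ``by continuity of the semi-continuous branches'' and then claim that semicontinuity together with the rationality and bounded denominators of the candidate roots forces $\alpha_j(t)-\alpha_j(0)\in\mathbb{Z}_{\ge 0}$. Neither half of this works. In a $\mu$-constant family the roots do not vary continuously: they are constant on a Zariski-open set of parameters and jump at special members, so there are no continuous branches to follow; and two numbers from Kashiwara's candidate set lying in the interval \eqref{eq:saito} (say $-5/12$ and $-7/12$) have bounded denominators without differing by an integer, so boundedness of denominators cannot upgrade an inequality to an integral jump. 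What actually pins the jump to an integer is the constancy of the algebraic monodromy along the $\mu$-constant family: a root $-\alpha$ of $\tilde b_{f_t}$ determines the monodromy eigenvalue $e^{-2\pi i\alpha}$, the multiset of eigenvalues is constant in $t$, and since the eigenvalues are pairwise distinct each eigenvalue corresponds to exactly one root of $\tilde b_{f_t}$ for every $t$. Pairing roots by eigenvalue is the well-defined matching, it forces matched roots to differ by an integer, and it is the real place where hypothesis \eqref{eq:simple_roots} enters (not merely to make the roots simple). With that pairing in hand, Proposition~\ref{semi} fixes the sign of the integer and \eqref{eq:saito} with $n=2$ (an interval of length $2-2\alpha_{f,0}<2$) bounds it by $1$; your reduction to the reduced polynomial via the factor $s+1$ is fine.

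A second point you must settle is the direction of the inequality. In your own notation the roots are the negative numbers $-\alpha_j(t)$, so ``the roots depend lower semi-continuously on $t$'' literally gives $-\alpha_j(0)\le -\alpha_j(t)$, i.e.\ $\alpha_j(0)\ge\alpha_j(t)$, which is the opposite of the inequality you assert. The sign is not cosmetic: with one convention the eigenvalue-pairing argument yields ``$-\alpha$ or $-(\alpha+1)$ is a root of $b_{\mu,gen}$'', with the other it yields ``$-\alpha$ or $-(\alpha-1)$''. You should fix it from Varchenko's statement in \cite{V80} rather than from the phrasing of the Corollary; note that the way Corollary~\ref{varchenko-gen} is actually used in the proof of Theorem~\ref{cor:common} (a generic root $-\alpha$ which is not a root of $b_f$ forces $-(\alpha+1)$ to be a root of $b_f$) corresponds to the convention that the generic $b$-exponents are the smallest ones in the stratum, so the sign has to be verified, not assumed.
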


Let $S_\mu$ be the (non-singular) $\mu$-constant stratum of $f$ at $0$. 
Let $R_{f}$ be the set  
of the roots of $b_{f}(-s)$. For every $g\in S_\mu$ and since $g$ has isolated singularities 
then
$$
R_g\subset \spec(g) \cup \{\alpha-1\mid\alpha\in\spec(g)\}\text{, see \cite{GH07}}.
$$ 
Since the spectral numbers  are constant 
in a $\mu$-constant deformation then, in the image of the map 
$ S_\mu \to \bc [s]: g \mapsto b_{g}(s)$  there are finitely many polynomials. 

The aim of this section is to describe 
the set of common roots of the Bernstein polynomials of the $\mu$-constant stratum, that is, the set  
$$
\cC:=\bigcap_{f \in S_\mu} R_{f}.
$$
By Proposition~(\ref{mu-cons}) the set $\cA \subset \cC$.

In \cite{ACLM-Yano2},  we proved that the set of roots of the Bernstein polynomial~$b_{\mu, \text{gen}}(-s)$ is 
$B_1\cup B_2$.
We split these sets $B_1$ and $B_2$ using~\eqref{eq:b1} and~\eqref{eq:b2} and we set
%
%
$B_{12}=B_1\setminus B_{11}, B_{22}=B_2\setminus B_{21}$. 



The aim of this part is to prove 

\begin{thm}\label{cor:common}
Let $\cC$ be the set of common roots of the Bernstein polynomials of every  
irreducible germ of plane curve whose characteristic sequence is~$CS_{n_1,m}^{n_2,q}$ satisfying~\eqref{eq:simple_roots}.
Then $\cC=B_{11} \cup B_{21}.$
\end{thm}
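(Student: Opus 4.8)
The plan is to prove the two inclusions $B_{11}\cup B_{21}\subseteq\cC$ and $\cC\subseteq B_{11}\cup B_{21}$ separately, using the integral machinery of Section~\ref{sec:res1} for the first and the semicontinuity/spectrum results (Propositions~\ref{semi},~\ref{mu-cons} and Corollary~\ref{varchenko-gen}) for the second.

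\textbf{Step 1: $B_{11}\cup B_{21}\subseteq\cC$.} Fix an arbitrary irreducible germ $f$ with characteristic sequence $CS_{n_1,m}^{n_2,q}$ satisfying~\eqref{eq:simple_roots}; after an analytic change of coordinates and (if needed) passing to an algebraic representative over a number field, I may assume $f$ is a real polynomial of type $(n_1n_2,mn_2,mn_2+q)^+$ or $(n_1n_2,mn_2,mn_2+q)^-_s$ as in~\S\ref{sec:res1} (Propositions~\ref{123to4},~\ref{prop:f-}). For $\beta\in B_{11}$, write $k=m\beta_1+n_1\beta_2$ with $\beta_1,\beta_2\in\bz_{\geq1}$ and consider the integral $I_+(f,\beta_1,\beta_2)(s)$; for $\beta\in B_{21}$ use $\mathcal{I}_-(f,\beta_1,\beta_2,\beta_3)(s)$ with the appropriate $\beta_1,\beta_2\in\bz_{\geq1}$, $\beta_3\in\bz_{\geq0}$. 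By Propositions~\ref{prop:newton1} and~\ref{prop:newton2} such $\beta$ is a candidate pole; the crucial point is that the residue, given by formul\ae~\eqref{residue-B1} and~\eqref{residue-B2} in terms of the meromorphic continuations $G_{h}(\cdot)$, is \emph{transcendental}. Here I invoke Lemma~\ref{beta}: the relevant $G$-values combine into values of the beta function $\boldsymbol{B}(s_1,s_2)$ at rational arguments, which by the Gauss formula are transcendental (nonzero rational multiples of products of $\Gamma$-values, themselves transcendental at non-integer rationals). One must also check the non-vanishing and the hypothesis of Theorem~\ref{pole-integral-root}, namely that $\alpha+1$ is not a pole of the corresponding shifted integral; this is exactly the combinatorial content of the conditions $n_2m\alpha,n_2n_1\alpha\notin\bz$ (resp. $n_2\alpha,D\alpha\notin\bz$) in~\eqref{eq:A1},~\eqref{eq:A2}, together with the semigroup membership $k\in\Gamma_1$ (resp. $\Gamma_2$). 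Then Theorem~\ref{pole-integral-root} gives that $\alpha=\beta$ is a root of $b_f(s)$; since $f$ was arbitrary in $S_\mu$, $\beta\in\cC$.

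\textbf{Step 2: $\cC\subseteq B_{11}\cup B_{21}$.} By~\cite{ACLM-Yano2} the generic Bernstein polynomial has root set $B_1\cup B_2=B_{11}\sqcup B_{12}\sqcup B_{21}\sqcup B_{22}$, and any $\alpha\in\cC$ is in particular a root of $b_{\mu,\mathrm{gen}}(s)$, hence lies in one of these four pieces. It therefore suffices to show that every element of $B_{12}$ and of $B_{22}$ fails to be a common root, i.e. is \emph{not} a root of $b_g(s)$ for at least one $g\in S_\mu$. For this I would again use the integral method but in the reverse direction: an element $\beta\in B_{12}$ (resp. $B_{22}$) corresponds to $k\notin\Gamma_1$ (resp. $k\notin\Gamma_2$), and for such $k$ one can exhibit an explicit germ $g$ in the stratum (for instance a suitably ``generic'' one, or a quasi-homogeneous-like truncation) for which the putative pole at $\beta$ of every integral $\mathcal{I}_\pm(g,\beta_1',\beta_2',\beta_3')(s)$ either does not occur or has rational (indeed vanishing) residue, so the obstruction of Theorem~\ref{pole-integral-root} is void; combined with the spectrum constraint $R_g\subset\spec(g)\cup\{\alpha-1\mid\alpha\in\spec(g)\}$ and Corollary~\ref{varchenko-gen}, this forces $\beta\notin R_g$. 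Concretely, I expect the dichotomy to be: if $k$ is not in the relevant semigroup then $\beta$ sits ``too low'' — it is of the form $\alpha_k-1$ with $\alpha_k\in\spec(f)$ but $\alpha_k\geq\alpha_1+1$ generically — so that by lower semicontinuity (Proposition~\ref{semi}) the root can be pushed off $-\beta$ by a small $\mu$-constant deformation, whereas the elements with $k$ in the semigroup are precisely the ones pinned down by Step~1.

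\textbf{Main obstacle.} The genuinely hard part is Step~2, specifically producing, for each $\beta\in B_{12}\cup B_{22}$, a single concrete germ in $S_\mu$ for which $\beta$ is provably not a root of the Bernstein polynomial. Step~1 is essentially a careful but routine bookkeeping of the residue formul\ae\ plus transcendence of $\Gamma$-values; Step~2 requires either an explicit deformation argument controlling the semicontinuous motion of roots (delicate because several roots can collide when the monodromy is not semisimple — but here~\eqref{eq:simple_roots} guarantees distinct eigenvalues, which should keep things manageable) or a direct computation showing the candidate residue vanishes for a well-chosen $g$. I would organize Step~2 around the semigroup condition: show that $k\notin\Gamma_1$ (resp. $\Gamma_2$) implies the residue~\eqref{residue-B1} (resp.~\eqref{residue-B2}) is a \emph{rational} number for the generic germ, so that Theorem~\ref{pole-integral-root} cannot certify $\beta$, and then upgrade ``not certified'' to ``not a root'' using the fact that the generic Bernstein polynomial is already known and $\beta\notin B_{11}\cup B_{21}$ cannot be forced by the lower-semicontinuity argument of Corollary~\ref{varchenko-gen} together with the location of $\beta$ relative to $\alpha_1+1$.
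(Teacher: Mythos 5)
Your Step 1 does not go through as written, and it is here that your route diverges fatally from what is actually needed. Theorem~\ref{pole-integral-root} only applies to polynomials with coefficients in an algebraic extension of $\bq$, put moreover in the special real positive form of \S\ref{sec:res1}; but $\cC$ is an intersection over \emph{every} analytic germ in the stratum, with arbitrary (typically transcendental) coefficients, and since the Bernstein polynomial is precisely what varies along the stratum you cannot ``pass to an algebraic representative'' without changing the object you are studying. Even inside the special class, nothing guarantees that the residue at $-\beta$ is nonzero and transcendental for \emph{every} member of the stratum (the examples in \S4 show residues do vanish for special parameter values). This is why the paper proves $B_{11}\cup B_{21}\subset\cC$ by a completely different mechanism: the elements $\beta>1$ are handled by Corollary~\ref{varchenko-gen} together with the distinct-eigenvalue hypothesis (the root in the class of $\beta$ mod $\bz$ must be $-\beta$ or $-\beta-1$, and $-\beta-1\le -2$ is impossible), and the elements $\beta<1$ are handled by the Lichtin--Loeser criterion on an embedded resolution, checking non-integrality of the numbers $\epsilon_{j,k}(\omega)$ for the forms $\omega_1=x^{\beta_1-1}y^{\beta_2-1}dxdy$ and $\omega_2=x^{\beta_1-1}y^{\beta_2-1}g_Y^{\beta_3}dxdy$; that criterion depends only on resolution data, hence applies uniformly to all germs in the stratum, which is exactly what the integral method cannot deliver.

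In Step 2 there is a genuine logical gap: showing that the residue at $-\beta$ vanishes (or is rational) for a well-chosen germ only means the sufficient criterion of Theorem~\ref{pole-integral-root} is silent; it does not show $-\beta\notin R_g$, and your proposed ``upgrade'' via semicontinuity and the position of $\beta$ relative to $\alpha_1+1$ cannot force a root to be absent for a specific germ. The paper's argument (Propositions~\ref{prop:B12}, \ref{prop:B22}) is different and sharper: since $k\notin\Gamma_1$ (resp.\ $k\notin\Gamma$) but $k$ shifted by $mn_1n_2$ (resp.\ $n_2(mn_1n_2+q)$) lies beyond the conductor, one writes $\beta+1$ in the admissible form with $\beta_1,\beta_2\ge 1$ (and $\beta_3\ge 0$), so the residue at $s=-\beta-1$ is automatically nonzero and transcendental for \emph{any} germ of type $+$ (resp.\ $-$); then Claims~\ref{claim1} and~\ref{claim2} (the real technical work, done in the appendix by a multiparameter family $f_{\mathbf t}$ whose residues at $-\beta$ are killed one parameter at a time) produce a germ for which $-\beta$ is not a pole of any of the relevant integrals, so Theorem~\ref{pole-integral-root} certifies $-\beta-1$ as a root; finally, because the monodromy eigenvalues are distinct there is exactly one root of $\tilde b_f$ in each class mod $\bz$, so $-\beta$ is \emph{not} a root. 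Your outline identifies the right combinatorial dichotomy (membership of $k$ in the semigroup) but is missing both this ``promote $-\beta-1$ to a root, then exclude $-\beta$ by multiplicity one per eigenvalue'' step and the explicit construction establishing the vanishing of all residues at $-\beta$.
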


We divide the proof in three parts.

\begin{prop}
$B_{11} \cup B_{21} \subset \cC.$
\end{prop}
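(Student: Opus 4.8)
The plan is to show that every element of $B_{11}\cup B_{21}$ is a root of $b_{f}(s)$ for \emph{every} germ $f$ in the $\mu$-constant stratum, i.e. for every real polynomial representative of type $(n_1n_2,mn_2,mn_2+q)^+$ (resp. $(n_1n_2,mn_2,mn_2+q)^-_s$), using the pole/transcendental-residue criterion of Theorem~\ref{pole-integral-root}. Concretely, for $\beta=\frac{m\beta_1+n_1\beta_2}{mn_1n_2}\in B_{11}$ with $\beta_1,\beta_2\in\bz_{\geq1}$, I would take $\alpha=-\beta$ and consider the integral $I_+(f,\beta_1,\beta_2)(s)$; for $\beta\in B_{21}$, written via $\beta_1,\beta_2\in\bz_{\geq1}$, $\beta_3\in\bz_{\geq0}$ as in \eqref{eq:b2}, I would take $\alpha=-\beta$ and the integral $\mathcal{I}_-(f,\beta_1,\beta_2,\beta_3)(s)$. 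By Propositions~\ref{prop:newton1} and~\ref{prop:newton2}, $\alpha$ is among the candidate (simple) poles of the relevant integral, and the residue is given explicitly by~\eqref{residue-B1} (resp.~\eqref{residue-B2}) in terms of the $G$-functions, which by Lemma~\ref{beta} are values of the beta function $\boldsymbol{B}(s_1,s_2)$ at rational arguments — hence transcendental (by the Gauss/Schneider-type result already used in \cite{ACLM-Yano2}), \emph{provided} the arguments $s_1,s_2$ and $s_1+s_2$ avoid non-positive integers, which is exactly encoded by the conditions $n_2m\alpha,n_2n_1\alpha\notin\bz$ in \eqref{eq:A1} and $n_2\alpha,D\alpha\notin\bz$ in \eqref{eq:A2} defining $B_1$, $B_2$.

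The two points requiring care are: (i) that $\alpha$ is a \emph{simple} pole with \emph{nonzero} residue, and (ii) that $\alpha+1$ is not a pole of $\mathcal{I}_\pm(f,\beta_1',\beta_2',\beta_3')$ for any admissible $(\beta_1',\beta_2',\beta_3')$, as required to invoke Theorem~\ref{pole-integral-root}. For (i), I would argue that the relevant $G$-term is, up to a nonzero rational factor and lower-order beta-function contributions, a single value $\frac{c^{-s_2}}{p}\boldsymbol{B}(s_1,s_2)$; a sum of such transcendental values with rational coefficients cannot vanish unless the rational coefficients conspire, and here one can separate the contribution at the smallest pole (the case $\nu=0$, and $i$ minimal in the $-$ case) so that no cancellation occurs — this is precisely the mechanism of \cite{Pi88,ACLM-Yano2}, and I would reduce to it. The non-pole condition $\alpha+1\notin$ poles in (ii) should follow from the arithmetic of the candidate pole sets $P_1,P_{2,i}$ together with $\beta\in B_1\cup B_2$, $\beta<1$ (so $\alpha\in(-1,0)$ roughly, or in the Saito range \eqref{eq:saito}); in fact for $b$-exponents the shift by $1$ is controlled, and one checks directly that $\alpha+1$ cannot simultaneously lie in the pole lattice for the shifted exponents.

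The key structural step is the \textbf{reduction to the truncated (one-Puiseux-pair) situation}: the hypothesis $\beta_1,\beta_2\in\bz_{\geq1}$ (and $\beta_3\in\bz_{\geq0}$) means precisely that the ``$k$'' appearing in \eqref{eq:A1} lies in the semigroup $\Gamma_1=\langle m,n_1\rangle$ (resp. in $\Gamma_2$ for \eqref{eq:A2}), which is exactly what guarantees that the functions $f_1,f_2$ obtained after the toric blow-ups $\tilde f(x,xy)=x^{n_1n_2m}f_1$, $\tilde f(xy,y)=y^{n_1n_2m}f_2$ (and their analogues in the $-_s$ case) have the shape $(y^p+c)$ or $(1+cx^p)$ near the relevant chart, independently of the deformation parameters $h_1,h_2$. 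Thus the residue formulas~\eqref{residue-B1},~\eqref{residue-B2} genuinely evaluate — via Lemma~\ref{beta} — to a beta value whose transcendence is parameter-free, so the conclusion holds for \emph{all} $f\in S_\mu$, not just the generic one.

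The main obstacle I expect is the nonvanishing of the residue, step (i): one must rule out that the finite sum of $G$-values (over $\nu$, and over $i$ in the $-$ case) cancels. I would handle this by ordering the candidate poles and isolating the ``leading'' one — the value of $\alpha$ of interest is the \emph{first} pole of its family (i.e. $\nu=0$), so its residue picks up only the $\nu=0$ term, which is a single nonzero rational multiple of a beta value by Lemma~\ref{beta}; the only remaining worry is the interaction between the two $G$-terms $G_{h^1}$ and $G_{h^2}$ in \eqref{residue-B1} (resp. \eqref{residue-B2}), and Lemma~\ref{beta} is designed exactly so that their sum is the single transcendental quantity $\tfrac{c^{-s_2}}{p}\boldsymbol{B}(s_1,s_2)$ rather than a difference that could vanish. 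Once this is in place, Theorem~\ref{pole-integral-root} applies verbatim and gives $B_{11}\cup B_{21}\subset R_{f,0}$ for every $f$ in the stratum, hence $B_{11}\cup B_{21}\subset\cC$.
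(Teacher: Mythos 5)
Your plan has a genuine gap at the very point the proposition is about: the quantifier ``for every $f\in S_\mu$''. Theorem~\ref{pole-integral-root} (and the whole integral machinery of Section~\ref{sec:res1}) applies only to very special representatives of the equisingularity class: real polynomials of type $(n_1n_2,mn_2,mn_2+q)^{+}$ or $(n_1n_2,mn_2,mn_2+q)^{-}_s$, positive on the integration domain, and with coefficients in an algebraic extension of $\mathbb{Q}$ (the transcendence of the residue is only meaningful against algebraic data). An arbitrary germ of the $\mu$-constant stratum --- complex, with possibly transcendental moduli, not of the shape \eqref{eq:f_gen1} or \eqref{eq:f_gen2} --- is simply not reachable by this method, so even if every analytic step were carried out you would obtain $B_{11}\cup B_{21}\subset R_{f,0}$ only for those special $f$, not the inclusion into $\cC=\bigcap_{f\in S_\mu}R_f$. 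Relatedly, your claim that the residue at $-\beta$ is ``parameter-free'' is not correct: the functions $f_1,f_2$ and hence the $G$-terms in \eqref{residue-B1}, \eqref{residue-B2} do depend on all the deformation coefficients of $f$ --- this dependence is exactly what Claims~\ref{claim1} and~\ref{claim2} exploit to make residues \emph{vanish} for suitable coefficients; the non-vanishing when the numerator lies in the semigroup is a non-trivial result of \cite{ACLM-Yano2} valid only for type-$\pm$ representatives. Finally, $B_{11}\cup B_{21}$ contains elements $\beta>1$ (e.g.\ $\tfrac{13}{12}$ in Yano's example), and for those the hypothesis of Theorem~\ref{pole-integral-root} that $\alpha+1=1-\beta$ is not a pole of any $\mathcal{I}_\pm(f,\beta_1',\beta_2',\beta_3')$ is no longer automatic (it is only automatic when $\beta<1$, since then $1-\beta>0$), and you do not verify it.

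The paper's proof is structured precisely to avoid these problems. For $\beta>1$ it uses no integrals at all: by Corollary~\ref{varchenko-gen} (semicontinuity of the roots under $\mu$-constant deformation when the monodromy eigenvalues are distinct), if $-\beta$ failed to be a root of $b_f$ for some $f$ in the stratum then $-(\beta+1)<-2$ would be, contradicting the range of the roots; this works for \emph{every} germ at once. For $\beta<1$ it invokes the Lichtin--Loeser criterion on the embedded resolution, applied to the forms $\omega_1=x^{\beta_1-1}y^{\beta_2-1}dx\,dy$ and $\omega_2=x^{\beta_1-1}y^{\beta_2-1}g(x,y)^{\beta_3}dx\,dy$: the conditions to check are non-integrality of the numbers $\epsilon_{j,k}(\omega)$ built from the multiplicities $N_j,\nu_j(\omega)$, which are purely combinatorial data of the resolution and hence identical for all germs with the given characteristic sequence --- this is what makes the conclusion uniform over $S_\mu$. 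If you want to salvage an integral-based argument you would need an additional mechanism (semicontinuity plus constructibility of the Bernstein stratification, or a topological criterion as above) to pass from the special algebraic, positive representatives to the whole stratum; as written, that bridge is missing.
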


\begin{proof}
Let $\alpha \in B_{11} \cup B_{21} \subset B_1 \cup B_2$ and let $f $ be a fixed germ in $S_\mu$. 
As $-\alpha$  is a root of the generic Bernstein polynomial,  if $-\alpha $ is not a root of the Bernstein polynomial of $f$, 
then $-(\alpha+1)$ is by Corollary~\ref{varchenko-gen}.
Then $-\alpha -1>-2$ and $\alpha <1$. In particular, if $\alpha>1$ then
$\alpha$ is a root for any germ. We need only to study $B'=(B_{11} \cup B_{21})\cap \{\alpha\in \mathbb{Q}\vert \alpha<1\}$.
Hence, we need only to prove that $B'\subset \cC$.

To do this, we use a result of B. Lichtin 
(\cite[Section 3, Corollary 2]{Li89}) and Loeser (\cite[Remarque III.3.5]{Lo88}).  
Let us state it.

\begin{figure}[ht]
\begin{center}
\iftikz
\begin{tikzpicture}[vertice/.style={draw,circle,fill,minimum size=0.2cm,inner sep=0}]
\coordinate (M1) at (0,0);
\coordinate (M2) at (3.6,0);
\node[vertice] at (M1) {};
\draw ($(M1)-(1.5,0)$)--(M1);
\node[vertice] at ($(M1)+(-1,0)$) {};
\node at ($(M1)+(-1.75,-.01)$) {$\dots$};
\draw ($(M1)-(0,1.5)$)--(M1);
\node[vertice] at ($(M1)+(0,-1)$) {};
\node at ($(M1)+(0.01,-1.65)$) {$\vdots$};
\draw ($(M1)+(1.5,0)$)--(M1);
\node[vertice] at ($(M1)+(1,0)$) {};
\node at ($(M1)+(1.85,-.01)$) {$\dots$};
\node[vertice] at (M2) {};
\draw ($(M2)-(1.5,0)$)--(M2);
\node[vertice] at ($(M2)+(-1,0)$) {};
\draw ($(M2)-(0,1.5)$)--(M2);
\node[vertice] at ($(M2)+(0,-1)$) {};
\node at ($(M2)+(0.01,-1.65)$) {$\vdots$};
\draw[->] (M2)--($(M2)+.75*(1,1)$);

\node[above] at (M1) {$D_1$};
\node[above] at ($(M1)-(1,0)$) {$D_{1,1}$};
\node[right] at ($(M1)-(0,1)$) {$D_{1,2}$};
\node[above] at ($(M1)+(1,0)$) {$D_{1,3}$};
\node[below right] at (M2) {$D_2$};
\node[above] at ($(M2)-(1,0)$) {$D_{2,1}$};
\node[left] at ($(M2)-(0,1)$) {$D_{2,2}$};
\node[above] at ($(M2)+(1,0)$) {$D_{2,3}$};
\end{tikzpicture}
\else
\includegraphics{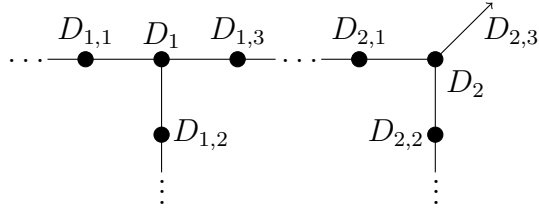}
\fi
\caption{Resolution graph}
\label{fig:grafo}
\end{center}
\end{figure}

Let us consider an embedded resolution of~$f$, see Figure~\ref{fig:grafo}, together with
a  $2$-differential form~$\omega$. For a divisor $D_j$,
let us denote $N_j:=\ord_{D_j}(f)$ and $\nu_j(\omega):=\ord_{D_j}(\omega)+1$.

In the resolution of $f$ we have two branching divisors that we denote by $D_1,D_2$. 
We denote by $D_{j,1},D_{j,2},D_{j,3}$ the divisors adjacent to $D_j,j=1,2$. For $j=1,2,k=1,2,3$ we set
$$
\epsilon_{j,k}(\omega):=
\frac{1}{N_j}\det
\begin{pmatrix}
N_j&N_{j,k}\\
\nu_j(\omega)&\nu_{j,k}(\omega)
\end{pmatrix}
\overset{\bmod\bz}{\equiv}
-\frac{\nu_j(\omega)N_{j,k}}{N_j}.
$$
If the following conditions hold:
\begin{itemize}
\item $\epsilon _{j,k}(\omega)$ is not an integer for $k=1,2,3$,
\item $\frac{\nu_j(\omega)}{N_j}<1$;
\end{itemize}
then  $-\frac{\nu_j(\omega)}{N_j}$
is a root of the Bernstein polynomial of $f$. 
We have the relation $\sum _k\epsilon _{j,k}(\omega)=-2$ for $j=1,2$.

It is easily checked that
$$
N_1=m n_1 n_2,\quad N_2=n_2(m n_1 n_2+q),
$$
and 
$$
N_{1,1}=n_1 n_2 \alpha_{1,1},\quad N_{1,2}=m n_2 \alpha_{1,2},\quad N_{1,3}=(m n_1 \alpha'_{1,3}+1)n_2,
$$
where
$$
\alpha_{1,1} n_1+1=\alpha'_{1,1}m,\quad
\alpha_{1,2} m+1=\alpha'_{1,2}n_1,\quad
\alpha_{1,3}+n_2=\alpha'_{1,3} q,
$$
all positive integers. We also have
$$
N_{2,1}=n_2(m n_1 \alpha'_{2,1}+\alpha_{2,1}),\quad 
N_{2,2}=\alpha_{2,2}(m n_1 n_2+q),\quad N_{2,3}=1, 
$$
where
$$
\alpha_{2,1} n_2+1=\alpha'_{2,1}q,\quad
\alpha_{2,2} q+1=\alpha'_{2,2}n_2,
$$
again all positive integers.

Let us check the conditions for $\alpha=\frac{m\beta_1+n_1\beta_2}{mn_1n_2}\in B_{11}\cap B'$
with the form $\omega_1=x^{\beta_1-1}y^{\beta_2-1}dxdy$. 
Since $\nu_1(\omega_1)=m\beta_1+n_1\beta_2$, we have that $\alpha=\frac{\nu_1(\omega_1)}{N_1}$.
Only the non-integer condition must be checked.
We have:
\begin{gather*}
-\epsilon_{1,1}(\omega_1)\overset{\bmod\bz}{\equiv}\frac{(m\beta_1+n_1\beta_2)\alpha_{1,1}n_1 n_2}{m n_1 n_2}
\overset{\bmod\bz}{\equiv}\frac{n_1\beta_2\alpha_{1,1}}{m}
\overset{\bmod\bz}{\equiv}-\frac{\beta_2}{m}\notin\bz\\
-\epsilon_{1,2}(\omega_1)\overset{\bmod\bz}{\equiv}\frac{(m\beta_1+n_1\beta_2)\alpha_{2,1}m n_2}{m n_1 n_2}
\overset{\bmod\bz}{\equiv}\frac{m\beta_1\alpha_{1,2}}{n_1}
\overset{\bmod\bz}{\equiv}-\frac{\beta_1}{n_1}\notin\bz\\
-\epsilon_{1,3}(\omega_1)\overset{\bmod\bz}{\equiv}\frac{(m\beta_1+n_1\beta_2)(m n_1 \alpha'_{1,3}+1) n_2}{m n_1 n_2}
\overset{\bmod\bz}{\equiv}\frac{m\beta_1+n_1\beta_2}{m n_1}\notin\bz
\end{gather*}

Finally we check the conditions for 
$\alpha=\frac{(m\beta_1+n_1\beta_2)n_2+q+(mn_1n_2+q)\beta_3}{n_2(mn_1n_2+q)}\in B_{21}\cap B'$
with the form $\omega_2=x^{\beta_1-1}y^{\beta_2-1}g_Y(x,y)^{\beta_3}dxdy$. 
Since $\nu_2(\omega_2)=(m\beta_1+n_1\beta_2)n_2+q+(mn_1n_2+q)\beta_3$, we have that $\alpha=\frac{\nu_2(\omega_2)}{N_2}$.
Let us check the non-integer condition.
We have:
\begin{gather*}
-\epsilon_{2,1}(\omega_2)\overset{\bmod\bz}{\equiv}
\frac{((m\beta_1+n_1\beta_2)n_2+q+(mn_1n_2+q)\beta_3)(\alpha'_{2,1}m n_1+\alpha_{2,1}) n_2}{n_2(mn_1n_2+q)}
\overset{\bmod\bz}{\equiv}\\
-\frac {m\beta_1+n_1\beta_2-mn_1}{mn_1n_2+q}\\
\epsilon_{2,2}(\omega_2)\!\!\!\overset{\bmod\bz}{\equiv}\!\!\!\!-
\frac{((m\beta_1+n_1\beta_2)n_2+q+(mn_1n_2+q)\beta_3)\alpha_{2,2}(m n_1 n_2+q)}{n_2(mn_1n_2+q)}\!\!\!
\overset{\bmod\bz}{\equiv}\!\!\frac{\beta_3+1}{n_2}\\
\epsilon_{2,3}(\omega_2)\!\!\!\overset{\bmod\bz}{\equiv}\!\!\!\!-
\frac{((m\beta_1+n_1\beta_2)n_2+q+(mn_1n_2+q)\beta_3)}{n_2(mn_1n_2+q)}.
\end{gather*}
None of the above numbers is an integer.
\end{proof}

\begin{prop}\label{prop:B12}
For all  $\beta \in B_{12}$, there exits $f_{\beta}$ with characteristic sequence $(n_1n_2,mn_2,mn_2+q)$ 
such that $-\beta$ is  not a root of the Bernstein polynomial of~$f_{\beta}$.
\end{prop}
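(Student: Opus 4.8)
Here is how I would attack Proposition~\ref{prop:B12}.

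The first step is to observe that every $\beta\in B_{12}$ satisfies $\beta<1$: here $\beta=\frac{m+n_1+k}{mn_1n_2}$ with $k$ \emph{not} in the numerical semigroup $\Gamma_1=\langle m,n_1\rangle$, so $k$ is at most the Frobenius number $mn_1-m-n_1$ and $m+n_1+k<mn_1<mn_1n_2$. Using the spectrum formulas \eqref{eq:a1}--\eqref{eq:union} together with the conditions in \eqref{eq:A1}, I would then check that $\beta\notin\spec(f)$ (if $\beta\in A_1$ then $m+n_1+k\in\Gamma_1$, impossible since $k$ is a gap; if $\beta\in A_2$ then $n_1\mid m+k$ or $m\mid n_1+k$ by \eqref{eq:simple_roots}, contradicting \eqref{eq:A1}), whereas $\beta+1\in\spec(f)$ — because $\beta\in B_1$ is a root of $b_{\mu,\mathrm{gen}}(-s)$, the first part of Proposition~\ref{sp-b-exp:hs} furnishes a spectral number differing from $\beta$ by $0$ or $1$, and it cannot equal $\beta$. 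Since the algebraic monodromy has pairwise distinct eigenvalues, each residue class modulo $\bz$ contains at most one spectral number and at most one $b$-exponent, and the two sets correspond bijectively (via being congruent modulo $\bz$); as the only $b$-exponent candidates in the class of $\beta$ are $\beta$ and $\beta+1$ (the $b$-exponents lying in $(0,n)$), exactly one of them is a $b$-exponent of $f$. Hence, for every $f$ with characteristic sequence $CS_{n_1,m}^{n_2,q}$, $-\beta$ is a root of $b_f$ if and only if $-(\beta+1)$ is not; it therefore suffices to build $f_\beta$ for which $-(\beta+1)$ \emph{is} a root of $b_{f_\beta}$.

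For this I would use the integral method. Since $m+n_1+k+mn_1n_2>mn_1$ lies in $m+n_1+\Gamma_1$, write $m+n_1+k+mn_1n_2=\beta_1m+\beta_2n_1$ with $\beta_1,\beta_2\in\bz_{\geq1}$; the residues $\beta_1\bmod n_1$ and $\beta_2\bmod m$ are forced by the equation, and the conditions $n_1\nmid m+k$, $m\nmid n_1+k$ of \eqref{eq:A1} translate exactly into $n_1\nmid\beta_1$, $m\nmid\beta_2$. For \emph{any} $f$ of type $(n_1n_2,mn_2,mn_2+q)^+$, the value $\alpha=-(\beta+1)$ is a candidate simple pole (the case $\nu=0$) of $\mathcal{I}_+(f,\beta_1,\beta_2,0)(s)$; since the analytic terms contribute nothing in the relevant limit one has $f_1(0,y)=(1+y^{mn_1})^{n_2}$ and $f_2(x,0)=(1+x^{mn_1})^{n_2}$, so \eqref{residue-B1} and Lemma~\ref{beta} (with $p=mn_1$, $c=1$) give
\[
\res_{s=-(\beta+1)}\mathcal{I}_+(f,\beta_1,\beta_2,0)(s)=\frac{1}{m^2n_1^2n_2}\,\boldsymbol{B}\!\left(\frac{\beta_2}{m},\frac{\beta_1}{n_1}\right),
\]
which is transcendental: $\tfrac{\beta_2}{m}$, $\tfrac{\beta_1}{n_1}$ and their sum $n_2(\beta+1)$ are non-integral rationals (the last because $k$ is not the Frobenius number, again by \eqref{eq:A1}), so Schneider's transcendence theorem for the Beta function applies. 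Thus, whatever $+$-type $f_\beta$ we choose, $\alpha=-(\beta+1)$ is an honest pole of $\mathcal{I}_+(f_\beta,\beta_1,\beta_2,0)(s)$ with transcendental residue.

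By Theorem~\ref{pole-integral-root} it remains to arrange that $\alpha+1=-\beta$ is \emph{not} a pole of $\mathcal{I}_+(f_\beta,\beta_1',\beta_2',0)(s)$ for any $\beta_1',\beta_2'\in\bz_{\geq1}$. Because $k\notin\Gamma_1$, every representation $m+n_1+k=\beta_1'm+\beta_2'n_1+\nu$ with $\beta_1',\beta_2'\geq1$ has $1\leq\nu\leq k$, and the associated residue from \eqref{residue-B1} is built from the Taylor coefficients, of degrees $1,\dots,k$, of $f_1^{-\beta}$ in $x$ and of $f_2^{-\beta}$ in $y$ — exactly the coefficients produced by the monomials of $h_1$ with $mi+n_1j-mn_1\in\{1,\dots,k\}$ together with the term $x^ay^b$. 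The plan is to solve for the coefficients $a_{ij}$ of $h_1$ so that $f_1$, $f_2$ agree with $(1+y^{mn_1})^{n_2}$, $(1+x^{mn_1})^{n_2}$ up to order $k+1$, keeping the characteristic sequence $CS_{n_1,m}^{n_2,q}$ and $f_\beta>0$ near the origin (Proposition~\ref{123to4}); then $-\beta$ is not a pole, Theorem~\ref{pole-integral-root} yields that $-(\beta+1)$ is a root of $b_{f_\beta}$, and by the first paragraph $-\beta$ is not. The main obstacle is precisely this construction: showing the resulting system on the $a_{ij}$ is solvable (it becomes triangular once the $x$-degrees are ordered) and carrying out the explicit residue bookkeeping of \eqref{residue-B1} and Lemma~\ref{beta}. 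A parallel argument with the $(n_1n_2,mn_2,mn_2+q)^-_s$ polynomials, the integrals $\mathcal{I}_-$, and $B_2$ in place of $B_1$ would give the companion statement for $B_{22}$.
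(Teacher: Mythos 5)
Your reduction is sound and matches the paper's: you correctly compute (for any $f$ of type $(n_1n_2,mn_2,mn_2+q)^+$) the transcendental residue of $\mathcal{I}_+(f,\beta_1,\beta_2,0)(s)$ at $-(\beta+1)$, and you correctly identify that, by Theorem~\ref{pole-integral-root} together with the distinct-eigenvalue dichotomy, everything hinges on producing one $f_\beta$ for which the residue of $\mathcal{I}_+(f_\beta,\beta_1',\beta_2',0)(s)$ at $-\beta$ vanishes for \emph{all} $(\beta_1',\beta_2')$. But that last step is exactly the content of the paper's Claim~\ref{claim1} (proved in the Appendix), you leave it as a ``plan'', and the plan you propose cannot work in general. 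Requiring $f_1$ and $f_2$ to agree with $(1+y^{mn_1})^{n_2}$ and $(1+x^{mn_1})^{n_2}$ up to order $k+1$ is not a triangular linear-algebra problem with free parameters: since the coefficient of $x^v$ in $f_1$ is $f_{mn_1n_2+v}(1,y^{n_1})$ (and of $y^v$ in $f_2$ is $f_{mn_1n_2+v}(x^m,1)$), where $f_w$ denotes the $(m,n_1)$-quasi-homogeneous part of $f$ of weight $w$, your condition is equivalent to $f_{mn_1n_2+v}\equiv 0$ for $1\leq v\leq k$. Whenever $q\leq k$ this forces $f_{mn_1n_2+q}\equiv 0$, which is incompatible with condition~\ref{G3+}: the second Puiseux exponent $\frac{mn_2+q}{n_1n_2}$ requires a nonzero weight-$(mn_1n_2+q)$ contribution along the branch, so such an $f_\beta$ cannot have characteristic sequence $CS_{n_1,m}^{n_2,q}$. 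This case really occurs: for $(n_1,m,n_2,q)=(4,5,2,1)$, i.e.\ characteristic sequence $(8,10,11)$, one has $\beta=\frac{11}{40}\in B_{12}$ with $k=2>q=1$, and killing the $x$-coefficient of order $1$ in $f_1$ forces the coefficients of the terms $xy^9,x^5y^4$ to cancel against the cross term $n_2(x^4+y^5)\cdot a_{1,4}xy^4$, i.e.\ $f_{41}\equiv0$, destroying the second pair. Your construction only succeeds in the easy case $q>k$, where $f=(x^{n_1}+y^m)^{n_2}+x^ay^b$ already works trivially.

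This is precisely why the paper's proof of Claim~\ref{claim1} is more delicate: it does \emph{not} make the Taylor coefficients of $f_1^{-\beta}$, $f_2^{-\beta}$ vanish. Instead it introduces deformation parameters $t_p$ on monomials $x^{i_p}y^{j_p}$ \emph{inside} the bracket, shows that each relevant residue equals $\boldsymbol{B}\bigl(\frac{1+i_0}{n_1},\frac{1+j_0}{m}\bigr)$ times a polynomial $Q_p(\mathbf{t})$ with rational coefficients and nonvanishing linear part (all the individual Beta values occurring collapse, via Lemma~\ref{beta} and the congruence bookkeeping, to rational multiples of a single one), and then solves $Q_p=0$ recursively in $t_1,\dots,t_r$. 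The cancellation happens at the level of the integrals, not of the integrands, and this is what lets the characteristic sequence survive. As written, your argument therefore has a genuine gap at its central step; the surrounding reductions (including the spectral-number detour in your first paragraph, which is correct but not needed -- the distinct-eigenvalue dichotomy alone suffices, as in the paper) are fine.
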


\begin{proof}
Let $\beta \in B_{12}$, i.e. there exists $k\geq 1$ 
such that $\beta=\frac{n_1+m+k}{n_1n_2 m}$  and $k$ is not
in the semigroup $\Gamma_1$ generated by $m,n_1$. 
Then $\beta+1=\frac{n_1+m+n_1 m m_2 +k}{n_1n_2 m}$.
Since the conductor of $\Gamma_1$ is $m n_1-m-n_1$, then
$n_1+m+n_1 m m_2 +k\in\Gamma_1$ and 
there exist $\beta_1$ and $\beta_2$ such that $1+\beta =\frac{m\beta_1+n_1\beta_2}{mn_1n_2}$.
In particular, if $f$ is of type $(n_1n_2,mn_2,mn_2+q)^+$, then
$$ 
 \res_{s=-\beta-1} I(f,\beta_{1},\beta_{2})(s)\neq 0
$$
is transcendental, see~\cite[Proposition~3.3]{ACLM-Yano2}.

\begin{clm}\label{claim1}
There exists $f$ of type $(n_1n_2,mn_2,mn_2+q)^+$  such that   
$$ 
 \res_{s=-\beta} I_+(f,\beta_1,\beta_2 )(s)=0, \qquad  
\qquad \forall(\beta_1,\beta_2)\in \bz_{\geq1}. 
$$
\end{clm}

Assume that Claim~\ref{claim1} has been proved (see the Appendix~\ref{sec:appendix}).
For such an~$f$,  $-\beta-1$ is a root of the Bernstein polynomial of $f$.
Since the hypotheses of Theorem~\ref{pole-integral-root} hold (see also \cite[Theorem~5.3]{ACLM-Yano2})
$-\beta$ is not such a root since the monodromy has distinct eigenvalues. Then there exists $f$
such that  $-\beta $ is not root of the  Bernstein  polynomial $b_{f,0}(s)$.
\end{proof}

\begin{prop}\label{prop:B22}
For all $\beta \in B_{22}$, there exists $f_{\beta}$ 
with characteristic sequence $(n_1n_2,mn_2,mn_2+q)$ such that $-\beta$ 
is not a root of the Bernstein polynomial of $f_{\beta}$.
\end{prop}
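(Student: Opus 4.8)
The plan is to mirror the proof of Proposition~\ref{prop:B12} but with the $-_s$-type polynomials and the three-parameter integral $\mathcal{I}_-(f,\beta_1,\beta_2,\beta_3)(s)$ in place of $I_+(f,\beta_1,\beta_2)(s)$. So let $\beta\in B_{22}$; by definition of $B_2$ and $B_{21}$ there is an integer $k\geq 1$ with $\beta=\frac{(m+n_1)n_2+q+k}{n_2(mn_1n_2+q)}$ such that $k$ is \emph{not} in the semigroup $\Gamma_2$ associated with $CS_{n_1,m}^{n_2,q}$ (generated by $n_2 n_1, n_2 m, mn_1n_2+q$). First I would pass to $\beta+1$: since $\beta+1=\frac{(m+n_1)n_2+q+n_2(mn_1n_2+q)+k}{n_2(mn_1n_2+q)}$, and since the conductor of $\Gamma_2$ is finite, adding the generator $n_2(mn_1n_2+q)$ (the numerator $N_2$) pushes $k$ into $\Gamma_2$; more precisely, I must arrange that the shifted numerator is of the form $(m\beta_1+n_1\beta_2)n_2+(mn_1n_2+q)\beta_3+q$ with $\beta_1,\beta_2\in\bz_{\geq 1}$, $\beta_3\in\bz_{\geq 0}$, which is exactly the membership condition defining $B_{21}$ at level $\beta+1$. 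This is the elementary semigroup bookkeeping; one has to be slightly careful that $\beta_1,\beta_2$ can be taken $\geq 1$ (not merely $\geq 0$), using that the relevant truncated conductor bound leaves room, just as in Proposition~\ref{prop:B12}.

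Next, for $f$ of type $(n_1n_2,mn_2,mn_2+q)^-_s$, the residue of $\mathcal{I}_-(f,\beta_1,\beta_2,\beta_3)(s)$ at $s=-\beta-1$, which is a pole in the family $P_{2,i}$ for the appropriate $i$, is transcendental and nonzero by the residue formula~\eqref{residue-B2} together with the transcendence statement from~\cite[Section 3]{ACLM-Yano2} (the analogue of \cite[Proposition~3.3]{ACLM-Yano2} used in the $+$-case): the residue is a nonzero rational combination of values of the beta function via Lemma~\ref{beta}, hence transcendental. Then the key step is the analogue of Claim~\ref{claim1}:

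\begin{clm}\label{claim2}
There exists $f$ of type $(n_1n_2,mn_2,mn_2+q)^-_s$ such that
$$
\res_{s=-\beta}\mathcal{I}_-(f,\beta_1,\beta_2,\beta_3)(s)=0,\qquad\forall(\beta_1,\beta_2,\beta_3)\in\bz_{\geq 1}^2\times\bz_{\geq 0}.
$$
\end{clm}

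Granting Claim~\ref{claim2} (to be handled in the Appendix by the same kind of construction as for Claim~\ref{claim1}, exploiting the freedom in the coefficients of $h_2$ and the moduli of the deformation to kill all the residues at $-\beta$ simultaneously), the argument closes exactly as in Proposition~\ref{prop:B12}: for such an $f$, the point $\alpha=-\beta-1$ is a pole of $\mathcal{I}_-$ with transcendental residue, and $\alpha+1=-\beta$ is \emph{not} a pole of any $\mathcal{I}_-(f,\beta_1',\beta_2',\beta_3')(s)$ by Claim~\ref{claim2}, so Theorem~\ref{pole-integral-root} applies and gives that $-\beta-1$ is a root of $b_{f,0}(s)$. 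Since the monodromy of $f$ has distinct eigenvalues, the reduced Bernstein polynomial has degree $\mu$ and its roots are the $-\beta_k$ with $\beta_k$ the $b$-exponents; as $-\beta$ and $-\beta-1$ cannot both be roots in this situation (they would force a repeated eigenvalue, cf.~\eqref{eq:simple_roots} and Corollary~\ref{varchenko-gen}), we conclude $-\beta$ is not a root of $b_{f_\beta,0}(s)$ for $f_\beta:=f$.

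The main obstacle is Claim~\ref{claim2}: one must produce a single polynomial $f$ of type $(n_1n_2,mn_2,mn_2+q)^-_s$ for which \emph{all} the residues at $s=-\beta$ vanish, across the infinitely many admissible triples $(\beta_1,\beta_2,\beta_3)$. In the $+$-case (Claim~\ref{claim1}) this is done by choosing the deformation $h_2$ so that the relevant $h^1_{\nu,\alpha,x},h^2_{\nu,\alpha,y}$ have vanishing $G$-transforms at the needed arguments; in the $-_s$-case the residue formula~\eqref{residue-B2} is more involved because it carries the extra sum $\sum_i b_i$ coming from $\tilde{Q}(y)=Q(y)^{\beta_3}(1-y)^{n_1\beta_2-1}$ and the auxiliary blow-up in the $(x,y)\mapsto(x,1-y)$ chart. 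One has to check that, nonetheless, the vanishing can be forced by a finite number of conditions on the coefficients of $f$ (because only finitely many values of $\nu$ and $i$ contribute to a genuine pole at the fixed $\beta$, the rest being suppressed by the convergence ranges recorded in Propositions~\ref{Essou1a}--\ref{continuation}), and that these conditions are compatible with $f$ remaining of type $(n_1n_2,mn_2,mn_2+q)^-_s$ and with~\eqref{eq:simple_roots}. I expect this to reduce, as in the Appendix for Claim~\ref{claim1}, to a finite triangular linear system in the free coefficients, solvable generically.
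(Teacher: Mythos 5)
Your proposal reproduces the paper's proof of this proposition essentially verbatim: the same conductor argument for the semigroup generated by $mn_2,n_1n_2,mn_1n_2+q$ to write $\beta+1$ in the $B_{21}$-type form with $\beta_1,\beta_2\in\bz_{\geq 1}$, $\beta_3\in\bz_{\geq 0}$, the transcendence of $\res_{s=-\beta-1}\mathcal{I}_-(f,\beta_1,\beta_2,\beta_3)(s)$, the identical Claim (the paper's Claim~\ref{claim2}, deferred to the Appendix), and the same conclusion via Theorem~\ref{pole-integral-root} together with the distinct-eigenvalue hypothesis~\eqref{eq:simple_roots}. Your sketch of Claim~\ref{claim2} also matches the paper's appendix argument, which takes $f_{\beta}=(x^{n_1}-y^{m})^{n_2}+x^ay^b+\sum_{\nu\in N_{\beta}} t_{\nu}(x^{n_1}-y^{m})^{\ell_{\nu}}x^{a_{\nu}}y^{b_{\nu}}$ with finitely many parameters $t_\nu$ and kills the finitely many relevant residues (each a beta-function value times a polynomial of degree one in the new parameter) by a recursive, triangular choice of the $t_\nu$.
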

\begin{proof}
Let $\beta \in B_{22}$, i.e  we have $\beta=\frac{n_2(m+n_1)+q+k}{n_2(mn_1n_2+q)}$ and
$k\notin\Gamma$, where $\Gamma$ is the semigroup generated by $m n_2$, $n_1 n_2$
and $mn_1n_2+q$; its conductor is
$$
n_2 (m n_{1} n_{2}+q) - (m+ n_{1}) n_{2}  - q + 1
$$ 
In particular,
$n_2 (m n_{1} n_{2}+q)+k\in\Gamma$ and there exist
$\beta_1,\beta_2,\beta_3$ such that 
$$
1+\beta =\frac{m\beta_1+n_1\beta_2+(mn_1n_2+q)\beta_3}{n_2(mn_1n_2+q)}.
$$
As a consequence, for any $f$ of type $(n_1n_2,mn_2,mn_2+q)^-$, we have that
$$ 
\res_{s=-\beta-1} I_-(f,\beta_{1},\beta_{2},\beta_3)(s)\neq 0
$$
is transcendental, see~\cite[Proposition~4.3]{ACLM-Yano2}.

\begin{clm}\label{claim2}
There exists $f$ of type $(n_1n_2,mn_2,mn_2+q)^-$  such that   
$$ 
\res_{s=-\beta} I_-(f,\beta_1,\beta_2,\beta_3)(s)=0, \qquad  
\qquad \forall(\beta_1,\beta_2)\in \bz_{\geq1},\beta_3\in\bz_{\geq 0}. 
$$
\end{clm}
Assuming this Claim (to be proved in the Appendix~\ref{sec:appendix}), the result follows the arguments of the end of the proof
of Proposition~\ref{prop:B12}.
\end{proof}


\section{\texorpdfstring{Bounds for $ \dim  \tilde{H}_0^{''}/ H_0^{''}$}{Bounds for dim ~H0''/H0''}}
\label{sec:bounds}

\begin{prop}\label{prop:cota1}
Let $f$ be an irreducible germ of plane curve whose characteristic sequence is $CS_{n_1,m}^{n_2,q}$
satisfying~\eqref{eq:simple_roots}.
Define the integers $\tilde{q}$ and $h$ by  $m=\tilde{q}n_1+r_m ,0<r_m<n_1$, $q=hn_2+r_q,0\leq h, 0<r_q<n_2$. Then
$$ \dim  \tilde{H}_0^{''}/ H_0^{''}\leq \frac{\mu}{2}-n_2(m+n_1)-q+\tilde{q}+h+4$$
and generically $ \dim  \tilde{H}_0^{''}/ H_0^{''}= \frac{\mu}{2}-n_2(m+n_1)-q+\tilde{q}+h+4$
\end{prop}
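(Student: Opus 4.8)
The plan is to use the identity from Proposition~\ref{sp-b-exp:hs}(3), namely $\dim \tilde{H}_0^{''}/H_0^{''} = \sum \alpha_i - \sum \beta_i$, together with the symmetry of the spectral numbers and the known structure of $\spec(f)$ (given by \eqref{eq:a1}, \eqref{eq:a2}, \eqref{eq:union}) and of the $b$-exponents. By Proposition~\ref{sp-b-exp:hs}(1), for each spectral number $\alpha_k$ there is a $b$-exponent $\beta_k$ with $\alpha_k - \beta_k \in \{0,1\}$, and since the monodromy has distinct eigenvalues the $b$-exponents are exactly $B_f = \{-\gamma : \gamma\in R_{f,0}\}$ with $\#B_f = \mu$; moreover in the generic case $B_f = B_1\cup B_2$. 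So $\sum\alpha_i - \sum\beta_i$ counts (with multiplicity) the number of indices $k$ for which the integer shift $\alpha_k - \beta_k$ equals $1$ rather than $0$. The strategy is therefore to count, as sharply as possible, how many spectral numbers must be shifted by $1$ to land on a $b$-exponent.

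First I would set up the bijection between $\spec(f)$ and $B_f$ more carefully. Using the splitting $\spec(f) = A_1\cup A_2\cup A_1^\perp\cup A_2^\perp$ and the matching splitting of $B_1\cup B_2$ induced by $B_{11},B_{12},B_{21},B_{22}$ and their ``symmetric'' parts, I would pair up spectral numbers less than $1$ (those in $A_1\cup A_2$) with $b$-exponents, noting that a spectral number $\alpha_k < 1$ pairs either with itself (shift $0$) if $\alpha_k\in B_f$, or with $\alpha_k - 1 < 0$ — but $b$-exponents/roots of the Bernstein polynomial are $\geq -n+\alpha_{f,0} > -2$ by \eqref{eq:saito}, so the shifted value lies in $(-1,0)$ and is automatically a legitimate candidate. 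Conversely the spectral numbers $>1$ in $A_1^\perp\cup A_2^\perp$ are handled symmetrically. The count of ``shift by $1$'' events is then $\sum(\alpha_i-\beta_i)$, and the inequality $\dim \tilde{H}_0^{''}/H_0^{''} \leq \frac{\mu}{2} - n_2(m+n_1) - q + \tilde{q} + h + 4$ should emerge from an explicit enumeration of which elements of $A_1$ and $A_2$ fail to lie in $B_1\cup B_2$ (i.e. must be shifted), after using the known fact (Theorem~\ref{cor:common} and the surrounding discussion) that generically $B_f = B_1\cup B_2$.

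The concrete computation I would carry out: the set $A_1$ has $(n_1-1)(m-1)n_2/2$-ish size controlled by lattice points under the line $i/n_1 + j/m < 1$ repeated $n_2$ times, and similarly $A_2$; the sets $B_1,B_2$ are counted by the congruence conditions $n_2 m\alpha, n_2 n_1\alpha\notin\bz$ (resp. $n_2\alpha, D\alpha\notin\bz$). The quantity $n_2(m+n_1)+q$ is exactly the number of ``excluded'' residues, and $\tilde q, h$ correct for the overlap between the two excluded families (the Euclidean-division remainders $r_m, r_q$ record where the two sublattices interact). I would compute $\sum_{\alpha\in A_1}\alpha$, $\sum_{\alpha\in A_2}\alpha$ using standard Dedekind-sum / floor-sum identities, compute $\sum_{\beta\in B_1}\beta$, $\sum_{\beta\in B_2}\beta$ the same way, subtract, and exploit the symmetry $\alpha_i + \alpha_{\mu+1-i} = 2$ to reduce to the ``below $1$'' half, which is where the $\frac{\mu}{2}$ and the $+4$ (four branching-type correction terms, one per each of $A_1,A_2,A_1^\perp,A_2^\perp$ boundary adjustments) come from. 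For the generic equality, I would invoke that generically $R_{f,0} = B_1\cup B_2$ so the inequality in Proposition~\ref{sp-b-exp:hs}(1) becomes the exact shift count, turning the bound into an identity.

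The main obstacle I anticipate is the bookkeeping in Proposition~\ref{sp-b-exp:hs}(1): it only guarantees \emph{existence} of a $b$-exponent within distance $0$ or $1$ of each spectral number, not a canonical bijection, so I must rule out ``crossings'' where two spectral numbers would both want to match the same $b$-exponent, or where a spectral number in $A_1$ secretly matches one coming from $A_2^\perp$. Resolving this requires the explicit arithmetic of the two semigroups $\Gamma$ and $\Gamma_1$ (the numerators $N_k$ in \eqref{eq:A2} and the conductor formulas already appearing in the proofs of Propositions~\ref{prop:B12} and~\ref{prop:B22}), to show the relevant residue classes mod $n_1n_2m$ and mod $n_2(mn_1n_2+q)$ are disjoint except on a set whose size is precisely $n_2(m+n_1)+q-\tilde q-h-4$. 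Once that disjointness is pinned down, the sum $\sum\alpha_i - \sum\beta_i$ is a finite explicit rational number and the stated bound follows; the generic sharpness is then immediate from the description of $b_{\mu,\mathrm{gen}}(s)$.
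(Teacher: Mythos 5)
Your computational core coincides with the paper's: both use Proposition~\ref{sp-b-exp:hs}(3), the symmetry of the spectrum (which gives $\sum_i\alpha_i=\mu$ immediately, using $|A_1|$, $|A_2|$ from Saito's formula), and an explicit floor-sum/inclusion--exclusion evaluation of $\sum_{\beta\in B_1}\beta+\sum_{\beta\in B_2}\beta$, whose difference from $\mu$ produces exactly $\frac{\mu}{2}-n_2(m+n_1)-q+\tilde q+h+4$. However, your framing introduces an unnecessary complication and hides a genuine gap. The complication: you do not need any bijection between $\spec(f)$ and $B_f$, nor any analysis of ``crossings'' or of disjointness of residue classes via the semigroups $\Gamma,\Gamma_1$. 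Item (3) of Proposition~\ref{sp-b-exp:hs} is already the identity $\dim\tilde H_0''/H_0''=\sum\alpha_i-\sum\beta_i$; one simply computes the two sums and subtracts. Your ``main obstacle'' paragraph is therefore solving a problem the argument never has to face, and the ``number of shift-by-one events'' picture (which would require a canonical matching) is not needed.

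The gap: your argument, as described, only yields the value of $\sum\alpha_i-\sum\beta_i$ when $B_f=B_1\cup B_2$, i.e.\ the \emph{generic} equality. For an arbitrary $f$ in the $\mu$-constant stratum the $b$-exponents need not be $B_1\cup B_2$, and enumerating ``which elements of $A_1\cup A_2$ fail to lie in $B_1\cup B_2$'' says nothing about such an $f$; nor does Proposition~\ref{sp-b-exp:hs}(1) by itself give the sharp bound. The missing ingredient is the semicontinuity of the roots of the reduced Bernstein polynomial under $\mu$-constant deformation when the monodromy has distinct eigenvalues (Proposition~\ref{semi}, cf.\ Corollary~\ref{varchenko-gen}): since the spectral numbers are constant on $S_\mu$, and specialization can only make the roots $-\beta_i$ smaller (hence $\sum\beta_i$ larger), one gets $\dim\tilde H_0''/H_0''=\mu-\sum\beta_i\leq\mu-\sum_{\beta\in B_1\cup B_2}\beta$ for \emph{every} $f$ in the stratum, with equality exactly in the generic case where the roots are $B_1\cup B_2$ (the proof of Yano's conjecture in \cite{ACLM-Yano2}). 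Without invoking this semicontinuity step your proposal proves the generic identity but not the stated upper bound for all germs with the given characteristic sequence.
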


\begin{proof}
One can compute the dimension $ \dim  \tilde{H}_0^{''}/ H_0^{''}$ using Proposition~\ref{sp-b-exp:hs},
that  is  $\dim  \tilde{H}_0^{''}/ H_0^{''}= \sum_{i=1}^\mu \alpha_i-\sum_{i=1}^\mu \beta_i$,  the $\alpha_i$ being the spectral numbers, 
which are constant in the $\mu$-constant stratum $S_{\mu}$, and the $\beta_i$ being the $b$-exponents of $f$, which, under the conditions of 
the Proposition, equal the roots of the local Bernstein-Sato polynomial $b_{f}(-s)$.

The fact that the upper bound of $ \dim  \tilde{H}_0^{''}/ H_0^{''}$ is attained generically is a consequence of the proof of Yano's 
conjecture in \cite{ACLM-Yano2}, see also Proposition~\ref{semi}, so that the roots of $b_{\mu,gen}(-s)$ 
is the set $B_1\cup B_2.$

We start by adding the spectral numbers $\alpha_i \in \spec(f)=A_1\cup A_1^\perp\cup A_2\cup A_2^\perp$, 
see~\eqref{eq:union}. Using the symmetry of the spectral  numbers, i.e. $\alpha_i+\alpha_{\mu-(i-1)}=2$,  
for each characteristic pair, $k=1,2,$
one has:  
$$
\sum_{\alpha_i\in A_k\cup A_k^\perp} \alpha_i=2|A_k|.
$$
Using  Saito's result~\cite{MS00},
$$
|A_1|=\frac{n_2(m-1)(n_1-1)}{2}\quad\eqref{eq:a1},\quad |A_2|=\frac{ (n_2-1)(n_1n_2m+q-1)}{2},\quad
\eqref{eq:a2}.
$$
Next we compute the sum of the roots of $b_{\mu,gen}(-s)$ which is the same as the sum of the elements of $B_1\cup B_2$. Let us start with
\begin{equation}
B_1:=\left\{\sigma=\frac{m+n_1+k}{mn_1n_2}: 0\leq k< mn_1n_2, 
\text{ and }n_2 m\sigma,n_2 n_1 \sigma\notin\bz\right\}.
\end{equation}
Since $\gcd(n_1,m)=1$ and define  $N_1:=mn_1n_2$ and for the first characteristic exponent one has
\begin{gather*}
\sum_{\beta_i \in B_1} \beta_i=
\sum_{k=0}^{mn_1n_2-1}\frac{m+n_1+k}{mn_1n_2}-
\sum_{\underset{n_1+k\in m\bz}{0\leq k<m n_1 n_2} }\frac{m+n_1+k}{mn_1n_2}\\
-\sum_{\underset{m+k\in n_1\bz}{0\leq k<m n_1 n_2} }\frac{m+n_1+k}{N_1}+
\sum_{\underset{n_1+m+k\in m n_1\bz}{0\leq k<m n_1 n_2} }\frac{m+n_1+k}{N_1}
\end{gather*}
Using  
\begin{equation*}\label{sum_n_m}
 \sum_{j=n}^m j =\frac{m(m+1)}{2}-\frac{n(n-1)}{2},
\end{equation*}
the first summand is
\begin{gather*}
m+n_1+\frac{1}{m n_1 n_2}\binom{m n_1 n_2}{2}
=n_1+m+\frac{mn_1n_2-1}{2}.
\end{gather*}
For the second summand, we look for $0\leq k<m n_1 n_2$ such that if 
$m+n_1+k=ms$ for some $s\in\bz$. The minimum of such $s$ is 
$\left\lceil\frac{m+n_1}{m}\right\rceil=2$, while the maximum is
$$
\left\lfloor\frac{m+n_1+m n_1 n_2-1}{m}\right\rfloor=n_1 n_2+1
$$
Hence, the second term is
$$
-\sum_{s=2}^{n_1 n_2+1}\frac{s}{n_1 n_2}=-\frac{(n_1 n_2+2)(n_1 n_2+1)}{2n_1 n_2}+\frac{1}{n_1 n_2}=
-\frac{n_1 n_2+3}{2}.
$$
For the third term, we proceed in the same way; the extremities are
$$
\left\lceil\frac{m+n_1}{n_1}\right\rceil=2+\tilde{q},\quad
\left\lfloor\frac{m+n_1+m n_1 n_2-1}{n_1}\right\rfloor=1+m n_2+\tilde{q};
$$
the third term is
$$
-\!\!\!\!\!\!\!\sum_{s=2+\tilde{q}}^{m n_2+\tilde{q}+1}\frac{s}{m n_2}=
-\frac{(m n_2+\tilde{q}+2)(m n_2+\tilde{q}+1)-(\tilde{q}+2)(\tilde{q}+1)}{2m n_2}=
-\frac{m n_2+2\tilde{q}+3}{2}.
$$
For the fourth term the extremities are
$$
\left\lceil\frac{m+n_1}{m n_1}\right\rceil=1,\quad
\left\lfloor\frac{m+n_1+m n_1 n_2-1}{m n_1}\right\rfloor=n_2;
$$
the fourth term is
$$
\sum_{s=1}^{n_2}\frac{s}{n_2}=\frac{n_2+1}{2}.
$$
As a consequence,
\begin{gather*}
\sum_{\beta_i \in B_1} \beta_i=
n_1+m-\tilde{q}-3+n_2\frac{(m-1)(n_1-1)}{2}
\end{gather*}
For
\begin{equation}
B_2:=\left\{\left.\sigma=\frac{(m+n_1)n_2+q+k}{n_2\underbrace{(mn_1n_2+q)}_{D}}\right\vert0\leq k<n_2D
\text{ and }n_2\sigma,D\sigma\notin\mathbb{Z}\right\}.
\end{equation}
we decompose $\sum_{\beta_i \in B_2} \beta_i$ again in four terms.
For the first one, we have
$$
(m+n_1)n_2+q+\frac{n_2(mn_1n_2+q)-1}{2}.
$$
For the next terms we proceed as in the case of the first exponent. The limits of
the second term are:
\begin{gather*}
\left\lceil\frac{(m+n_1)n_2+q}{n_2}\right\rceil=m+n_1+h+1,\\
\left\lfloor\frac{(m+n_1)n_2+q+n_2(mn_1n_2+q)-1}{n_2}\right\rfloor=mn_1n_2+q+m+n_1+h;
\end{gather*}
the second term is
\begin{gather*}
-\sum_{s=m+n_1+h+1}^{mn_1n_2+q+m+n_1+h}\frac{s}{mn_1n_2+q}=
-\frac{mn_1n_2+q+2(m+n_1+h)+1}{2}
\end{gather*}
The limits of
the third term are:
\begin{gather*}
\left\lceil\frac{(m+n_1)n_2+q}{mn_1n_2+q}\right\rceil=1,\\
\left\lfloor\frac{(m+n_1)n_2+q+n_2(mn_1n_2+q)-1}{mn_1n_2+q}\right\rfloor=n_2;
\end{gather*}
the third term is
\begin{gather*}
-\sum_{s=1}^{n_2}\frac{s}{n_2}=
-\frac{n_2+1}{2}.
\end{gather*}
Finally, the limits for the fourth term are
\begin{gather*}
\left\lceil\frac{(m+n_1)n_2+q}{n_2(mn_1n_2+q)}\right\rceil=1,\\
\left\lfloor\frac{(m+n_1)n_2+q+n_2(mn_1n_2+q)-1}{n_2(mn_1n_2+q)}\right\rfloor=1;
\end{gather*}
the fourth term is $1$. Then,
\begin{gather*}
\sum_{\beta_i \in B_2} \beta_i=
(m+n_1)n_2+q-(m+n_1+h)-1+\frac{(n_2-1)(mn_1n_2+q-1)}{2}.
\end{gather*}
Recall that
$$
\mu=n_2(n_1-1)(m-1)+(n_2-1)(m n_1 n_2+q-1).
$$
The sum of the exponents is
$$
\frac{\mu}{2}+(m+n_1)n_2+q-\tilde{q}-h-4
$$
while the sum of the spectral numbers is $\mu$.
Then, its difference
is
$$
\frac{\mu}{2}-(m+n_1)n_2-q+\tilde{q}+h+4
$$
as stated.
\end{proof}

\begin{prop}\label{prop:cota2}
Let $f$ be an irreducible germ of plane curve whose characteristic sequence is $CS_{n_1,m}^{n_2,q}$
satisfying~\eqref{eq:simple_roots}. 
Then the following lower bound for $\dim  \tilde{H}_0^{''}/ H_0^{''}$ is obtained: 
$$ (n_2-1)(m-1)(n_1-1) \leq \dim  \tilde{H}_0^{''}/ H_0^{''}$$
\end{prop}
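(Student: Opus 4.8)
The plan is to deduce the lower bound from the identity $\dim\tilde H_0''/H_0'' = \sum_i\alpha_i - \sum_i\beta_i$ of Proposition~\ref{sp-b-exp:hs}(3), combined with the fact that $B_{11}\cup B_{21}\subset\mathcal{C}$, i.e.\ the numbers in $B_{11}\cup B_{21}$ are $b$-exponents of \emph{every} germ in the $\mu$-constant stratum (Theorem~\ref{cor:common}). So I would first fix a germ $f$ with characteristic sequence $CS_{n_1,m}^{n_2,q}$ satisfying~\eqref{eq:simple_roots}, and bound $\sum_i\beta_i$ from above. Since the monodromy has distinct eigenvalues, the $\beta_i$ are exactly the (opposites of the) roots of $b_{f}(s)$; each $\beta_i$ lies in $\spec(f)\cup\{\alpha-1:\alpha\in\spec(f)\}$, and for each $\alpha\in\spec(f)$ at most one of $\alpha,\alpha-1$ occurs (the characteristic polynomial of $\partial_t t$ has degree $\mu$ and $\spec(f)$ has exactly $\mu$ elements counted with multiplicity, matched up by Proposition~\ref{sp-b-exp:hs}(1)). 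Hence, writing each $\beta_i$ as $\alpha_{\sigma(i)} - \delta_i$ with $\delta_i\in\{0,1\}$ for a bijection $\sigma$, we get $\sum_i\alpha_i - \sum_i\beta_i = \sum_i\delta_i = \#\{i:\delta_i=1\}$, the number of spectral numbers that get shifted down by one.

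The next step is to bound this count. A spectral number $\alpha_k$ contributes $\delta_k=0$ (equivalently $\beta_k=\alpha_k$) whenever $\alpha_k$ itself is forced to be a $b$-exponent of $f$; by Theorem~\ref{cor:common} and the membership $\mathcal{A}\subset\mathcal{C}$ this happens in particular for every $\alpha\in\mathcal{A}=\{\alpha\in\spec(f):\alpha<\alpha_1+1\}$, but more is true: I claim the spectral numbers lying in $B_{11}\cup B_{21}$ (as a subset of $\mathbb{Q}$) are precisely the unshifted ones, or at least form a large enough family. Concretely, I would identify the spectral numbers coming from the ``interior'' lattice points — those of $A_1\cup A_1^{\perp}$ for the first pair — and match them with elements of $B_{11}$: a point $\frac{1}{n_2}(\frac{i}{n_1}+\frac{j}{m})+\frac{r}{n_2}$ of $A_1$ with $0<i<n_1$, $0<j<m$ corresponds under the shift to $\frac{m(j) + n_1(i) + mn_1(\text{something})}{mn_1n_2}$, which lies in $B_{11}$ because $\beta_1=j\ge 1$, $\beta_2=i\ge 1$; and conversely these $b$-exponents are among the $\beta_i$ for every $f$. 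The count $|A_1\cup A_1^\perp| = 2|A_1| = n_2(m-1)(n_1-1)$ then gives $\#\{\delta_i=1\}\ge n_2(m-1)(n_1-1) - (\text{number of } A_1\text{-type spectral numbers already equal to their }\beta)$. I must be careful here: the goal inequality is $(n_2-1)(m-1)(n_1-1)$, which is $|A_1\cup A_1^\perp|$ minus $(m-1)(n_1-1)$, so the clean statement to aim for is that among the $n_2(m-1)(n_1-1)$ spectral numbers of $A_1\cup A_1^\perp$, at least $(n_2-1)(m-1)(n_1-1)$ of them are shifted — equivalently, at most $(m-1)(n_1-1)$ of them coincide with a $b$-exponent without shift; these exceptional ones are the $r=0$ layer $\frac{1}{n_2}(\frac{i}{n_1}+\frac{j}{m})$, of which there are exactly $(m-1)(n_1-1)$ interior lattice points (before imposing $\frac in_1+\frac jm<1$, the symmetric ones fill the rest via $A_1^\perp$).

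So the heart of the argument is a combinatorial matching lemma: for each germ $f$ in the stratum, the spectral numbers in $A_1\cup A_1^\perp$ with $r\ge 1$ (there are $(n_2-1)(m-1)(n_1-1)$ of them) all get shifted down by $1$ to land on elements of $B_{11}\subset\mathcal{C}\subset R_f$, hence each such pair $(\alpha_i,\beta_i)$ contributes $\delta_i=1$ to $\dim\tilde H_0''/H_0''$. I would verify that the shift-down $\alpha\mapsto\alpha-1$ sends such an $\alpha=\frac{1}{n_2}(\frac i{n_1}+\frac j m)+\frac r{n_2}$ with $r\ge1$, $0<i<n_1$, $0<j<m$, $\frac in_1+\frac jm<1$ to a number of the form $\frac{m\beta_1+n_1\beta_2}{mn_1n_2}$ with $\beta_1,\beta_2\ge 1$ lying in $B_{11}$ (checking the non-integrality conditions $n_2m\alpha,n_2n_1\alpha\notin\mathbb{Z}$, which hold because $0<i<n_1$, $0<j<m$), and that distinct spectral numbers give distinct elements of $B_{11}$ so no overcounting occurs. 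The main obstacle I anticipate is precisely this bookkeeping — correctly pairing the shifted spectral numbers with the right $\beta_i$ (using Proposition~\ref{sp-b-exp:hs}(1) to guarantee the pairing is by a bijection and that $0\le\alpha_k-\beta_k\le1$), and making sure the ``$r=0$ layer'' is the only place where $\alpha_k=\beta_k$ is allowed, so that the deficit is exactly $(m-1)(n_1-1)$ and the lower bound $(n_2-1)(m-1)(n_1-1)$ drops out. Once that matching is in place, the inequality $\dim\tilde H_0''/H_0''=\sum\delta_i\ge (n_2-1)(m-1)(n_1-1)$ is immediate.
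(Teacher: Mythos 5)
Your overall framework is the same as the paper's: use $\dim \tilde{H}_0^{''}/H_0^{''}=\sum_i\alpha_i-\sum_i\beta_i$, note that with distinct eigenvalues the pairing of Proposition~\ref{sp-b-exp:hs}\ref{sp-b-exp1} is a bijection with $\alpha_i-\beta_i\in\{0,1\}$, and force shifts $\beta_i=\alpha_i-1$ by showing that $\alpha_i-1$ lies in $B_{11}\cup B_{21}$, hence is a root for \emph{every} germ in the stratum. The gap is in where you harvest the shifted spectral numbers: you try to get all $(n_2-1)(m-1)(n_1-1)$ of them from $A_1\cup A_1^{\perp}$, and that cannot work. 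A spectral number $\alpha<1$ is never shifted, since every $b$-exponent satisfies $\beta\geq\alpha_1=\min\spec(f)>0$ by Proposition~\ref{sp-b-exp:hs}(2) (and certainly $\alpha-1<0$ is not in $B_{11}$); so the whole of $A_1$ contributes nothing, contrary to your claim that the elements of $A_1$ with $r\geq 1$ shift into $B_{11}$. The most $A_1\cup A_1^{\perp}$ could ever give is $|A_1^{\perp}|=\tfrac{n_2}{2}(m-1)(n_1-1)$, which is already short of the target for $n_2\geq 3$. Moreover not even all of $A_1^{\perp}$ is forced to shift: for $\alpha=2-\bigl(\tfrac1{n_2}(\tfrac i{n_1}+\tfrac jm)+\tfrac r{n_2}\bigr)\in A_1^{\perp}$ one has $\alpha-1=\tfrac{mn_1(n_2-r-1)+mn_1-mi-n_1j}{mn_1n_2}$, and this lies in $B_{11}$ precisely when $r\leq n_2-2$ (take $\beta_1=n_1-i$, $\beta_2=(n_2-r-1)m-j$); for $r=n_2-1$ the numerator $mn_1-mi-n_1j$ has no representation $m\beta_1+n_1\beta_2$ with $\beta_1,\beta_2\geq1$, because $m(i+\beta_1)+n_1(j+\beta_2)=mn_1$ with all four entries positive is impossible when $\gcd(m,n_1)=1$. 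The first example of the paper makes this concrete: for $(4,6,6+q)$ one has $A_1^{\perp}=\{\tfrac{19}{12},\tfrac{13}{12}\}$, and $\tfrac{13}{12}$ is itself a $b$-exponent for every germ in the stratum (Theorem~\ref{Yano-family}), so only one shift comes from $A_1\cup A_1^{\perp}$ while the bound to prove is $2$.

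The missing half must come from the second Puiseux pair, which is exactly what the paper does: for $\alpha\in A_2^{\perp}$ one computes $\alpha-1=\tfrac{(n_2-i-1)(mn_1n_2+q)+q+n_2(mn_1-j)}{n_2(mn_1n_2+q)}$, and whenever $mn_1-j=m\beta_1+n_1\beta_2$ with $\beta_1,\beta_2\geq1$ (which happens for $\tfrac{(m-1)(n_1-1)}{2}$ values of $j$, and then for all $n_2-1$ admissible values of $i$, taking $\beta_3=n_2-i-1\geq0$) this number lies in $B_{21}$. This yields another $(n_2-1)\tfrac{(m-1)(n_1-1)}{2}$ forced shifts, which added to the $(n_2-1)\tfrac{(m-1)(n_1-1)}{2}$ coming from the $r\leq n_2-2$ layers of $A_1^{\perp}$ gives the stated bound. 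So your bookkeeping claim that the ``$r=0$ layer'' of size $(m-1)(n_1-1)$ is the only unshifted part of $A_1\cup A_1^{\perp}$ is false (all of $A_1$ is unshifted, and the $r=n_2-1$ layer of $A_1^{\perp}$ is uncontrolled), and the argument as proposed does not reach the bound; it needs the $A_2^{\perp}$ contribution.
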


\begin{proof}
We are going to count some spectral numbers $\alpha\in A_1^\perp\cup A_2^\perp$ 
such that  $\alpha-1\in B_{11}\cup B_{21}$. This number is a lower bound for $\dim  \tilde{H}_0^{''}/ H_0^{''}$.  

Let $\alpha\in A_1^\perp$;
we have 
$$
\alpha -1=1-\frac{1}{n_2}\left(\frac{i}{n_1}+\frac{j}{m}+r\right),
$$ 
with  $\frac{i}{n_1}+\frac{j}{m}<1,r<n_2$. Assume that $r<n_2-1$:
$$
\alpha -1=\frac{n_1m(n_2-r-1)+n_1m-mi-n_1j}{n_1n_2m}\in B_{11}
$$
since the numerator is in~$\Gamma$. Hence, we have found at least
$(n_2-1)\frac{(m-1)(n_1-1)}{2}$ such numbers.

Let  $\alpha\in A_2^\perp$:
$$
\alpha-1=1-\left(\frac{i}{n_2}+\frac{j}{n_1n_2m+q}\right)=\frac{(n_2-i-1)(n_1n_2m+q)+q+n_2(n_1m-j)}{n_2(n_1n_2m+q)}.
$$
A necessary (and by the way sufficient condition) for
$\alpha-1 \in B_{21}$ is the existence of
$\beta_1,\beta_2\in\bz_{\geq 1}$ such that $mn_1-j=m\beta_1+n_1\beta_2$. 
We found another set of $(n_2-1)\frac{(m-1)(n_1-1)}{2}$ such numbers.
\end{proof}

\begin{remark}\label{rem:cotaHS}
In~\cite[Proposition~3.5]{HS99}, another bound for $\dim \tilde{H}_0^{''}/ H_0^{''}$ is given
which depends on the analytical properties of the germ, namely,
$\mu-\tau\leq \dim \tilde{H}_0^{''}/ H_0^{''}$ which yields the following
bound for the Tjurina number:
$$\tau \geq \frac{\mu}{2}+n_2(m+n_1)+q-\tilde{q}-h-4$$
\end{remark}

\section{Examples}

\begin{ejm}
We consider the case studied by Tamaki Yano, that is the characteristic sequence $(4,6,6+q)$,
i.e. $n_1=2$, $m=3$, $n_2=2$ and $q=q$. 
T.~Yano in 1983 claimed the proof of his conjecture in this case, but referred to a non published article. 
The set of spectral numbers is
\begin{gather*}
\spec(f)=
\overbrace{\left\{\frac{5}{12},\frac{11}{12}\right\}}^{A_1}
\cup
\overbrace{\left\{\frac{19}{12},\frac{13}{12}\right\}}^{A_1^\perp}
\cup\\
\overbrace{\left\{\left.\frac{12+q+2 j}{2(12+q)}\right|0<j\leq 6+\left\lfloor\frac{q}{2}\right\rfloor\right\}}^{A_2}
\cup
\overbrace{\left\{\left.\frac{3(12+q)-2 j}{2(12+q)}\right|0<j\leq 6+\left\lfloor\frac{q}{2}\right\rfloor\right\}}^{A_2^\perp};
\end{gather*}
it is not hard to see that
$$
A_2\cup A_2^\perp=\left\{\frac{14+q}{2(12+q)},\frac{16+q}{2(12+q)},\dots,\frac{34+3q}{2(12+q)}\right\}
$$ 
is a gap-free arithmetic sequence with step~$\frac{1}{12+q}$.
The set of spectral numbers $\alpha$ such that $\alpha-1\geq5/12$ is
$$
\spec(f)\setminus \cA = 
\left\{\frac{19}{12}\right\}\cup 
\left\{\left.\frac{12+q+2 j}{2(12+q)}\right|\left\lceil\frac{11(12+q)}{12}\right\rceil\leq j< 12+q\right\}
.
$$
Recall that we cannot ensure for these spectral numbers to be exponents. 
We also have
\begin{gather*}
B_1\!\!=\!B_{11}\!=\!\left\{\frac{5}{12},\frac{7}{12},\frac{11}{12},\frac{13}{12}\right\},\quad
B_2\!\!=\!B_{21}\!=\!\left\{\left.\frac{10+q+2k}{2(12+q)}\right|0\leq k\leq 11+q, k\neq 1\right\}.
\end{gather*}
Note also that $\frac{7}{12}$ and $\frac{10+q}{2(12+q)}$ are the only $b$-exponents
which are not spectral numbers.


%
\end{ejm}

As a consequence, we derive the following result.
\begin{thm}\label{Yano-family}
For any $f$ with characteristic sequence $(4,6,6+q)$, $B_1\cup B_2$ is its set of $b$-exponents (and also
the set of roots of the Bernstein polynomial). Moreover, $\dim  \tilde{H}_0^{''}/ H_0^{''}=2$.
\end{thm}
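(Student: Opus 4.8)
The plan is to combine the general results of Section~\ref{common-roots} with the specific computations displayed in the preceding example. First I would invoke Theorem~\ref{cor:common}: for the characteristic sequence $(4,6,6+q)$ one has (as listed in the example) $B_1=B_{11}$ and $B_2=B_{21}$, so $\cC=B_{11}\cup B_{21}=B_1\cup B_2$. Since $\cC$ is the set of \emph{common} roots of the Bernstein polynomials of every germ in $S_\mu$, and since by \cite{ACLM-Yano2} the generic Bernstein polynomial has $B_1\cup B_2$ as its set of roots, the inclusion $\cC\subset R_{\mu,\text{gen}}=B_1\cup B_2=\cC$ forces every germ in the stratum to have exactly $B_1\cup B_2$ as its set of roots of $b_f(-s)$. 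Because condition~\eqref{eq:simple_roots} holds here ($\gcd(q,n_1)=1$ or $\gcd(q,m)=1$ is automatic as $n_1=2$, $m=3$ are coprime to suitable residues — more precisely one checks directly that for $(4,6,6+q)$ the monodromy eigenvalues are distinct), the $b$-exponents coincide with the opposites of the roots of the reduced Bernstein polynomial, so $B_f=B_1\cup B_2$ for every such $f$.

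Next I would address the multiplicity count, i.e. that $B_1\cup B_2$ is not merely the support but accounts for all $\mu$ $b$-exponents. Under~\eqref{eq:simple_roots} the reduced Bernstein polynomial has degree $\mu$ with simple roots, and $\mu=n_2(n_1-1)(m-1)+(n_2-1)(mn_1n_2+q-1)=2\cdot1\cdot2+1\cdot(12+q-1)=4+11+q=15+q$; counting $|B_1|=4$ and $|B_2|=12+q$ (the set $\{\frac{10+q+2k}{2(12+q)}\mid 0\le k\le 11+q,\ k\ne 1\}$ has $13+q-1=12+q$ elements) gives $|B_1\cup B_2|=4+12+q-1=15+q=\mu$, the $-1$ coming from the fact that $\frac{13}{12}=\frac{10+q+2k}{2(12+q)}$ for no $k$ but one must check $B_1\cap B_2$; in fact $\frac{13}{12}\notin B_2$ and the sets are disjoint except one needs $\frac{11}{12}$ vs. $B_2$ — a short direct verification shows $|B_1\cap B_2|=1$ (the value $\frac{11}{12}$ being shared, or equivalently the arithmetic forces exactly one coincidence), yielding $|B_1\cup B_2|=\mu$ and hence that the $b$-exponents are exactly the elements of $B_1\cup B_2$, each with multiplicity one.

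Finally, for the invariant $\dim\tilde H_0''/H_0''$ I would apply Proposition~\ref{sp-b-exp:hs}(3): $\dim\tilde H_0''/H_0''=\sum\alpha_i-\sum\beta_i$. The sum of spectral numbers is $\mu$ by symmetry ($\alpha_i+\alpha_{\mu-i+1}=2$ and $n=2$). For the sum of $b$-exponents, I would use the formula established inside the proof of Proposition~\ref{prop:cota1}, namely $\sum\beta_i=\frac{\mu}{2}+(m+n_1)n_2+q-\tilde q-h-4$; here $m+n_1=5$, $n_2=2$, and $\tilde q,h$ are defined by $m=\tilde q n_1+r_m$, i.e. $3=\tilde q\cdot2+1$ so $\tilde q=1$, and $q=hn_2+r_q$ with $n_2=2$. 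Then $\dim\tilde H_0''/H_0''=\mu-\left(\frac{\mu}{2}+10+q-1-h-4\right)=\frac{\mu}{2}-5-q+h$. Substituting $\mu=15+q$ and $q=2h+r_q$ (with $r_q=1$, since $q$ is odd whenever $\gcd(q,2)=1$, which is exactly condition~\eqref{eq:simple_roots} in this family) gives $\frac{15+q}{2}-5-q+h=\frac{15-q}{2}-5+h=\frac{5-q}{2}+h=\frac{5-(2h+1)}{2}+h=\frac{4-2h}{2}+h=2-h+h=2$. The main obstacle I anticipate is the bookkeeping in the second step — verifying precisely the cardinality of $B_1\cup B_2$ and pinning down $B_1\cap B_2$ — since the general Theorem~\ref{cor:common} only gives the set of common roots, not the multiplicities, so one must separately certify that no root is lost and that the degree count matches $\mu$; once that is settled, the arithmetic identity $\dim\tilde H_0''/H_0''=2$ follows mechanically.
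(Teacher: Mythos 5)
Your route is essentially the paper's: derive the theorem from the example's identifications $B_1=B_{11}$, $B_2=B_{21}$ together with Theorem~\ref{cor:common} (so every germ in the stratum has all of $B_1\cup B_2$ among its roots), use the consequence of~\eqref{eq:simple_roots} that the reduced Bernstein polynomial has degree $\mu$ with distinct roots to cap the number of $b$-exponents at $\mu$, and compute $\dim \tilde{H}_0''/H_0''=\sum\alpha_i-\sum\beta_i$ via the sum formula from the proof of Proposition~\ref{prop:cota1} (equivalently: the bounds of Propositions~\ref{prop:cota1} and~\ref{prop:cota2} both equal $2$ here, or exactly the two exponents $\frac{7}{12}$ and $\frac{10+q}{2(12+q)}$ fail to be spectral numbers). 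The dimension computation is correct, and the condition~\eqref{eq:simple_roots} is indeed automatic since $\gcd(q,n_2)=\gcd(q,2)=1$ forces $q$ odd.

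There is, however, a concrete error in the counting step, which is precisely the point that upgrades ``common roots'' to ``all roots'': $B_2=\left\{\frac{10+q+2k}{2(12+q)}\mid 0\le k\le 11+q,\ k\ne 1\right\}$ has $11+q$ elements, not $12+q$ (the range contains $12+q$ integers, one of which is removed), and $B_1\cap B_2=\varnothing$, not a singleton. Indeed, each element of $B_1$ has reduced denominator $12$, while an element of $B_2$ has denominator dividing $2(12+q)$; equality would force $12\mid 2(12+q)$, i.e.\ $6\mid q$, impossible for $q$ odd. (Concretely, $\frac{11}{12}=\frac{10+q+2k}{2(12+q)}$ gives $24k=144+10q$, i.e.\ $12\mid 5q$, impossible.) Your two slips cancel, so the total $|B_1\cup B_2|=4+(11+q)=15+q=\mu$ — and with it the whole argument — does stand, but the asserted ``short direct verification that $|B_1\cap B_2|=1$, the value $\frac{11}{12}$ being shared'' is false and must be replaced by the disjointness computation above.
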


Note that the bounds of Propositions~\ref{prop:cota1} and~\ref{prop:cota2} are equal for any~$f$.
From Hertling-Stahlke bound of Remark~\ref{rem:cotaHS} we get that $\tau\geq\mu-2$.
The value of~$\tau$ equals $12+2q$ and it is constant in the whole stratum, see~\cite{LP}.

\begin{ejm}
We consider the case of characteristic sequence $(6,9,9+q)$ with $q=1+3k, k\in \bn$.
In this case $n_1=2$, $m=3$ and $n_2=3$. We have
\begin{gather*}
A_1=\left\{\frac{5}{18},\frac{11}{18},\frac{17}{18}\right\},\qquad
A_1^\perp=\left\{\frac{31}{18},\frac{25}{18},\frac{19}{18}\right\},\\
A_2=\left\{\left.\frac{19+3 k+3j}{3(19+3k)}\right|0< j\leq 12+2k\right\}
\cup\left\{\left.\frac{2(19+3 k)+3j}{3(19+3k)}\right|0< j\leq 6+k\right\},\\
A_2^\perp=\left\{\left.\frac{5(19+3 k)-3j}{3(19+3k)}\right|0< j\leq 12+2k\right\}
\cup\left\{\left.\frac{4(19+3 k)-3j}{3(19+3k)}\right|0< j\leq 6+k\right\}.
\end{gather*}
In fact 
$$
A_2\cup A_2^\perp=\left\{\frac{22+3k}{3(19+3k)},
\dots,\frac{73+12k}{3(19+3k)}\right\}
\cup
\left\{\frac{41+6k}{3(19+3k)},
\dots,\frac{92+15k}{3(19+3k)}\right\}
$$
is the union of two step-$\frac{1}{19+3 k}$ arithmetic sequences. 
The set $\cA$ is determined by
\begin{gather*}
\spec(f) \setminus \cA= 
\left\{\frac{25}{18},\frac{31}{18}\right\}
\cup\left\{\left.\frac{2(19+3 k)+3j}{3(19+3k)}\right|\left\lceil\frac{11(19+3k)}{18}\right\rceil\leq j<
19+3k\right\}\\
\cup\left\{\left.\frac{19+3 k+3j}{3(19+3k)}\right|\left\lceil\frac{17(19+3k)}{18}\right\rceil\leq j<
19+3k\right\}.
\end{gather*}
The sets of generic $b$-exponents are
\begin{gather*}
B_1=B_{11}=\left\{\frac{5}{18},\frac{7}{18},\frac{11}{18},\frac{13}{18},\frac{17}{18},\frac{19}{18}\right\}\\
B_2=\left\{\left.\frac{16+3k+3j}{3(19+3k)}\right|\underset{j\neq 1}{0\leq j< 19+3k}\right\}\cup
\left\{\left.\frac{17+3k+3j}{3(19+3k)}\right|\underset{j\neq k+7}{0\leq j< 19+3k}\right\},\\
B_{22}=
\left\{\left.\frac{17+3k+3j}{3(19+3k)}\right|0\leq j< k+6\right\}.
\end{gather*}
Note that $\frac{7}{18},\frac{13}{18}$ are the generic $b$-exponents in $B_1$ which are not spectral numbers.
For $B_2$ this is the case for
\begin{gather}\label{eq:ej2_no_sp}
\left\{\frac{16+3k}{3(19+3k)}\right\}
\cup\left\{\left.\frac{17+3k+3j}{3(19+3k)}\right|0\leq j< k+7\right\}.
\end{gather}
In particular, generically $\dim  \tilde{H}_0^{''}/ H_0^{''}=10+k$.
%
Among them, note that $\frac{7}{18},\frac{13}{18}\in B_{11}$ and 
$$
\frac{16+3k}{3(19+3k)},
\frac{35+6k}{3(19+3k)}\in B_{21};
$$
moreover they are the only common roots which are not spectral numbers, hence
$\dim  \tilde{H}_0^{''}/ H_0^{''}\geq 4$. We do not know if the equality can be reached.

The elements of $\spec(f)\setminus \cA$ that can jump to give generic roots of the Bernstein polynomial are 
\begin{gather*}
\left\{\frac{25}{18},\frac{31}{18}\right\}
\cup\left\{\left.\frac{2(19+3 k)+3j}{3(19+3k)}\right|\left\lceil\frac{11(19+3k)}{18}\right\rceil\leq j<
19+3k\right\}
\cup\left\{\frac{73+12 k}{3(19+3k)}\right\}.
\end{gather*}


Consider 
$$
f_t:=(x^2-y^3)^3+x^{5+k}y^2+t(x^2-y^3)x^5y^{k-1}
$$
where $t$ is chosen such that such that $f_t$ is of type $(6,9,10+3k)^-$.
Let $\beta=\frac{3(3\beta_1+2\beta_2)+3k+2}{3(19+3k)}\in B_2$ and in~\eqref{eq:ej2_no_sp}, i.e.
\begin{gather*}
\frac{3(3\beta_1+2\beta_2)+3k+2}{3(19+3k)}=\frac{17+3 k+3j}{3(19+3k)}\Leftrightarrow
3\beta_1+2\beta_2=5+j.
\end{gather*}
Since we need $\beta_1,\beta_2\geq 1$, all the cases are included but $j=1$.
We are going to prove $-\beta$ is a root of $f_t$ if $t\neq 0$, and as a consequence,
for $t\neq 0$ we have $\dim  \tilde{H}_0^{''}/ H_0^{''}\geq 9+k$.

We consider the polynomials $\tilde{f}$, $\tilde{\tilde{f}}$, $\hat{f}$, $\hat{\hat{f}}$, $f_1$, $f_2$:
\begin{gather*}
\tilde{f}(x,y)=(x^6-y^6)^3+x^{15+3k}y^4+t(x^6-y^6)x^{15}y^{2k-2},\\
\tilde{\tilde{f}}(x,y)=(1-y^6)^3+x^{1+3k}y^4+t(1-y^6)x^{1+2k}y^{2k-2},\\
\hat{f}(x,y)=(1-(1-y)^6)^3+x^{1+3k}(1-y)^4+t(1-(1-y)^6)x^{1+2k}(1-y)^{2k-2}=\\
y^3(6^3+\dots)+x^{1+3k}(1-\dots)+t x^{1+2k} y(6+\dots),\\
\hat{\hat{f}}(x,y)=y^{3(1+3k)}(6^3+\dots)+x^{3(1+3k)}(1-\dots)+t x^{3(1+2k)} y^{1+3 k}(6+\dots),\\
f_1(x,y)=y^{3(1+3k)}(6^3+\dots)+(1-\dots)+t x y^{1+3 k}(6+\dots),\\
f_2(x,y)=(6^3+\dots)+x^{3(1+3k)}(1-\dots)+t x^{3(1+2k)} y(6+\dots).
\end{gather*}
%
We have
$$ \res_{s=-\beta} I_{-}(f,\beta_1,\beta_2,0)(s)=\frac{1}{3(1+3k)}
(G_{h^1_{1,-\beta,x}}(1+3k)+G_{h^2_{1,-\beta,y}}(2(-18\beta+3\beta_1+2\beta_2)),
$$
where 
\begin{gather*}
h^1_{1,-\beta,x}(y)=\frac{\partial f_1^{-\beta}}{\partial x}(0,y)=
-6\beta ty^{1+3k}(6^3y^{3(1+3k)}+1)^{-\beta-1},\\
h^2_{1,-\beta,y}(x)=\frac{\partial f_2^{-\beta}}{\partial y}(x,0)=
-6\beta tx^{3(1+2k)}(6^3+x^{(1+3k)3})^{-\beta-1}.
\end{gather*}
%
%
If $t$ is algebraic (and $t\neq 0$), the above residue is transcendental. Hence,
we deduce that these values are roots of the Bernstein polynomial for these values of $t$. Moreover,
since the Bernstein-polynomial stratification is algebraic, we deduce that this is the case for $t\neq 0$.
Note that in this case, for $k=0$, and for random values of $t$, the Tjurina number equals~$40$,
while for $t=0$, the value is~$41$. Hence the Tjurina number is not constant in the $\mu$-constant stratum. 
\end{ejm}

\begin{ejm}
Consider the characteristic sequence $(8,10, 10+q)$, where $(q,2)=1,(q,5)=1$.
In this case $n_1=4$, $m=5$ and $n_2=2$.
We have 
\begin{gather*}
A_1=\left\{\frac{9}{40},\frac{13}{40},\frac{14}{40},\frac{17}{40},
\frac{18}{40},\frac{19}{40},\frac{29}{40},\frac{33}{40},\frac{34}{40},
\frac{37}{40},\frac{38}{40},\frac{39}{40}\right\}\\
A_1^\perp=\left\{\frac{71}{40},\frac{67}{40},\frac{66}{40},\frac{63}{40},
\frac{62}{40},\frac{61}{40},\frac{51}{40},\frac{47}{40},\frac{46}{40},
\frac{43}{40},\frac{42}{40},\frac{41}{40}\right\}\\
A_2=\left\{\left.\frac{40+q+2j}{2(40+q)}\right|0<j\leq20+\left\lfloor\frac{q}{2}\right\rfloor\right\}\\
A_2^\perp=\left\{\left.\frac{3(40+q)-2j}{2(40+q)}\right|0<j\leq20+\left\lfloor\frac{q}{2}\right\rfloor\right\}
\end{gather*}
Note that 
$$
A_2\cup A_2^\perp=
\left\{\frac{42+q}{2(40+q)},\frac{44+q}{2(40+q)},\dots,\frac{118+3q}{2(40+q)}\right\}\\
$$
is a step-$\frac{1}{40+q}$ arithmetic sequence.

Then $\spec(f)\setminus \cA$ is
\begin{gather*}
\left\{\frac{51}{40},\frac{61}{40},\frac{62}{40},\frac{63}{40},\frac{66}{40},\frac{67}{40},\frac{71}{40}\right\}
\cup\left\{\left.\frac{40+q+2j}{2(40+q)}\right|29+
\left\lceil\frac{29}{40}q\right\rceil\leq j\leq20+\left\lfloor\frac{q}{2}\right\rfloor\right\}.
\end{gather*}
With this data
\begin{gather*}
B_1=\left\{\left.\frac{9+4\ell}{40}\right|\underset{\ell\neq 4}{0\leq\ell\leq 8}\right\}
\cup\left\{\left.\frac{10+4\ell}{40}\right|\underset{\ell\neq 5}{1\leq\ell\leq 9}\right\}
\cup\left\{\left.\frac{11+4\ell}{40}\right|\underset{\ell\neq 1,6}{0\leq\ell\leq 9}\right\},
\end{gather*}
where $B_{12}=\{\frac{11}{40}\}$, and
\begin{gather*}
B_2=\left\{\left.\frac{18+q+2\ell}{2(40+q)}\right|0\leq\ell<40+q,\ell\neq 11\right\},\\
B_{22}=\left\{\frac{20+q}{2(40+q)},\frac{22+q}{2(40+q)},
\frac{24+q}{2(40+q)},\frac{30+q}{2(40+q)},\frac{32+q}{2(40+q)}\right\}.
\end{gather*}
%
 To get the Bernstein polynomial for any function with characteristic 
sequence $(8,10, 10+q)$, we only have to check for the $6$ elements
of $B_{12}\cup B_{22}$ if they are roots (recall that the Milnor number is $63+q$).
Let us study the generic $b$-exponents which are not spectral numbers:
\begin{gather*}
\left\{\frac{21}{40},\frac{22}{40},\frac{26}{40},\frac{11}{40},\frac{23}{40},\frac{27}{40},\frac{31}{40}\right\}
\cup
\left\{\left.\frac{18+q+2\ell}{2(40+q)}\right|0\leq\ell\leq 10\right\}
\supset B_{12}\cup B_{22}.
\end{gather*}
We get $12\leq  \dim  \tilde{H}_0^{''}/ H_0^{''}\leq 18$.
We shall show,
for $q=7$, 
that there exists an $f$ with characteristic sequence $(8,10,10+q)$ such that  $12= \dim  \tilde{H}_0^{''}/ H_0^{''}$.
Consider 
$$
f_{\pm}(x,y)=(x^4\pm y^5)^2+x^7y^3+tx^6y^6.
$$
We assume that $f_\pm$ has type  $(8,10,10+q)^{\pm}$.
Let 
\begin{gather*}
\tilde{f}_+(x,y)=f_+(x^4,y^5)=(x^{20}+ y^{20})^2+x^{35}y^{12}+tx^{30}y^{24}\\
f_{+1}(x,y)=(1+y^{20})^2+x^7y^{12}+tx^{14}y^{24}\\
f_{+2}(x,y)=(x^{20}+1)^2+x^{35}y^{7}+tx^{30}y^{14}
\end{gather*}
Let $\beta=\frac{11}{40}=\frac{(5+4)+2}{40}$.
The residue is 
$$
\res _{s=-\beta}I(f_+,1,1)(s)=\frac{1}{80}(G_{h^1_{2,-\beta,x}}(4)+G_{h^2_{2,-\beta,y}}(5)),
$$
where 
$$h^1_{2,-\beta,x}(y)=\frac{\partial^{2}f_1^{-\beta}}{\partial x^{2}}(0,y)=0,\quad
h^2_{2,-\beta,y}(x)=\frac{\partial^{2}f_2^{-\beta}}{\partial y^{2}}(x,0)=0.$$
Then $\res _{s=-\beta}I(f_+,1,1)(s)=0$;
moreover, with the same ideas as in the proof of Claim~\ref{claim1}
we have that $\forall (\beta_1,\beta_2), \res _{s=-\beta}I(f_+,\beta_1,\beta_2)(s)=0$.

Consider now $\beta =\frac{51}{40}= \frac{5\cdot 3+4\cdot 9}{40}$. 
We know that $I(f_+,3,9)(s) $ 
has a pole for $s=-\beta$ with transcendental residue. Combining the two facts, 
by Theorem~\ref{pole-integral-root},  $-\frac{51}{40}$ 
is a zero of the Bernstein polynomial of $f_+$ and $-\frac{11}{40}$ is not.
Since $f_+(x,y)=f_-(-x,-y)$, we deduce the same property for $f_-$.

It remains to study the cases in $B_{22}$, i.e., the set
$\left\{\frac{27}{94},\frac{29}{94},
\frac{31}{94},\frac{37}{94},\frac{39}{94}\right\}$, with $k=2,4,6,12,14$. 
Since the generators of $\Gamma$ are $8,10,47$,
only the following suitable combinations of $\beta_1,\beta_2,\beta_3,\nu$ (see proof of Claim~\ref{claim2}) are
available:
\begin{center}
\begin{tabular}{|c|c|c|c|c|c|c|}\cline{1-7}
$k$ & $\beta_1$ & $\beta_2$ & $\beta_3$ & $\nu$ & $i$ & $\nu_i$\\\cline{1-7}
 2 & 1 & 1 & 0 & 2 &1 & 2\\\cline{1-7}
 4 & 1 & 1 & 0 & 4 &1 & 4\\\cline{1-7}
 6 & 1 & 1 & 0 & 6 &1 & 6\\\cline{1-7}
 12 & 1 & 1 & 0 & 12 & 1 & 12\\\cline{1-7}
 12 & 1 & 1 & 0 & 12 & 2 & 5\\\cline{1-7}
12 & 2 & 1 & 0 & 4 & 1 & 4\\\cline{1-7}
12 & 1 & 2 & 0 & 2 & 1 & 2\\\cline{1-7}
14 & 1 & 1 & 0 & 14 & 1 & 14\\\cline{1-7}
14 & 1 & 1 & 0 & 14 & 2 & 7\\\cline{1-7}
14 & 1 & 1 & 0 & 14 & 3 & 0\\\cline{1-7}
14 & 2 & 1 & 0 & 6 & 1 & 6\\\cline{1-7}
14 & 1 & 2 & 0 & 4 & 1 & 4\\\cline{1-7}
\end{tabular}
\end{center}

Let us compute the polynomials appearing
in the different steps of the process:
\begin{gather*}
f(x,y)=(x^4-y^5)^2+x^7y^3+tx^6y^6,\\
\tilde{f}(x,y)=(x^{20}-y^{20})^2+x^{35}y^{12}+tx^{30}y^{24},\\
\tilde{\tilde{f}}(x,y)=(1-y^{20})^2+x^{7}y^{12}+tx^{14}y^{24},\\
\hat{f}(x,y)=y^2H_{20}(y)^2+x^{7}(1-y)^{12}+tx^{14}(1-y)^{24},\\
\hat{\hat{f}}(x,y)=y^{14} H_{20}(y^7)^2+x^{14}(1-y^7)^{12}+tx^{28}(1-y^7)^{24},\\
f_1(x,y)=y^{14}H_{20}(x^7 y^7)^2+(1-x^7 y^7)^{12}+tx^{14}(1-x^7 y^7)^{24},\\
f_2(x,y)=H_{20}(y^7)+x^{14}(1-y^7)^{12}+tx^{28}y^{14}(1-y)^{24}.
\end{gather*}
where $yH_n(y)=(1-(1-y)^n)$, $H_n(0)=n$.
From these data it is easy to check that
\begin{gather*}
\frac{\partial^{N}f_1}{\partial x^{N}}(0,y)=
\begin{cases}
20^2 y^{14}+1,&\text{ if }N=0\\
-20160 y^{7} (1900 y^{14} + 3),&\text{ if }N=7\\
87178291200(81700 y^{28} + 66 y^{14} + t),&\text{ if }N=14\\
0&\text{ if }\frac{N}{7}\notin\bz.
\end{cases}
\end{gather*}
and
\begin{gather*}
\frac{\partial^{N}f_2}{\partial y^{N}}(x,0)=
\begin{cases}
20^2 +x^{14},&\text{ if }N=0\\
-20160  (1900 + 3 x^{14}),&\text{ if }N=7\\
87178291200(81700  + 66 x^{14} + t x^{28}),&\text{ if }N=14\\
0&\text{ if }\frac{N}{7}\notin\bz.
\end{cases}
\end{gather*}
With the same ideas
\begin{gather}\label{eq:derf1}
\frac{\partial^{N}f_1^{-\beta}}{\partial x^{N}}(0,y)=
\begin{cases}
(20^2 y^{14}+1)^{-\beta},&\text{ if }N=0\\
20160\beta y^{7} (1900 y^{14} + 3)(20^2 y^{14}+1)^{-\beta-1},&\text{ if }N=7\\
\!-\!87178291200\!\beta(81700 y^{28}\!\! +\! 66 y^{14}\!\! +\! \!t)(20^2 y^{14}\!\!+\!\!1)^{-\beta-1}+&\\
697426329600\beta(\beta\!\!+\!\!1)\!y^{14} (1900 y^{14}\!\! +\!\! 3)^2(20^2 y^{14}\!\!+\!\!1)^{-\beta-2}&\text{ if }N=14\\
0&\text{ if }\frac{N}{7}\notin\bz.
\end{cases}
\end{gather}
and
\begin{gather}\label{eq:derf2}
\frac{\partial^{N}f_2^{-\beta}}{\partial y^{N}}(x,0)=
\begin{cases}
(20^2 +x^{14})^{-\beta},&\text{ if }N=0\\
20160\beta (1900  + 3x^{14})(20^2 +x^{14})^{-\beta-1},&\text{ if }N=7\\
\!-\!87178291200\!\beta\!(81700\!\! +\! 66 x^{14}\! + \!t x^{28})(20^2 \!\!+\!\!x^{14})^{-\beta-1}+&\\
697426329600\beta(\beta\!+\!1) (1900 \! +\! 3 x^{14})^2(20^2 \!+\!x^{14})^{-\beta-2}&\text{ if }N=14\\
0&\text{ if }\frac{N}{7}\notin\bz.
\end{cases}
\end{gather}
It becomes obvious that all the residues vanish for $k=2,4,6,12$.
The residue also vanishes for $k=14$ and $(\beta_1,\beta_2)=(2,1),(1,2)$.
Let us study the case $k=14$, i.e., $\beta=\frac{39}{94}$, with $(\beta_1,\beta_2)=(1,1)$.
Note that for $\beta_2=1$,
\begin{gather*}
h_4(y)=
(1-y)^3=1-3 y+3 y^2-y^3.
\end{gather*}
Hence,
\begin{gather*}
14\res _{s=-\beta}I_-(f,1,1,0)(s)=
\frac{1}{14!}\left(G_{h^1_{14,-\beta,x}}(7)+G_{h^2_{14,-\beta,y}}\left(-\frac{714}{47}\right)\right)\\-
\frac{3}{7!}\left(G_{h^1_{7,-\beta,x}}(14)+G_{h^2_{7,-\beta,y}}\left(-\frac{714}{47}\right)\right)+
3\left(G_{h^1_{0,-\beta,x}}(21)+G_{h^2_{0,-\beta,y}}\left(-\frac{714}{47}\right)\right)
\end{gather*}
We find the values of $h_1(k,-\beta,x)$ and $h_2(k,-\beta,y)$ in~\eqref{eq:derf1} and~\eqref{eq:derf2}.
We can prove that the pole at $s=-\beta$ of $I_-(f,1,1)(s)$ is a polynomial of degree $1$ in $t$
and hence there is a value of~$t$ for which the residue vanishes. Moreover
\begin{gather*}
\res _{s=-\beta}I_-(f,1,1,0)(s)=
\frac{\left(136 \, t - 63\right)}{447440} {\boldsymbol{B}}\left(-\frac{4}{47}, \frac{1}{2}\right).
\end{gather*}
In particular, none of the above elements are roots of the Bernstein polynomial of $f_-$ for $t=\frac{63}{136}$.
For $t\neq\frac{63}{136}$, $-\frac{39}{94}$ is such a root but not for $t=\frac{63}{136}$. This can be 
confirmed using \texttt{checkRoot} of~\cite{LMM:12} in \texttt{Singular}~\cite{DGPS}, inside~\cite{sage}.
Moreover, it can be proved that for general~$t$ (including $\frac{63}{136}$) the Tjurina number equals
the expected value for Hertling-Stahlke bound, i.e., $58$; using~\cite{LP}
the values of Tjurina number are constant in these $\mu$-constant strata, namely
they equal $51+q$. In particular, Bernstein and Tjurina stratifications
do not coincide. 

\end{ejm}

\appendix
\section{Technical proofs}\label{sec:appendix}
\begin{proof}[Proof of {\rm Claim~\ref{claim1}}]
Let us recall that $\beta=\frac{m+n_1+k}{m n_1 n_2}$ and
$k\notin\Gamma_1$.

Let $(\beta_1,\beta_2)\in\mathbb{Z}^2_{\geq 1}$. If $\beta_1 m+\beta_2 n_1>m+n_1+k$,
the greatest pole  of
$I(f,\beta_{1},\beta_{2})(s)$
is smaller than $-\beta$ and the statement holds trivially for any~$f$.

We want to fix our attention on the couples 
$(\beta_1,\beta_2)\in\mathbb{Z}^2_{\geq 1}$ such that $\beta_1 m+\beta_2 n_1\leq m+n_1+k$.
There is a finite number of such couples which will be characterized in the following paragraphs.

Since
$k\notin\Gamma_1$, and from its properties, 
we know that $k\leq m n_1-m-n_1$. We write
\begin{equation}
k=m i_0+ n_1 j_0- m n_1,\quad
1\leq i_0< n_1,\quad 1\leq j_0 <  m.
\end{equation}
Moreover the pair 
of positive integers $(i_0,j_0)$ is unique. Let us assume the existence of another solution
$(i_1,j_1)$, such that $i_1>i_0$; then $i_1=i_0+n_1 v$, $v\in\mathbb{Z}_{>0}$, i.e., $i_1>n_1$, 
leading to a contradiction.

We are going to prove also that $\beta_1\leq i_0$ and $\beta_2\leq j_0$. Let us assume that
$\beta_1>i_0$. Then
\begin{gather*}
(i_0+1) m+\beta_2 n_1<\beta_1 m+\beta_2 n_1\leq m+n_1+k=m (i_0+1)+ n_1 (j_0+1)- m n_1\\
\Longrightarrow
\beta_2+m\leq j_0+1<m+1,
\end{gather*}
which is a contradiction. We are going to enumerate these couples $(\beta_1,\beta_2)$.

Let us define $\ell_{i j}:= m i +n_1 j -n_1m $ and consider
$$
\{\ell_{ij}\mid \ell_{ij}\geq 1,1\leq i\leq i_0,1\leq j\leq j_0\}=\{\ell_1,\dots,\ell_r\},\quad
\ell_1<\dots<\ell_r=k.
$$ 
Let
\begin{equation}\label{eq:lp}
\ell_p= m i_p +n_1 j_p-m n_1,\quad i_p,j_p \text{ well-defined},
\text{ for }1\leq p\leq  r.
\end{equation}
For each $p$ we can write
$$
\beta= \frac{m\overbrace{(1+i_0-i_p)}^{\beta_{1p}}+ n_1 \overbrace{(1+j_0-j_p )}^{\beta_{2p}}
+ \ell_p}{n_1n_2 m};
$$
note that $\beta_{1r}=\beta_{2r}=1$ and $1\leq \beta_{1p}\leq i_0$, $1\leq \beta_{2p}\leq j_0$.
It is easy to prove that 
$\{(\beta_{1p},\beta_{2p})\mid 1\leq p\leq r\}=
\{(\beta_1,\beta_2)\in\mathbb{Z}_{\geq 1}\times\mathbb{Z}_{\geq 1}\mid\beta_1 m+\beta_2 n_1\leq m+n_1+k\}$.
These $r$~pairs are exactly the ones for which we need to prove the statement.

Define 
$$
f_\mathbf{t}(x,y): = \left( x^{n_1} + y ^{m} + \sum _{p=1}^r t_p x^{i_p} y^{j_p} \right)^{n_2} +x^a y^b.
$$
with $ma+n_1b=q+mn_1n_2$  and $\mathbf{t}=(t_1,\ldots,t_r)\in \br^r$ such that $f_\mathbf{t}$ is of type $(n_1n_2,mn_2,mn_2+q)^+$.
 By Proposition~\ref{prop:newton1} one has 

$$\tilde{f}_\mathbf{t}(x,y)=(x^{mn_1}+y^{mn_1}+\sum _{p=1}^r t_p x^{mi_p} y^{n_1j_p} )^{n_2} +x^{ma} y^{n_1b}.$$
$$f_{\mathbf{t},1}(x,y)=(1+y^{mn_1}+\sum _{p=1}^r t_p x^{\ell_p} y^{n_1j_p} )^{n_2} +x^{q} y^{n_1b}.$$
$$f_{\mathbf{t},2}(x,y)=(x^{mn_1}+1+\sum _{p=1}^r t_p x^{mi_p} y^{\ell_p} )^{n_2} +x^{ma} y^{q}.$$

Let us fix~$p\in\{1,\dots,r\}$. To compute the residue of $I_+(f_\mathbf{t},\beta_{1p},\beta_{2p})(s)$ at 
$s=-\beta$ we apply equation~(\ref{residue-B1}) and we get
$$ 
\rho_p:=\res_{s=-\beta} I_+(f_\mathbf{t},\beta_{1p},\beta_{2p})(s)=
\frac{1}{\ell_p!mn_1n_2}\left(G_{h^1_{\ell_p,-\beta,x}}(n_1\beta_{2p}
)+G_{h^2_{\ell_p,-\beta,y}}
(m\beta_{1p})\right)
$$
where 
$$
h^1_{\ell_p,-\beta,x}(y)=\frac{\partial^{\ell_p}f_{\mathbf{t},1}^{-\beta}}{\partial x^{\ell_p}}(0,y) \textrm{ and }\,  
h^2_{\ell_p,-\beta,y}(x)=\frac{\partial^{\ell_p}f_{\mathbf{t},2}^{-\beta}}{\partial y^{\ell_p}}(x,0),
$$
recall that $G_f(s)$ is meromorphic continuation of 
$
\int_0^1 f(t)t^s\frac{dt}{t}.
$

We have
$$
\frac{\partial ^{\ell_p}f_{\mathbf{t},1}^{-\beta}}{\partial x^{\ell_p}}(0,y)=
\sum_{V=(u_w)_{w=1}^{|V|}\in\mathcal{P}(\ell_p)
} D_V
\left(\prod_{w=1}^{|V|}
\frac{\partial ^{u_w}f_{\mathbf{t},1}}{\partial x^{u_w}}(0,y)\right)
(1+y^{mn_1})^{-n_2(\beta+\vert V\vert)}
$$
where 
\begin{equation}\label{eq:plp}
\mathcal{P}(\ell_p)=\left\{V=(u_w)_{w=1}^{|V|}\Bigg| \sum_{w=1}^{|V|} u_w=\ell_p, u_1\leq\dots\leq u_{|V|}\right\},
\end{equation}
and $D_V\in \mathbb{Q}$. In the same way,
$$
\frac{\partial ^{\ell_p}f_{\mathbf{t},2}^{-\beta}}{\partial y^{\ell_p}}(x,0)=
\sum_{V=(u_w)_{w=1}^{|V|}\in\mathcal{P}(\ell_p)
} D_V
\left(\prod_{w=1}^{|V|}
\frac{\partial ^{u_w}f_{\mathbf{t},2}}{\partial y^{u_w}}(x,0)\right)
(1+x^{mn_1})^{-n_2(\beta+\vert V\vert)}.
$$

Let us study now the  $u^{\text{th}}$ $x$-derivative of $f_{\mathbf{t},1}$ evaluated at $(0,y)$, i.e.,
we need to look for the monomials of the type $x^u y^j$, for any~$j$.
Hence,
$$
\frac{\partial ^{u}f_{\mathbf{t},1}}{\partial x^{u}}(0,y)=\delta_q^u q!y^{n_1b}+
\sum_{K=(k_h)_{h=1}^r,u=\sum k_h \ell_h} 
C_K\left(\prod_{h=1}^r t_h^{k_h}y^{n_1 k_h j_h}\right)(1+y^{mn_1})^{n_2-\sum_{h=1}^r k_h},
$$
for some $C_K\in\mathbb{Q}$, where $\delta_q^u$ is the Kronecker symbol.
A similar formula holds for derivatives with respect to~$y$:
$$
\frac{\partial ^{u}f_{\mathbf{t},2}}{\partial y^{u}}(x,0)=\delta_q^u q!x^{m a}+
\sum_{K=(k_h)_{h=1}^r,u=\sum k_h \ell_h} 
C_K\left(\prod_{h=1}^r t_h^{k_h} x^{m k_h i_h}\right)(1+x^{mn_1})^{n_2-\sum_{h=1}^r k_h},
$$
Let us  compute the residue $\rho_p$. It is a linear combination with coefficients in $\mathbb{Q}$ of
terms depending on couples $(V,(K_w)_w)$ where $V=(u_w)\in\mathcal{P}(\ell_p)$ and
for each $w\in\{1,\dots,|V|\}$, $K_w=(k_{h,w})_{h=1}^r$ satisfies either
%
\begin{equation}\label{eq:uw}
\sum_{h=1}^r k_{h,w}\ell_h=u_w;
\end{equation}
or the term involved is $y^{n_1b}$ or $x^{m a}$, i.e.,
\begin{equation}\label{eq:uwq}
u_w=q,\quad k_{h,w}=0;
\end{equation}
let $r_V$ be the number of terms of this type for $V$
then, the term is obtained as
\begin{gather*}
\!\int _0^1\!\!\prod_{w=1}^{|V|}\!\!
\left(
\prod_{h=1}^r\! t_h^{k_{h,w}}\!y^{n_1 k_{h,w} j_h}\!\!
\right)\!\!
y^{r_V n_1 b}\!
(1\!+y^{mn_1})^{\sum_{w}(n_2-\sum_{h=1}^r k_{h,w})-n_2(\beta+\vert V\vert+r_V)}y^{n_1(1+j_0-j_p)}\frac{dy}{y}+\!\\
\!\int_0^1\!\!\prod_{w=1}^{|V|}\!\!
\left(
\prod_{h=1}^r\! t_h^{k_{h,w}}\!x^{m k_{h,w} i_h}\!\!
\right)\!
x^{r_V m a}
(1+x^{mn_1})^{\sum_{w}(n_2-\sum_{h=1}^r k_{h,w})-n_2(\beta+\vert V\vert+r_V)}x^{m(1+i_0-i_p)}\frac{dx}{x}.
\end{gather*}
This is a monomial in $t_1,\dots,t_r$, namely,
$$
\prod_{h=1}^r
t_h^{\sum_w k_{h,w}}
$$
whose coefficient is
\begin{gather*}
\rho_{V,(K_w)}:=\int _0^1
y^{n_1(\sum_{w,h} k_{h,w} j_h+1+j_0-j_p+r_V b)}
(1+y^{mn_1})^{-n_2(\beta+r_V)-\sum_{w,h} k_{h,w}}\frac{dy}{y}+\\
\int _0^1
x^{m(\sum_{w,h} k_{h,w} i_h+1+i_0-i_p+r_V a)}
(1+x^{mn_1})^{-n_2(\beta+r_V)-\sum_{w,h} k_{h,w}}\frac{dx}{x}=\\
G_{(1+y^{m n_1})^\alpha}\left(n_1\left(\sum_{w,h} k_{h,w} j_h+1+j_0-j_p+r_V b\right)\right)+\\
G_{(1+x^{m n_1})^\alpha}\left(m\left(\sum_{w,h} k_{h,w} i_h+1+i_0-i_p+r_V a\right)\right)
\end{gather*}
where
$$
\alpha:=-n_2(\beta+r_V)-\sum_{w,h} k_{h,w}.
$$
We need to compute the sum of the arguments
\begin{gather}\label{eq:suma}
\sigma:=n_1\left(\sum_{w,h} k_{h,w} j_h+1+j_0-j_p+r_V b\right)+
m\left(\sum_{w,h} k_{h,w} i_h+1+i_0-i_p+r_V a\right).
\end{gather}
From the equalities \eqref{eq:uw}, \eqref{eq:uwq} and the definition \eqref{eq:lp}, we have
\begin{gather*}
u_w=m\sum_{h=1}^r k_{h,w}i_h+n_1\sum_{h=1}^r k_{h,w}j_h-mn_1\sum_{h=1}^r k_{h,w}
\end{gather*}
if the first term is not involved and $u_w=q$ if it is; recall also that
\begin{equation}\label{eq:q}
q=ma+n_1b-mn_1n_2.
\end{equation}
Then
\begin{gather}\label{eq:mn1}
i_p m\!+\!n_1 j_p\!-\!m n_1=\ell_p=\sum_w u_w=\\ \notag
m\!\!\left(\sum_{w,h} k_{h,w} i_h+r_V a\right)\!
+\!n_1\!\!\left(\sum_{w,h} k_{h,w}j_h+r_V b\right)\!-\!mn_1\!\!\left(\sum_{w,h}  k_{h,w}+r_V n_2\right)\!.
\end{gather}
We obtain several properties from this equality. 
In particular
\begin{gather*}
\sigma=
m n_1\left(\sum_{w,h} k_{h,w}+r_V n_2\right)+
\overbrace{n_1\left(1+j_0\right)+m\left(1+i_0\right)-m n_1}^{m +n_1+k=m n_1 n_2 \beta}\\=
m n_1\left(n_2(\beta+r_V)+\sum_{w,h} k_{h,w}\right)=-m n_1\alpha.
\end{gather*}
By Lemma~\ref{beta}, we have that
$$
\!\rho_{V,(K_w)}\!=\!\!\frac{1}{m n_1}\!\boldsymbol{B}\!
\left(
\frac{\sum_{w,h} k_{h,w} j_h+1+j_0-j_p+r_V b}{m}\!,\!
\frac{\sum_{w,h} k_{h,w} i_h+1+i_0-i_p+r_V a}{n_1}\!
\right)\!.\!
$$
As another consequence from \eqref{eq:mn1}, 
we have that
\begin{gather*}
\frac {\sum_{w,h} k_{h,w}j_h-j_p+r_V b}{m},\frac {\sum_{w,h} k_{h,w}i_h-i_p+r_V a}{n_1}\in \mathbb{Z}. 
\end{gather*}
Let us prove it. Since
$\gcd(m,n_1)=1$, it is enough to show that the product of $n_1$ and the first denominator is congruent to $0\bmod m$:
\begin{gather*}
n_1\!\left(\sum_{w,h}\! k_{h,w}j_h\!+\!r_V b\!-\!j_p\!\right)\!\!=\!
m\!\!\left(\!
i_p\!-\!n_1\!-\!\sum_{w,h} k_{h,w}i_h \!-\!r_Va\!
\right)\!
+\!m n_1\left(\sum_{w,h}  k_{h,w}+r_V n_2\right).
\end{gather*}

From the properties of the beta function, $\rho_{V,(K_w)}$ is a product of a non-zero rational number and 
$\boldsymbol{B}\left(\frac{1+i_0}{n_1},\frac{1+j_0}{m}\right)$.
As a consequence
$\res_{s=-\beta} 
I_+(f_\mathbf{t},\beta_{1i},\beta_{2i})(s)$
is, up to the factor $\boldsymbol{B}\left(\frac{1+i_0}{n_1},\frac{1+j_0}{m}\right)$ a 
polynomial $Q_p$ in the $t_i$'s with coefficients in~$\mathbb{Q}$; the coefficient of $t_p$ 
does not vanish. 
The only option to have the monomial $t_p$ is when $V=(\ell_p)$ and $K=(\ell_p)$, $r_V=0$ and 
for these values
$$
\rho_{V,(K_w)}=\frac{1}{m n_1}\boldsymbol{B}\left(\frac{1+i_0}{n_1},\frac{1+j_0}{m}\right).
$$


Since $\ell_1$ is the minimum, $Q_1$ is a polynomial in $t_1$ of degree~$1$.
Then we can choose $t_1$ such that  $\res_{s=-\beta} I_+(f_\mathbf{t},\beta_{11},\beta_{21})(s)=0$, 
since this residue is independent of $t_p$, for $p>1$. 
From now on $f_{\mathbf{t}}$ is a polynomial in $t_2,\dots,t_r$, with $t_1$ fixed as above.
In the same way,
we choose $t_2$ such that
$$
\res_{s=-\beta} I_+(f_\mathbf{t},\beta_{12},\beta_{22})(s)=0,
$$
and recursively we can find $t_3,\dots,t_r$ such that
 $\res_{s=-\beta} I_+(f_\mathbf{t},\beta_{1p},\beta_{2p})(s)=0$, 
for all $1\leq p\leq r$ and all the $t$'s are in $\mathbb{Q}$.
Using Proposition~\ref{123to4}  it is easy to proof that  $f_\mathbf{t}$ is of type $(n_1n_2, mn_2,mn_2+q )^+$.
%
\end{proof}

\begin{proof}[Proof of {\rm Claim~\ref{claim2}}]
Let $(\beta'_1,\beta'_2,\beta_3)$ be as in the statement. If $n_2(\beta'_1 m+\beta'_2 n_1)+\beta_3(n_1 n_2 m+q)>n_2(m+n_1)+k$,
it is not hard to check that the statement holds trivially for any~$f$
of type $(n_1n_2,mn_2,mn_2+q)^-$.

We are going to characterize  the triples not satisfying the above inequality
and to find an $f_\beta$ satisfying the conditions of the statement.
%
Let
$$
M_\beta=
\left\{(\tilde{\beta}_1,\tilde{\beta}_2,\beta_3,\nu)\in\bz_{\geq 0}^3\times\bz_{\geq 1}\mid
k=n_2(m\tilde{\beta}_1+n_1\tilde{\beta}_2)+(mn_1n_2+q)\beta_3+\nu\right\}.
$$
It is not hard to prove the following properties:
\begin{itemize}
\item if $(\tilde{\beta}_1,\tilde{\beta}_2,\beta_3,\nu)\in M_\beta$,
then $\beta_3<n_2$;
\item if moreover $(\tilde{\beta}'_1,\tilde{\beta}'_2,\beta'_3,\nu)\in M_\beta$
then $\beta_3=\beta'_3$.
\end{itemize}

We denote by $N_\beta$ the set of $\nu$ which are the fourth coordinate of some
element of $M_\beta$ and we order $N_\beta$.
For $\nu\in N_\beta$, choose 
$\tilde{\beta}_1,\tilde{\beta}_2,\beta_3$ such that 
$(\tilde{\beta}_1,\tilde{\beta}_2,\beta_3,\nu)\in M_\beta$; if we denote
$\beta_i=\tilde{\beta}_i+1$, $i=1,2$, we have:
$$
n_2(m+n_1)+k=n_2(m\beta_1+n_1 \beta_2)+(mn_1n_2+q)\beta_3+\nu.
$$
Note that $\beta_3$ is determined by~$\nu$; it may not be the case for $\beta_1,\beta_2$. 
Let $\ell_{\nu}$ such that $0\leq \ell_{\nu}<n_2$,  and $a_{\nu},b_{\nu}\in \bz_{\geq 0}$ 
such that 
$$
(mn_1n_2+q)\ell_{\nu}+(ma_{\nu}+n_1b_{\nu})n_2=(mn_1n_2+q)n_2+\nu.
$$
Let 
$$
f_{\beta}(x,y)=(x^{n_1}-y^{m})^{n_2}+x^ay^b+\sum_{\nu\in N_{\beta}} t_{\nu} (x^{n_1}-y^{m})^{\ell_{\nu}}x^{a_{\nu}}y^{b_{\nu}}
$$
We choose $f_{\beta}$ of type $(n_1n_2,mn_2,mn_2+q)^-$. Let us recall
the change of variables that allows to compute the poles of the proper integrals.
Note that in this case, one can choose $g_Y=x^{n_1}-y^{m}$.
We have:
\begin{gather*}
\tilde{f}_{\beta}(x,y)=(x^{mn_1}-y^{mn_1})^{n_2}+x^{ma}y^{n_1b}+
\sum_{\nu\in N_{\beta}} t_{\nu} (x^{mn_1}-y^{mn_1})^{\ell_{\nu}}x^{ma_{\nu}}y^{n_1b_{\nu}}\\
\tilde{\tilde{f}}_{\beta}(x,y)=(1-y^{mn_1})^{n_2}+x^{q}y^{n_1b}+
\sum_{\nu\in N_{\beta}} t_{\nu} (1-y^{mn_1})^{\ell_{\nu}}x^{\frac{\nu+q(n_2-\ell_{\nu})}{n_2}}y^{n_1b_{\nu}}
\end{gather*}
\begin{gather*}
\hat{f}_{\beta}(x,y)=y^{n_2}h_1(y)+
x^qh_2(y)+\sum_{\nu\in N_{\beta}} t_{\nu}y^{\ell_{\nu}} h_{3,\nu}(y) x^{\frac{\nu+q(n_2-\ell_{\nu})}{n_2}}\\
\hat{\hat{f}}_{\beta}(x,y)=y^{n_2q} h_1(y^q)+
x^{n_2q} h_2(y^q)+\sum_{\nu\in N_{\beta}} t_{\nu}y^{\ell_{\nu}q} h_{3,\nu}(y^q) x^{\nu+q(n_2-\ell_{\nu})}\\
f_{1\beta}(x,y)=y^{n_2q} h_1(x^q y^q)+
h_2(x^q y^q)+\sum_{\nu\in N_{\beta}} t_{\nu}y^{\ell_{\nu}q} h_{3,\nu}(x^q y^q) x^{\nu}\\
f_{2\beta}(x,y)=h_1(y^q)+
x^{n_2q} h_2(y^q)+\sum_{\nu\in N_{\beta}} t_{\nu}y^{\nu} h_{3,\nu}(y^q)x^{\nu+q(n_2-\ell_{\nu})}
\end{gather*}
where $h_1(0)=(mn_1)^{n_2}$, $h_2(0)=1$ and $h_{3,\nu}(0)=(mn_1)^{\ell_{\nu}}$,
$\deg h_3(y)=(m n_1-1)\ell_\nu+n_1 b_\nu$.
For further use, $c_{ij}$ is the coefficient of $y^j$ in $h_i$, $i=1,2$ and $c_{3,\nu,j}$ for~$h_{3,\nu}$.

Let 
$$\tilde{g}(x,y)=x^{mn_1}-y^{mn_1},\tilde{\tilde{g}}(x,y)=1-y^{mn_1}$$
and define $h_4(y)$ by the property
$$
y^{\beta_3}h_4(y)=
(1-(1-y)^{mn_1})^{\beta_3}(1-y)^{n_1\beta_2-1},
$$
where $h_4(0)=(m n_1)^{\beta_3}$,
and write
$$
h_4(y)=\sum_{i=1}^{(m n_1-1) \beta_3+n_1\beta_2} b_{i}y^{i-1}.
$$
We want to compute $\res_{s=-\beta} I_{-}(f_{\beta},\beta_1,\beta_2,\beta_3)(s)$.
For $1\leq i\leq (m n_1-1) \beta_3+n_1\beta_2$, set $\nu_i$ such that
$$
\beta=\frac{n_2(m \beta_1+n_1 \beta_2+m n_1\beta_3)+q(\beta_3+i)+\nu_i}{n_2(mn_1n_2+q)};
$$
we dismiss the cases where $\nu_i<0$; note that $\nu=i q+\nu_i$.
The formula for the residue, see~\eqref{residue-B2}, is:
\begin{gather*}
\res_{s=-\beta} I_{-}(f_{\beta},\beta_1,\beta_2,\beta_3)(s)=\\
\frac{1}{n_2q}\sum_{i} \frac{1}{\nu_i!} b_{i} 
(G_{h^1_{\nu_i,-\beta,x}}(q(\beta_3+i))+G_{h^2_{\nu_i,-\beta,y}}(n_2(m\beta_1+n_1\beta_2+mn_1\beta_3-mn_1n_2\beta))
\end{gather*}
where 
$$
h^1_{\nu_i,-\beta,x}(y)=\frac{\partial^{\nu_i}f_{1\beta}^{-\beta}}{\partial x^{\nu_i}}(0,y),
\qquad h^2_{\nu_i,-\beta,y}(x)=\frac{\partial^{\nu_i}f_{2\beta}^{-\beta}}{\partial y^{\nu_i}}(x,0).
$$
We proceed as in the proof of Claim~\ref{claim1}:
\begin{gather*}
\frac{\partial^{\nu_i}f_{1\beta}^{-\beta}}{\partial x^{\nu_i}}(0,y)=
\sum_{V=(\nu_w)\in\mathcal{P}(\nu_i)}D_V
\left(\prod_{w=1}^{|V|}\frac{\partial^{\nu_w}f_{1\beta}}{\partial x^{\nu_w}}(0,y)\right)
((mn_1)^{n_2}y^{n_2q}+1)^{-\beta-\vert V\vert}\\
\frac{\partial^{\nu_i}f_{2\beta}^{-\beta}}{\partial y^{\nu_i}}(x,0)=
\sum_{V=(\nu_w)\in\mathcal{P}(\nu_i)}D_V
\left(\prod_{w=1}^{|V|}\frac{\partial^{\nu_w}f_{2\beta}}{\partial y^{\nu_w}}(x,0)\right)
((mn_1)^{n_2}+x^{n_2q})^{-\beta-\vert V\vert}
\end{gather*}
with $D_V\in \mathbb{Q}$. The derivatives without powers are computed as follows. 
For $u\in\mathbb{Z}_{\geq 0}$, let $q_u:=\left\lfloor\frac{u}{q}\right\rfloor$ and set
$$
\mathcal{C}(u):=\{h\in\{0,1,\dots,q_u\}\mid u-q h=\nu_h\in N_\beta\}
$$
\begin{gather*}
\frac{1}{u!}\frac{\partial^{u}f_{1\beta}}{\partial x^{u}}(0,y)=
\sum_{h\in \mathcal{C}(u)} c_{3,\nu_h,h}
t_{\nu_h}y^{(\ell_{\nu_h}+h)q}+\chi_\bz(K_u)(c_{K_{u}1}y^{n_2q}+c_{K_{u}2})y^{u}\\
\frac{1}{u!}\frac{\partial^{u}f_{2\beta}}{\partial y^{u}}(x,0)=\sum_{h\in \mathcal{C}(u)} c_{3,\nu_h,h}
t_{\nu_h}x^{\nu_h+q(n_2-\ell_{\nu_h})}+\chi_\bz(K_u)(c_{K_{u}1}+c_{K_{u}2}x^{n_2q})
\end{gather*}
where $K_u=\frac{u}{q}$ and $\chi_\bz$ is
the characteristic function of $\bz$.

The terms of the derivatives involved in the computation of the residues are 
parametrized by $V=(\nu_w)_{w=1}^{|V|}\in\mathcal{P}(\nu_i)$; given $V$ we decompose
its set of indices in three parts:
\begin{itemize}
\item $w\in W_1$, which determines $h_w\in\mathcal{C}(\nu_w)$, corresponding to a term with coefficient $c_{3,\nu_{h_w},h_w}$;
\item $w\in W_2$, where $\nu_w\equiv 0\bmod q$, corresponding to a term with coefficient $c_{K_{\nu_w}1}$;
\item $w\in W_3$, where $\nu_w\equiv 0\bmod q$, corresponding to a term with coefficient $c_{K_{\nu_w}2}$,
\end{itemize}
where $W=W_1\coprod W_2\coprod W_3$.
For such a 4-uple $(V,W_1,W_2,W_3)$ the integrands are
\begin{gather*}
\left(\prod_{w\in W_1}\!\!t_{\nu_{h_w}}y^{(\ell_{\nu_{h_w}}+h_w)q}\!\right)\!\!
\left(\prod_{w\in W_2}y^{\nu_w+n_2 q}\right)\!\!\left(\prod_{w\in W_3}y^{\nu_w}\right)\!
(\!(mn_1)^{n_2}y^{n_2q}\!+\!1)^{-\beta-\vert V\vert}
y^{q(\beta_3+i)}=\\
\left(\prod_{w\in W_1}t_{\nu_{h_w}}\right) y^{q(\sum_{w\in W_1} (\ell_{\nu_{h_w}}+h_w)+
\sum_{w\in W_2\cup W_3} K_{\nu_w}+n_2|W_2|+\beta_3+i)}
((mn_1)^{n_2}y^{n_2q}+1)^{-\beta-\vert V\vert}
\end{gather*}
and
\begin{gather*}
\!\left(\prod_{w\in W_1}\!\!\!t_{\nu_{h_w}}x^{\nu_{h_w}+q(n_2-\ell_{\nu_{h_w}})}\!\!\right)\!\!\!
\left(\!\prod_{w\in W_3}\!\!\!\!x^{ n_2q}\!\!\right)\!\!(\!(mn_1)^{n_2}\!\!+\!x^{n_2q})^{-\!\beta-\vert V\vert}
x^{n_2(m\beta_1\!+n_1\beta_2+mn_1\beta_3-\!mn_1n_2\beta)}\!\!=\!\\
\!\left(\prod_{w\in W_1}\!\!\!\!t_{\nu_{h_w}}\!\!\!\right)\! x^{\sum_{w\in W_1}\! (\nu_{h_w}\!+q(n_2\!-\ell_{\nu_{h_w}}\!))\!
+n_2(q|W_3|+m\beta_1\!+n_1\beta_2+mn_1\beta_3-mn_1n_2\beta)}
(\!(mn_1)^{n_2}\!\!+\!\!x^{n_2q})^{-\!\beta-\!\vert V\vert}
\end{gather*}
If $\alpha=-\beta-\vert V\vert$, we need to compute
\begin{gather*}
G_{((mn_1)^{n_2}y^{n_2q}+1)^\alpha}\left(
q\left(\sum_{w\in W_1} (\ell_{\nu_{h_w}}+h_w)+
\sum_{w\in W_2\cup W_3} K_{\nu_w}+n_2|W_2|+\beta_3+i)
\right)
\right)+\\
\!G_{((mn_1)^{n_2}\!+\!x^{n_2q})^\alpha}\!\!\left(
\sum_{w\in W_1}\!\! (\nu_{h_w}\!\!+\!q(n_2\!-\!\ell_{\nu_{h_w}}))\!
+\!n_2(q|W_3|\!\!+\!m\beta_1\!+\!n_1\beta_2\!+\!mn_1\beta_3\!-\!mn_1n_2\beta)\!\!
\right)\!
\end{gather*}
The sum of the two entries equals $-n q_2\alpha$, and by Lemma~\ref{beta}, we have
that this contribution equals
\begin{gather*}
\frac{(m n_1)^{\sum_{w\in W_1} (\ell_{\nu_{h_w}}+h_w)+
\sum_{w\in W_2\cup W_3} K_{\nu_w}+n_2|W_2|+\beta_3+i
}
}{n_2 q}\boldsymbol{B}\left(u,
v\right)
\end{gather*}
where
\begin{gather*}
u:=\frac{1}{n_2}
\left(\sum_{w\in W_1} (\ell_{\nu_{h_w}}+h_w)+
\sum_{w\in W_2\cup W_3} K_{\nu_w}+n_2|W_2|+\beta_3+i
\right)\\
v:=\frac{1}{n_2 q}\sum_{w\in W_1} (\nu_{h_w}+q(n_2-\ell_{\nu_{h_w}}))
+\frac{1}{q}(q|W_3|+m\beta_1+n_1\beta_2+mn_1\beta_3-mn_1n_2\beta)
\end{gather*}
Note that $u+v=\beta+|V|$, i.e., it is congruent with $\beta\bmod\mathbb{Z}$.

On the other side, since $q\ell_\nu\equiv\nu\bmod n_2$ the following congruences $\bmod n_2$ hold:
\begin{gather*}
\!q(\!n_2 u\!-\!(\ell_\nu\!+\!\beta_3)\!)\!\equiv\!\!\!\!
\sum_{w\in W_1} q(\ell_{\nu_{h_w}}+ h_w)+
\sum_{w\in W_2\cup W_3} \nu_w+qi-q\ell_\nu\equiv\\
\sum_{w\in W_1} (\nu_{h_w}+ qh_w)+
\sum_{w\in W_2\cup W_3} \nu_w+qi-\nu\equiv
\sum_{w=1}^{|V|} \nu_w+qi-\nu=\nu_i+q i-\nu=0;
\end{gather*}
since $\gcd(q,n_2)=1$, we deduce that
$$
u-\frac{\ell_\nu+\beta_3}{n_2}\in\bz.
$$
In particular, the corresponding contribution is, up to a factor in~$\mathbb{Q}$, the value 
$\boldsymbol{B}\left(\frac{\ell_{\nu}+\beta_3}{n_2}, \beta -\frac{\ell_{\nu}+\beta_3}{n_2}\right)$.
Hence, the total result is the product of this value and a polynomial in $\mathbf{t}$ with coefficients
in~$\mathbb{Q}$ and the coefficients of degree~$1$ do not vanish.

Let us denote $N_\beta=\{(\nu^{(1)},\dots,\nu^{(r)}\}$ where $\nu^{(1)}<\dots<\nu^{(r)}$.
For $\nu^{(1)}$, let us consider the corresponding $\beta_1^{(1)},\beta_2^{(1)},\beta_3^{(1)}$.
Then,
$$
\res_{s=-\beta} I_{-}(f_{\beta},\beta_1^{(1)},\beta_2^{(1)},\beta_3^{(1)})(s)=
\boldsymbol{B}\left(\frac{\ell_{\nu^{(1)}}+\beta_3}{n_2}, \beta -\frac{\ell_{\nu^{(1)}}+\beta_3}{n_2}\right)q^{(1)}(t_1)
$$
where $q^{(1)}$ is of degree~$1$. Hence, we can choose $t_1\in\br$ such that the above residue vanishes.
Recursively, we can choose $t_1,\dots,t_r\in\br$ such that $f_\beta$ is of
type $(n_1n_2,mn_2,mn_2+q)^-$ and 
$$
\res_{s=-\beta} I_{-}(f_{\beta},\beta_1^{(j)},\beta_2^{(j)},\beta_3^{(j)})(s)=0,\quad 1\leq j\leq r.
$$
This result does not depend on the particular choice of $(\beta_1^{(j)},\beta_2^{(j)})$.
\end{proof}


\def\cprime{$'$}
\providecommand{\bysame}{\leavevmode\hbox to3em{\hrulefill}\thinspace}
\providecommand{\MR}{\relax\ifhmode\unskip\space\fi MR }
\providecommand{\MRhref}[2]{%
  \href{http://www.ams.org/mathscinet-getitem?mr=#1}{#2}
}
\providecommand{\href}[2]{#2}

\end{document}

\end{document}